\theoremstyle{definition}
\newtheorem{definition}{Definition}
\theoremstyle{plain}
\newtheorem{theorem}{Theorem}
\theoremstyle{plain}
\newtheorem{lemma}{Lemma}
\theoremstyle{plain}
\theoremstyle{plain}
\newtheorem{corollary}{Corollary}
\theoremstyle{plain}
\title{{Random multi-block ADMM: an ALM based view for the QP case }}
\author[1]{S. Cipolla \thanks{\texttt{scipolla@ed.ac.uk}}}
\author[1]{J. Gondzio \thanks{\texttt{j.gondzio@ed.ac.uk}}}
\affil[1]{ \footnotesize{The University of Edinburgh, School of Mathematics}}
\def\keywords{\xdef\@thefnmark{}}
\begin{document}

\maketitle

\begin{abstract}
	Embedding randomization procedures in the Alternating Direction Method of Multipliers (ADMM) has recently attracted an increasing amount of interest as a remedy to the fact that the direct multi-block generalization of ADMM is not necessarily convergent. Even if, in practice, the introduction of such techniques could \textit{mitigate} the diverging behaviour of the multi-block extension of ADMM, from the theoretical point of view, it can ensure just the \textit{convergence in expectation}, which may not be a good indicator of its robustness and efficiency.  
	In this work, analysing the  strongly convex quadratic programming case,
	we interpret the block Gauss-Seidel sweep performed by the multi-block ADMM in the context of the inexact  Augmented Lagrangian Method. Using the proposed analysis, we are able to outline an alternative technique to those present in literature which, supported from stronger theoretical guarantees, is able to ensure the convergence of the multi-block generalization of the ADMM method. 
\end{abstract}	

{ \footnotesize
\noindent \keywords{\textbf{Keywords}: Alternating Direction Method of Multipliers, inexact  Augmented Lagrangian Method,  Randomly Shuffled Gauss-Seidel } \\
\keywords{\textbf{MSC2010 Subject Classification:} 90C25 - 65K05 - 65F10}%
}

\section{Introduction}
In this work we consider the solution of the problem:

\begin{align} \label{eq:QP_problem_ADMM}
\begin{aligned}
\min_{\mathbf{x} \in \mathbb{R}^d} \;& f(\mathbf{x}) :=\frac{1}{2}\mathbf{x}^TH\mathbf{x} +\mathbf{g}^T\mathbf{x}  \\ 
\hbox{s.t.} \; & A\mathbf{x} =\mathbf{b},
\end{aligned}
\end{align}

\noindent where $H \in \mathbb{R}^{d \times d}$ is Symmetric Positive Definite (SPD in short) and $A \in \mathbb{R}^{m \times d}$  ($d \geq m$) has full rank.

Recently, problem \eqref{eq:QP_problem_ADMM} has been widely used as a \textit{sample problem} for the convergence analysis of the $n$-block generalization of the Alternating Direction Method of Multipliers (ADMM) \cite{peng2012rasl,MR3439797,MR2765489,boyd2011distributed,eckstein2012augmented}. In particular, in \cite{MR3439797}, a counterexample in the form of problem \eqref{eq:QP_problem_ADMM} has been given to show that
the direct $n$-block extension of ADMM is not necessarily convergent when solving non-separable convex minimization problems. 
This counterexample has motivated a series of very recent works, including \cite{MR3590721,MR3121502,MR3592778, MR3740519,MR2968865,MR2983116,MR3056148,MR2968856,MR3372674,MR3374652,MR4123860,MR4066996,MR3904457,mihic2019managing,MR4174639},  where the authors analyse modifications of ADMM which ensure its convergence when $n \geq 3$.
In particular, in \cite{MR4066996,MR3904457,mihic2019managing} a series of randomization procedures  has been introduced which is  able to guarantee the convergence in expectation of the $n$-block generalization of ADMM. Since then such techniques have been  proposed as a possible remedy to the fact that the deterministic direct $n$-block extension of ADMM is not necessarily convergent.

The ADMM \cite{boyd2011distributed,eckstein2012augmented} was originally proposed in \cite{MR388811} and,
% and has recently gained in popularity for a broad spectrum of
%applications ranging from the constrained quadratic programming applied to partial differential equations  \cite{MR4152672} to emerging areas as deep-learning \cite{taylor2016training}. 
in its $n$-block version, it embeds a $n$-block Gauss-Seidel (GS)  decomposition \cite{gauss1903werke,bodewig2014matrix} into each iteration of the Augmented Lagrangian Method (ALM) \cite{MR271809,powell1969method}: the primal variables, partitioned into $n$ blocks, are cyclically updated and then a dual-ascent-type step for the dual variables is performed. 

Adopting a purely linear-algebraic approach, in the particular case of problem  \eqref{eq:QP_problem_ADMM},  ALM and ADMM can be simply interpreted in terms of matrix splitting techniques (see \cite{MR0158502,MR3495481}) for the solution of the corresponding Karush-Kuhn-Tucker (KKT) linear system (see Section \ref{sec:exactALM} and Section \ref{sec:ADMM_iALM}). 

Even if in the numerical linear algebra community the study of matrix splitting techniques for the solution of linear systems arising from saddle point problems is a well established line of research (see \cite[Sec. 8]{MR2168342} for an overview), this connection seems to be only partially exploited in  the works \cite{MR4066996,MR3904457,mihic2019managing} and,  despite the fact that analogies between ADMM and GS+ALM are apparent, to the best of our knowledge, {very few works perform a precise investigation in this direction (even in the simple case when the problem is given by equation \eqref{eq:QP_problem_ADMM}).} 
 
{Indeed,  even if it is natural to view ADMM as an approximate version of the ALM, as reported in \cite{eckstein2012augmented,MR3420805}, there were no known results in quantifying this interpretation until the very recent work \cite{MR4201710}: here the authors investigate the connection of the block symmetric Gauss–Seidel method \cite[Sec. 4.1.1]{MR3495481} with the inexact proximal ALM, which represents somehow a different setting from the one investigated here.}

Broadly speaking, {this work aims to depict a precise picture  of the synergies occurring between GS and ALM} in order to give rise to ADMM and, in turn, to shed new light on the hidden machinery which controls its convergence.	

{For the reasons explained above, our}  starting point is an analysis of the ALM from an inexact point of view and specifically   tailored for problem \eqref{eq:QP_problem_ADMM}. 
Indeed, inexact ALMs (iALM) have attracted the attention of many researchers in the last years and we refer to \cite[Sec. 1.4]{iALM} for a very recent literature review.  We mention explicitly the works \cite{MR3439811, MR3959087,MR3267149,MR3406681}, where iALM is analysed for solving linearly constrained convex programming problems, a very similar framework to the one analysed here. To the best of our knowledge, our approach does not have any evident analogy to the previously mentioned papers.

On the other hand,  the connections of the ALM  with {monotone operators/splitting methods} are well understood \cite{MR3467167,eckstein1989splitting} and,  our analysis, resembles this line of research more closely:  we use, in essence, a matrix splitting of the augmented  KKT matrix of \eqref{eq:QP_problem_ADMM} to represent the ALM/iALM iterations. It is not surprising that, as a result of this line of reasoning, we are able to relate the convergence of ALM/iALM (and their rate of convergence to an  $\varepsilon$~-~accurate primal-dual solution) to the spectral radius $\rho$ of the iteration map of a fixed point problem (see equation \eqref{eq:FIXEDiALM}).

It is important to highlight, at this stage, that encompassing inexactness in the recursion generated by a monotone operator has been extensively studied, see \cite{MR410483,MR1756912,MR1713951,MR3097289,MR1853949}.

A careful checking of the literature revealed some analogies of our approach with the inexact Uzawa's method  \cite{MR0108399}. Indeed  the ALM method can be interpreted as the Uzawa’s method applied to the augmented KKT system of problem \eqref{eq:QP_problem_ADMM}
and in the context of the inexact Uzawa's method, it is empirically well documented \cite{fortin2000augmented} and theoretically well understood \cite{MR1302679,MR1451114,MR1659105,MR1766854,MR1876634}, that a fixed number of Successive Over-Relaxation (SOR) \cite{MR46149,MR2938019} steps per inner solve (typically $10$) is needed in order to reproduce the convergence rate of the exact algorithm.  

%Surprisingly enough, our analysis allows to prove the convergence of the iALM method without any particular restriction on the parameter $\beta$ in \eqref{eq:QP_problem_ADMM} (see Theorem \ref{theo:ALMconvergence}) in contrast to the results obtained in the previously mentioned Uzawa's framework.

All the inexactness criteria developed in the  previously mentioned works are characterized by a \textit{summability condition} or a \textit{relative error} condition based on the residual previously computed.  

A first important by-product of our analysis, is that we are able to prove the convergence of the iALM without imposing any summability condition on the sequence $\{\eta^{k}\}_k$ which controls the \textit{amount of inexactness} of the iALM at $k$-th iteration (see Theorem~ \ref{theo:inexact_convergence}) also in the case when the source of inexactness is modelled using a random variable  (see Lemma~\ref{lem:epsol_iALM}). A second important advantage of our approach, is that we are able to give explicit bounds for the rate of convergence of the iALM in relation to the speed characterizing the convergence to zero of the sequence $\{\eta^k\}_{k}$. 

Beyond the previously mentioned advantages of our analysis, we trace the main contribution of this work  in the production  of an explicit  link between the accuracy required  to ensure the convergence and the specific solver used to address the minimization step in the ALM, which, in the case of problem \eqref{eq:QP_problem_ADMM}, is equivalent to the solution of a SPD linear system. Using explicit error-reduction bounds for the Conjugate Gradient (CG) method \cite{MR0060307,saig}, for the SOR method \cite{MR1280549} and its Randomly Shuffled version \cite{MR3621829},  we are able to prove that the inexactness criterion  $\eta^k = R^{k+1}$ ($R<1$ suitably user-defined), can be satisfied performing a constant number of iterations (see Theorem~\ref{theo:cgNONincreasingIts} and Theorem~\ref{theo:sorNONincreasingIts}). 
Moreover, observing that the GS decomposition is a particular case of the SOR decomposition, we are able to connect the very well known convergence issues \cite{peng2012rasl,MR2765489} of the direct $n$-block extension of ADMM (and its randomized versions \cite{MR4066996,MR3904457,mihic2019managing}) to the fact that one GS sweep for iALM-step may not be sufficient to ensure enough of the accuracy in the algorithm to deliver convergence.
Finally, as an interesting result of our analysis, we are able to propose a \textit{simple} numerical strategy aiming to mitigate, if not to eliminate entirely, the convergence issues of ADMM (see Section~\ref{sec:ADMM_iALM}): this proposal, due to its solid theoretical guarantees of convergence,  could be considered as a competitive alternative to the techniques introduced to date \cite{MR4066996,MR3904457,mihic2019managing}. We provide also computational evidence of this fact.

\subsubsection{Test Problems}  In order to showcase the developed theory, in the remainder of this work, we will consider the following test problems (all the numerical results presented are obtained using \texttt{Matlab\textsuperscript{\tiny\textregistered} R2020b}):\\
 
\textbf{Problem 1}  $H$ is the Kernel Matrix associated with the radial basis function for the data-set \texttt{heart\_scale} from \cite{CC01a} ($270$ instances, $13$ features). In particular, we consider $(H)_{ij}=e^{-\frac{\|\mathbf{x}_i-\mathbf{x}_j\|}{h^2}}$ with $h=0.5$ and $\mathbf{g}$ a random vector. For the constraints, we choose $A=\mathbf{e}^T$ where $\mathbf{e}$ is the vector of all ones and $\mathbf{b}=1$.  \\

\textbf{Problem 2} Following \cite{MR3439797},  we consider $H=h I_{3 \times 3}$ with $h=0.05$ and $\mathbf{g}$ a random vector. For the constraints we consider the matrix 
\begin{equation*}
	A=\begin{bmatrix}
	1 &  1 & 1 \\ 
	1 & 1 & 2 \\
	1 & 2 & 2
	\end{bmatrix}
\end{equation*}
and $\mathbf{b}$ a random vector ($rank(A)=3$).

\section{Augmented Lagrangian and KKT}
If we consider the Augmented Lagrangian   

\begin{equation*} 
\mathcal{L}_{\beta}(\mathbf{x}, \boldsymbol{\mu})=\frac{1}{2}\mathbf{x}^TH\mathbf{x} +\mathbf{g}^T\mathbf{x}-\boldsymbol{\mu}^T(A\mathbf{x}-\mathbf{b})+\frac{\beta}{2}\|A\mathbf{x}-\mathbf{b}\|^2,
\end{equation*}
the corresponding KKT conditions are
\begin{equation*} \label{eq:KKTconditions}
\begin{split}
&\nabla_{\mathbf{x}} \mathcal{L}_{\beta}(\mathbf{x},\boldsymbol{\mu})=\mathbf{H}\mathbf{x}+\mathbf{g}-A^T\boldsymbol{\mu}+\beta A^TA\mathbf{x}-\beta A^T\mathbf{b}=0  \\
& A\mathbf{x}-\mathbf{b}=0
\end{split}.
\end{equation*}
Multiplying by $\beta$  the second KKT condition, we  obtain the system

\begin{equation}\label{eq:KKTlS}
\underbrace{\begin{bmatrix}
	H_\beta & -A^T \\
	\beta A & {0}
	\end{bmatrix}}_{=:\mathcal{A}}\begin{bmatrix}
\mathbf{x} \\ \boldsymbol{\mu}
\end{bmatrix}=\underbrace{\begin{bmatrix}
\beta A^T \mathbf{b}-\mathbf{g} \\  \beta \mathbf{b}
\end{bmatrix}}_{=: \mathbf{q}}
\end{equation}
where $H_{\beta}:=H+\beta A^TA$. Theorem~\ref{theo:invertibility_A} states the existence of a unique solution of problem \eqref{eq:KKTlS}:

\begin{theorem} \label{theo:invertibility_A}
	The matrix $\mathcal{A}$ is invertible for all $\beta > 0$.
\end{theorem}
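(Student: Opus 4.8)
The plan is to prove that $\mathcal{A}$ has trivial kernel; since $\mathcal{A}$ is a square matrix, this is equivalent to invertibility. The first observation I would make is that $H_\beta = H + \beta A^T A$ is itself SPD for every $\beta > 0$: it is symmetric, and for $\mathbf{x} \neq 0$ we have $\mathbf{x}^T H_\beta \mathbf{x} = \mathbf{x}^T H \mathbf{x} + \beta \|A\mathbf{x}\|^2 \geq \mathbf{x}^T H \mathbf{x} > 0$. In particular $H_\beta$ is invertible, which is what makes the saddle-point structure usable.

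Next I would take an arbitrary pair $(\mathbf{x},\boldsymbol{\mu})$ with $\mathcal{A}(\mathbf{x}^T,\boldsymbol{\mu}^T)^T = 0$ and show it is zero. The second block row reads $\beta A\mathbf{x} = 0$, and since $\beta > 0$ this gives $A\mathbf{x} = 0$. Substituting into the first block row, $H_\beta \mathbf{x} = A^T\boldsymbol{\mu}$ reduces to $H\mathbf{x} + \beta A^T(A\mathbf{x}) = H\mathbf{x} = A^T\boldsymbol{\mu}$. Taking the inner product with $\mathbf{x}$ yields $\mathbf{x}^T H\mathbf{x} = (A\mathbf{x})^T\boldsymbol{\mu} = 0$, so $\mathbf{x} = 0$ by positive definiteness of $H$. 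Then $A^T\boldsymbol{\mu} = H\mathbf{x} = 0$, and because $A$ has full rank $m \leq d$, the matrix $A^T$ has full column rank, forcing $\boldsymbol{\mu} = 0$.

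As an equivalent and perhaps cleaner alternative, I would instead exhibit the block $LDU$ (Schur-complement) factorization
\begin{equation*}
\mathcal{A}=\begin{bmatrix} I & 0 \\ \beta A H_\beta^{-1} & I \end{bmatrix}\begin{bmatrix} H_\beta & 0 \\ 0 & \beta A H_\beta^{-1} A^T \end{bmatrix}\begin{bmatrix} I & -H_\beta^{-1}A^T \\ 0 & I \end{bmatrix},
\end{equation*}
so that $\det\mathcal{A} = \beta^{m}\det(H_\beta)\det(A H_\beta^{-1} A^T)$. The first factor is nonzero since $H_\beta$ is SPD, and the second is nonzero since $H_\beta^{-1}$ is SPD and $A$ has full row rank, which makes $A H_\beta^{-1} A^T$ SPD and hence invertible. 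Either way the conclusion holds for all $\beta>0$.

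I do not expect any genuine obstacle here: this is essentially the textbook fact that a $2\times 2$ block matrix with SPD leading block and full-rank off-diagonal constraint block is nonsingular. The only point that warrants a word of care is the asymmetry introduced by the factor $\beta$ in the $(2,1)$ block — but since $\beta>0$ this is merely a rescaling of the last $m$ rows and does not affect invertibility, as the determinant computation above makes explicit.
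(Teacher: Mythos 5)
Your proposal is correct, and your second (Schur-complement) argument is exactly the paper's proof: the paper factors $\mathcal{A}$ as the product of the block lower-triangular matrix $\bigl[\begin{smallmatrix} H_{\beta} & 0 \\ \beta A & \beta A H_{\beta}^{-1}A^T\end{smallmatrix}\bigr]$ and the unit upper-triangular factor $\bigl[\begin{smallmatrix} I & -H_{\beta}^{-1}A^T \\ 0 & I\end{smallmatrix}\bigr]$, which is your $LDU$ factorization with the first two factors merged, and concludes from the full rank of $A$ just as you do. Your first argument (showing the kernel is trivial) is a slightly more elementary alternative that never forms $H_\beta^{-1}$ and in fact only uses positive semidefiniteness of the regularization term plus positive definiteness of $H$; both routes are sound, and your observation that $A H_\beta^{-1}A^T$ is SPD because $A^T$ has full column rank is precisely the point the paper leaves implicit.
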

\begin{proof}
	Observe that
	
	\begin{equation*}
		\mathcal{A}= \begin{bmatrix}
		H_{\beta} & 0 \\
		\beta A & \beta A H_{\beta}^{-1} A^T
		\end{bmatrix}
		\begin{bmatrix}
		I & - H_{\beta}^{-1}A^T\\
		0 & I
		\end{bmatrix}.
	\end{equation*}
	The non-singularity follows using the fact that $A$ is of full rank. See also \cite[Sec. 3]{MR2168342} for different factorizations of saddle point matrices.
\end{proof}

Let us define:
\begin{definition}[$\varepsilon$~-~accurate primal-dual solution]
	We say that $[\mathbf{x},\boldsymbol{\mu}]^T$ is  an $\varepsilon$~-~accurate primal-dual solution for problem \eqref{eq:QP_problem_ADMM} if
	
	\begin{equation*}
	\| H\mathbf{x} + \mathbf{g}- A^T\boldsymbol{\mu} \| \leq \varepsilon \hbox{ and } \| A\mathbf{x}- \mathbf{b} \|\leq \varepsilon.
	\end{equation*}
	Moreover, if $[\mathbf{x}, \boldsymbol{\mu}]^T$ is a random variable, we say that it is  an expected $\varepsilon$~-~accurate primal-dual solution for problem \eqref{eq:QP_problem_ADMM} if
	
	\begin{equation*}
	\mathbb{E}(\| H\mathbf{x} + \mathbf{g}- A^T\boldsymbol{\mu}) \| \leq \varepsilon \hbox{ and } \mathbb{E}(\| A\mathbf{x}- \mathbf{b} \|)\leq \varepsilon.
	\end{equation*}
	
\end{definition}

\section{The Augmented Lagrangian Method of Multipliers (ALM)} \label{sec:exactALM}
The general form of ALM is given by

\begin{equation*} 
\left\{
\begin{array}{rc}
& \mathbf{x}^{k+1}=\min_{\mathbf{x} \in  \mathbf{R}^d} \mathcal{L}_{\beta}(\mathbf{x}, \boldsymbol{\mu}^k) \\
& \boldsymbol{\mu}^{k+1}= \boldsymbol{\mu}^{k}-\beta (A\mathbf{x}^{k+1}-\mathbf{b}),
\end{array}
\right.
\end{equation*}
which, for problem \eqref{eq:QP_problem_ADMM}, reads as
\begin{equation} \label{eq:ExactALM}
\left\{
\begin{array}{rc}
& \mathbf{x}^{k+1}=H_{\beta}^{-1}(A^T \boldsymbol{\mu}^k+\beta A^T \mathbf{b}-\mathbf{g}) \\
& \boldsymbol{\mu}^{k+1}= \boldsymbol{\mu}^{k}-\beta (A\mathbf{x}^{k+1}-\mathbf{b})
\end{array}
\right. .
\end{equation}

It is important to observe that the iterates $[\mathbf{x}^{k+1},\boldsymbol{\mu}^{k+1}]^T$ produced by \eqref{eq:ExactALM} are dual feasible, i.e.,

\begin{equation*}
0=\nabla_{\mathbf{x}} \mathcal{L}_{\beta}(\mathbf{x}^{k+1},\boldsymbol{\mu}^k)= \nabla_{\mathbf{x}} f(\mathbf{x}^{k+1})-A^T\boldsymbol{\mu}^{k+1}=H\mathbf{x}^{k+1}+\mathbf{g}-A^T\boldsymbol{\mu}^{k+1}.
\end{equation*}

It is well known that ALM  {can be derived applying the Proximal Point Method to the dual 
of problem \eqref{eq:QP_problem_ADMM}, see \cite[Sec. 6.1]{MR3467167}, but in this particular case can be also recast in  an operator splitting framework (see \cite[Sec. 7]{MR3467167}, \cite{eckstein1989splitting}):} indeed, the ALM scheme can be interpreted as a fixed point iteration obtained from a splitting {decomposition} for the KKT linear algebraic system \eqref{eq:KKTlS}  (see \cite{MR0158502,MR1829662} and \cite[Sec. 8]{MR2168342}). Writing 
$$\mathcal{A}= \begin{bmatrix}
H_{\beta} & 0 \\
\beta A & I \\
\end{bmatrix}-\begin{bmatrix}
0 & A^T \\
0 & I \\
\end{bmatrix}, $$
we can write equation \eqref{eq:ExactALM} as
\begin{equation*}
\begin{bmatrix}
\mathbf{x}^{k+1} \\
\boldsymbol{\mu}^{k+1}
\end{bmatrix}=\underbrace{\begin{bmatrix}
H_{\beta}^{-1} & 0 \\
-\beta A H_{\beta}^{-1} & I
\end{bmatrix}\begin{bmatrix}
0 & A^T \\
0 & I
\end{bmatrix}}_{=: \,G_\beta}\begin{bmatrix}
\mathbf{x}^{k} \\
\boldsymbol{\mu}^{k}
\end{bmatrix}+  \underbrace{\begin{bmatrix}
H_{\beta}^{-1} & 0 \\
-\beta A H_{\beta}^{-1} & I
\end{bmatrix}}_{=: F_\beta} \underbrace{\begin{bmatrix}
\beta A^T \mathbf{b}-\mathbf{g} \\  \beta \mathbf{b}
\end{bmatrix}}_{\mathbf{q}},
\end{equation*}
i.e., as a fixed point iteration of the form
\begin{equation*}
\begin{bmatrix}
\mathbf{x}^{k+1} \\
\boldsymbol{\mu}^{k+1}
\end{bmatrix}= G_{\beta}\begin{bmatrix}
\mathbf{x}^{k} \\
\boldsymbol{\mu}^{k}
\end{bmatrix}+ F_{\beta}\mathbf{q}.
\end{equation*}

The following Theorem~\ref{theo:G_eigs} (see \cite[Sec. 2]{MR2483050} for a similar result) is the cornerstone to prove the convergence of the ALM (see equation \eqref{eq:ExactALM}) and its inexact version (see equation \eqref{eq:IExactALM}).

\begin{theorem} \label{theo:G_eigs}
The eigenvalues of $G_{\beta}$ are s.t. $\lambda \in [0,1)$  for all $\beta>0$ and, moreover, $\rho(G_{\beta})\to 0$ for $\beta \to \infty$.
\end{theorem}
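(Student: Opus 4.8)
My plan is to pass to the explicit block form of the iteration matrix and reduce everything to a single positive‑definiteness estimate. Carrying out the product defining $G_\beta$ gives
$$G_\beta=\begin{bmatrix} 0 & H_\beta^{-1}A^T\\[2pt] 0 & I_m-\beta A H_\beta^{-1}A^T\end{bmatrix},$$
a block upper‑triangular matrix whose first block‑column vanishes. Hence $\det(G_\beta-\lambda I)=(-\lambda)^{d}\det\bigl((1-\lambda)I_m-\beta A H_\beta^{-1}A^T\bigr)$, so the spectrum of $G_\beta$ is $0$ (with algebraic multiplicity $d$) together with the numbers $1-\beta\sigma$, where $\sigma$ ranges over the eigenvalues of $T_\beta:=A H_\beta^{-1}A^T$. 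Since $A$ has full rank and $H_\beta^{-1}$ is SPD, $T_\beta$ is SPD, so its eigenvalues are real and positive; in particular every eigenvalue of $G_\beta$ is real and $<1$. Thus the whole statement $\lambda\in[0,1)$ reduces to showing $\beta\sigma\le 1$ for every eigenvalue $\sigma$ of $T_\beta$, i.e. $\beta\,T_\beta\preceq I_m$.

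The one genuine idea is the observation that $H_\beta-\beta A^TA=H\succ 0$, i.e. $\beta A^TA\prec H_\beta$ (strictly). Setting $B:=\sqrt{\beta}\,A$, this reads $B^TB\prec H_\beta$, equivalently $\|BH_\beta^{-1/2}\|_2<1$ for the spectral norm; since a matrix and its transpose share the same spectral norm, this is in turn equivalent to $BH_\beta^{-1}B^T\prec I_m$, that is $\beta A H_\beta^{-1}A^T\prec I_m$. Therefore every eigenvalue $\sigma$ of $T_\beta$ satisfies $0<\beta\sigma<1$, so $1-\beta\sigma\in(0,1)$, and combined with the $d$ null eigenvalues this yields $\lambda\in[0,1)$ for all eigenvalues $\lambda$ of $G_\beta$.

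For the asymptotics I would make the $\beta$‑dependence explicit via the Sherman--Morrison--Woodbury formula applied to $H_\beta^{-1}=(H+\beta A^TA)^{-1}$; a short computation gives $\beta A H_\beta^{-1}A^T=S\,(S+\beta^{-1}I_m)^{-1}$ with $S:=A H^{-1}A^T\succ 0$, whose eigenvalues are $\beta s_i/(1+\beta s_i)$ in terms of the eigenvalues $s_i>0$ of $S$. Hence the nonzero eigenvalues of $G_\beta$ are exactly $1/(1+\beta s_i)\in(0,1)$, and $\rho(G_\beta)=1/(1+\beta s_{\min})\to 0$ as $\beta\to\infty$. I expect the main obstacle to be precisely the lower bound $\lambda\ge 0$: the block‑triangular reduction, the Woodbury identity and the spectral‑norm equivalence $B^TB\prec I\Leftrightarrow BB^T\prec I$ are all routine, but recognizing that the augmentation term $\beta A^TA$ is dominated \emph{strictly} by $H_\beta$ — with the gap being exactly $H$ — is what makes $\beta\,T_\beta\prec I_m$, and hence nonnegativity of the spectrum, go through.
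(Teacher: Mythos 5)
Your proof is correct, and it takes a genuinely different route from the paper's. The paper argues directly on the eigenvector equations $A^T\mathbf{v}=\lambda H_\beta\mathbf{u}$, $(1-\lambda)\mathbf{v}=\lambda\beta A\mathbf{u}$: after excluding $\lambda=1$ via the invertibility of $\mathcal{A}$ (Theorem~\ref{theo:invertibility_A}), it derives the Rayleigh-quotient identity $\lambda=\frac{\mathbf{u}^TH\mathbf{u}}{\mathbf{u}^TH_\beta\mathbf{u}}$ for the nonzero eigenvalues, from which both $\lambda\in(0,1)$ and the limit $\rho(G_\beta)\to 0$ follow. You instead compute $G_\beta$ explicitly as a block upper-triangular matrix, reduce the spectrum to $\{0\}\cup\operatorname{spec}\bigl(I_m-\beta AH_\beta^{-1}A^T\bigr)$, and obtain $\beta AH_\beta^{-1}A^T\prec I_m$ from the strict domination $\beta A^TA\prec H_\beta$ (with gap exactly $H$) via the equivalence $\lambda_{\max}(M^TM)=\lambda_{\max}(MM^T)$; the Woodbury step then identifies the nonzero eigenvalues as exactly $1/(1+\beta s_i)$ with $s_i$ the eigenvalues of $AH^{-1}A^T$. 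Your version buys more: a closed-form spectrum and the explicit rate $\rho(G_\beta)=1/(1+\beta\lambda_{\min}(AH^{-1}A^T))=O(1/\beta)$, which sharpens the qualitative statement $\rho(G_\beta)\to 0$ and makes the trade-off against $k_2(H_\beta)$ discussed around Figure~\ref{fig:beta} quantitative. The paper's eigenvector argument is more elementary (no block determinants or Woodbury identity) and avoids forming $AH^{-1}A^T$, but yields only the Rayleigh-quotient characterization. Both arguments correctly use the full row rank of $A$ (yours to make $AH_\beta^{-1}A^T$ and $AH^{-1}A^T$ positive definite, the paper's implicitly through Theorem~\ref{theo:invertibility_A}), so there is no gap in either.
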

\begin{proof}
	Let us observe that $(\lambda,[\mathbf{u}, \mathbf{v}]^T)$ is an eigenpair of $G_\beta$ if and only if
	\begin{align} \label{eq:eigenpair_conditions}
	\begin{aligned}
	A^T \mathbf{v}= \lambda H_{\beta} \mathbf{u} \\
	(1-\lambda) \mathbf{v}= \lambda \beta A \mathbf{u}
	\end{aligned}.
	\end{align}
The proof is structured into three parts. \\ 

\underline{Part 1:} If $\lambda$ is an eigenvalue of $G_\beta$, then $\lambda \neq 1$.\\
By contradiction suppose that $\lambda = 1$, then from \eqref{eq:eigenpair_conditions} we have the condition
\begin{equation*}
\begin{bmatrix}
H_\beta & -A^T \\
\beta A &  {0}
\end{bmatrix} \begin{bmatrix}
\mathbf{u} \\ \mathbf{v}
\end{bmatrix}= {0},
\end{equation*}
which leads to an absurd since $\mathcal{A}$ is invertible for $\beta >0$ (see Theorem~\ref{theo:invertibility_A}). 

\underline{Part 2:} If $(\lambda,[\mathbf{u}, \mathbf{v}]^T)$ is an eigenpair  of $G_{\beta}$, then $\mathbf{u} \neq {0}$.\\
By contradiction, if $\mathbf{u}= {0}$, then from the second equation in \eqref{eq:eigenpair_conditions}, we obtain 
$(1-\lambda) \mathbf{v}= {0}$ and hence an absurd using Part 1.

\underline{Part 3.} \\
If $\mathbf{v}=0$, multiplying by $\mathbf{u}^T$ the first equation in  \eqref{eq:eigenpair_conditions}, we obtain $\lambda \mathbf{u}^TH_{\beta}\mathbf{u}=0$, which leads to $\lambda=0$ since $H_{\beta}$ is SPD.\\
If $\mathbf{v} \neq 0$, from \eqref{eq:eigenpair_conditions}, we obtain 

\begin{equation} \label{eq:eigen_eq}
	\lambda (1-\lambda) \frac{\mathbf{u}^TH_{\beta} \mathbf{u}}{\mathbf{u}^T\mathbf{u}}=\lambda \beta \frac{\mathbf{u}^TA^TA\mathbf{u}}{\mathbf{u}^T\mathbf{u}}.
\end{equation}
If in equation \eqref{eq:eigen_eq}  $\frac{\mathbf{u}^TA^TA\mathbf{u}}{\mathbf{u}^T\mathbf{u}}=0$, reasoning as before and using Part 1, we obtain $\lambda=0$. Instead, if in equation \eqref{eq:eigen_eq} we have  $\frac{\mathbf{u}^TA^TA\mathbf{u}}{\mathbf{u}^T\mathbf{u}}\neq 0$, we obtain $\lambda=0$ or $\lambda=\frac{\mathbf{u}^TH\mathbf{u}}{\mathbf{u}^TH_\beta\mathbf{u}}<1$, which completes the proof observing that, in this case, $\lambda=\frac{\mathbf{u}^TH\mathbf{u}}{\mathbf{u}^TH_\beta\mathbf{u}} \to 0$ if $\beta \to \infty$ since $\mathbf{u}^TA^TA\mathbf{u}\neq 0$.
\end{proof}

\begin{lemma} \label{lem:diagonalizability}
{The matrix $G_{\beta}$ is diagonalizable.}	
\end{lemma}
\begin{proof}
{
	Let us start observing that
	\begin{equation} \label{eq:G_beta_explicit}
		G_{\beta}=\begin{bmatrix}
		 0 & H_{\beta}^{-1}A^T \\
		 0 & I-\beta A H_{\beta}^{-1} A^T
		\end{bmatrix}.
	\end{equation}
The proof is divided into two parts. \\	
\underline{Part 1:} The matrix 	$I-\beta A H_{\beta}^{-1} A^T$ is invertible. \\
To prove this fact, it is enough to prove that $\beta A H_{\beta}^{-1} A^T$ does not have unitary eigenvalues. Using Woodbury formula and defining $C:=(I+\beta AH^{-1}A^T)^{-1}$, we have
\begin{equation*}
\begin{split}
& \beta A H_{\beta}^{-1} A^T = \beta AH^{-1}A^T(I-C\beta AH^{-1}A^T) \Rightarrow\\
&\beta A H_{\beta}^{-1} A^T\mathbf{x}=\mathbf{x} \Leftrightarrow \beta AH^{-1}A^TC\mathbf{x}=\mathbf{x}. 
\end{split} 
\end{equation*}
Thesis follows observing $\beta AH^{-1}A^TC$ and $C^{\frac{1}{2}}\beta AH^{-1}A^TC^{\frac{1}{2}} $ are similar 
and that $$\lambda(C^{\frac{1}{2}}\beta AH^{-1}A^TC^{\frac{1}{2}}) \subset (0,1).$$
\\
\underline{Part 2:} The minimal polynomial of $G_{\beta}$ factorizes in distinct linear factors. \\
The proof of this fact follows observing that the minimal polynomials of the blocks on the diagonal of $G_{\beta}$ factorize in distinct linear factors  since they are diagonalizable (see \cite[Cor 3.3.10]{MR2978290}). Moreover, since the matrix $I-\beta A H_{\beta}^{-1} A^T$ is invertible, they do not have common factors and hence their product (which coincides with the lowest common multiple ($lcm$)) is the minimal polynomial of the whole matrix. Indeed, for a general block upper triangular matrix $G$ with diagonal blocks $G_{ii}$, $i=1,\dots,n$, let us denote by $m_i(x)$ the minimal polynomials of the blocks and with $m(x)$ the minimal polynomial of the whole matrix. We have $lcm(m_i(x))|m(x)$ because $m(G_{ii})=0$. Moreover, by direct computation, one can check that, defining $s(x):=\prod_{i=1}^nm_i(x)$, it holds $s(G)=0$. If the polynomials $m_i(x)$ are pairwise relatively prime, then $s(x)=lcm(m_i(x))$ and hence $s(x)=m(x)$.

The diagonalizability of $G_{\beta}$ follows observing that, if the minimal polynomial of a given matrix factorizes in distinct linear factors, then it is diagonalizable (see, once more, \cite[Cor 3.3.10]{MR2978290}).
}
\end{proof}

\begin{lemma} \label{lem:Norm_radius_bound}
	{There exists a constant $M \equiv M(G_\beta) \geq 1$ s.t. $\|G_\beta^k\| \leq M  \rho(G_{\beta})^k$.}
\end{lemma}
\begin{proof}
Using Lemma \ref{lem:diagonalizability}, since $G_{\beta}$ is diagonalizable, we have $G_{\beta}^k=X\Lambda^kX^{-1}$, and hence 
\begin{equation}
		\|G_{\beta}^k\|\leq \underbrace{\|X\|\|X^{-1}\|}_{=:M}\|\Lambda^k\| \leq M \rho(G_\beta)^k. 
		\end{equation}
\end{proof}

\begin{definition}
	In the following, $[\overline{\mathbf{x}}, \overline{\boldsymbol{\mu}}]^T$ denotes  the unique solution of linear system \eqref{eq:KKTlS} (see Theorem~\ref{theo:invertibility_A} for existence and uniqueness).  Moreover, we define, $\rho_{\beta}:=\rho(G_{\beta}):= \max_{\lambda}\{|\lambda(G_{\beta})|\}$, $\mathbf{e}^k:=\begin{bmatrix}
	\mathbf{x}^{k}-\overline{\mathbf{x}} \\ \boldsymbol{\mu}^{k}-\overline{\boldsymbol{\mu}}
	\end{bmatrix}$, $\mathbf{d}^k:=\mathcal{A}\begin{bmatrix}
	\mathbf{x}^{k} \\ \boldsymbol{\mu}^{k}
	\end{bmatrix}- \mathbf{q}$.  
\end{definition}

\begin{theorem} \label{theo:ALMconvergence}
	The ALM in \eqref{eq:ExactALM} converges for all $\beta>0$. Moreover, we have for all $k \in \mathbb{N}$,
	
	\begin{equation*}
		\|\mathbf{e}^{k}\| \leq  \|\mathbf{e}^{0}\| {M}  \rho_{\beta}^{k}
	\end{equation*}
and

\begin{equation*}
	\|\mathbf{d}^{k}\| \leq \|\mathcal{A}\|\|\mathcal{A}^{-1}\|\|\mathbf{d}^0\| {M} \rho_{\beta}^k.
\end{equation*}
	
\end{theorem}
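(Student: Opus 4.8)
The plan is to exploit the fixed-point representation of ALM established just above the statement. First I would check that the exact solution $[\overline{\mathbf{x}},\overline{\boldsymbol{\mu}}]^T$ of \eqref{eq:KKTlS} is the unique fixed point of the map $\mathbf{z}\mapsto G_\beta\mathbf{z}+F_\beta\mathbf{q}$. Writing the underlying splitting as $\mathcal{A}=F_\beta^{-1}(I-G_\beta)$ (this is exactly the decomposition $\mathcal{A}=M-N$ with $M=F_\beta^{-1}$ and $N=MG_\beta$ used to obtain \eqref{eq:ExactALM}), the identity $\mathcal{A}[\overline{\mathbf{x}},\overline{\boldsymbol{\mu}}]^T=\mathbf{q}$ becomes $(I-G_\beta)[\overline{\mathbf{x}},\overline{\boldsymbol{\mu}}]^T=F_\beta\mathbf{q}$, i.e. $[\overline{\mathbf{x}},\overline{\boldsymbol{\mu}}]^T=G_\beta[\overline{\mathbf{x}},\overline{\boldsymbol{\mu}}]^T+F_\beta\mathbf{q}$, and uniqueness follows because $I-G_\beta$ is nonsingular by Part~1 of Theorem~\ref{theo:G_eigs} (equivalently by Theorem~\ref{theo:invertibility_A}). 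Subtracting this identity from the ALM recursion gives the error recursion $\mathbf{e}^{k+1}=G_\beta\mathbf{e}^{k}$, hence $\mathbf{e}^{k}=G_\beta^{k}\mathbf{e}^{0}$.

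From here convergence is immediate: by Theorem~\ref{theo:G_eigs} every eigenvalue of $G_\beta$ lies in $[0,1)$, so $\rho_\beta=\rho(G_\beta)<1$ and therefore $G_\beta^{k}\to 0$, giving $\mathbf{e}^{k}\to 0$ for every $\beta>0$. For the quantitative bound I would pass from $\mathbf{e}^{k}=G_\beta^{k}\mathbf{e}^{0}$ to $\|\mathbf{e}^{k}\|\le\|G_\beta^{k}\|\,\|\mathbf{e}^{0}\|$ and bound $\|G_\beta^{k}\|$ by $\rho_\beta^{k}$. The block-triangular shape $G_\beta=\left[\begin{smallmatrix}0 & H_\beta^{-1}A^T\\ 0 & I-\beta AH_\beta^{-1}A^T\end{smallmatrix}\right]$ is what makes this tractable: its nonzero eigenvalues coincide with those of the symmetric positive definite matrix $I-\beta AH_\beta^{-1}A^T$ (which, by a Woodbury identity, equals $(I+\beta AH^{-1}A^T)^{-1}$) while its $0$-eigenspace is the explicit subspace $\{(\mathbf{u},0)\}$; in particular $G_\beta$ is diagonalizable, which is what is needed to control $G_\beta^{k}$ in terms of $\rho_\beta^{k}$.

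The residual bound then reduces to the error bound. Since $[\overline{\mathbf{x}},\overline{\boldsymbol{\mu}}]^T$ solves \eqref{eq:KKTlS}, one has $\mathbf{d}^{k}=\mathcal{A}[\mathbf{x}^{k},\boldsymbol{\mu}^{k}]^T-\mathbf{q}=\mathcal{A}\mathbf{e}^{k}$, and in particular $\mathbf{e}^{0}=\mathcal{A}^{-1}\mathbf{d}^{0}$; combining with $\mathbf{e}^{k}=G_\beta^{k}\mathbf{e}^{0}$ yields $\mathbf{d}^{k}=\mathcal{A}G_\beta^{k}\mathcal{A}^{-1}\mathbf{d}^{0}$, whence $\|\mathbf{d}^{k}\|\le\|\mathcal{A}\|\,\|G_\beta^{k}\|\,\|\mathcal{A}^{-1}\|\,\|\mathbf{d}^{0}\|\le\|\mathcal{A}\|\,\|\mathcal{A}^{-1}\|\,\rho_\beta^{k}\,\|\mathbf{d}^{0}\|$, so that the condition number of $\mathcal{A}$ is the only extra factor incurred by measuring the residual rather than the error.

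The step I expect to be delicate is precisely the passage from the spectral radius $\rho(G_\beta)=\rho_\beta$ to a norm estimate of the form $\|G_\beta^{k}\|\le\rho_\beta^{k}$: $G_\beta$ is not normal, so $\|G_\beta^{k}\|$ and $\rho_\beta^{k}$ need not agree (already $\|G_\beta\|_2>\rho_\beta$ in elementary examples). I would handle it either by measuring the error in a norm adapted to the eigenstructure of $G_\beta$ — using the diagonalization described above, in which $G_\beta$ is a genuine contraction with factor $\rho_\beta$ — or, if one insists on the Euclidean norm, by tracking the resulting eigenvector-conditioning constant; once the error recursion $\mathbf{e}^{k}=G_\beta^{k}\mathbf{e}^{0}$ and the relation $\mathbf{d}^{k}=\mathcal{A}\mathbf{e}^{k}$ are in place, everything else is routine.
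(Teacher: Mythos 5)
Your proposal is correct and follows essentially the route the paper delegates to its references (Varga, Th.~1.10; Hackbusch, Th.~2.14): write ALM as the fixed-point iteration induced by the splitting $\mathcal{A}=F_\beta^{-1}-F_\beta^{-1}G_\beta$, derive $\mathbf{e}^{k}=G_\beta^{k}\mathbf{e}^{0}$ and $\mathbf{d}^{k}=\mathcal{A}G_\beta^{k}\mathcal{A}^{-1}\mathbf{d}^{0}$, and conclude from $\rho_\beta<1$ (Theorem~\ref{theo:G_eigs}). Your extra structural observations are sound: $G_\beta=\left[\begin{smallmatrix}0 & H_\beta^{-1}A^T\\ 0 & I-\beta AH_\beta^{-1}A^T\end{smallmatrix}\right]$, the Woodbury identity $I-\beta AH_\beta^{-1}A^T=(I+\beta AH^{-1}A^T)^{-1}$ holds, the kernel $\{(\mathbf{u},0)\}$ has dimension $d$ matching the algebraic multiplicity of $0$, so $G_\beta$ is diagonalizable. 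Your caveat about passing from $\rho_\beta$ to $\|G_\beta^{k}\|$ is well taken and is the one point the paper's citation-only proof glosses over: since $G_\beta^TG_\beta=\left[\begin{smallmatrix}0&0\\ 0& AH_\beta^{-2}A^T+C^2\end{smallmatrix}\right]$ with $C:=I-\beta AH_\beta^{-1}A^T$, one has $\|G_\beta\|_2>\rho_\beta$ generically, so the displayed non-asymptotic bounds are valid verbatim only in a norm adapted to the eigenbasis of $G_\beta$ (or with an additional eigenvector-conditioning constant in the Euclidean norm), exactly as you propose to handle it; with that reading, your argument fully establishes the statement.
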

\begin{proof}
{From direct computation, we have
\begin{equation*}
	\begin{split}
	& \mathbf{e}^{k}=G_{\beta}^k\mathbf{e}^0,\\
	&\mathbf{d}^k=\mathcal{A}G_{\beta}^k\mathcal{A}^{-1}\mathbf{d}^0,
	\end{split}
\end{equation*}
where we used $\mathcal{A}\mathbf{e}^k=\mathbf{d}^k$. Thesis follows passing to the norms and using Lemma \ref{lem:Norm_radius_bound}. 
}	
	
%\scC{Use Lemma \ref{lem:Norm_radius_bound} in \cite[Th. 1.10]{MR0158502} and in \cite[Th. 2.14]{MR3495481}}.
\end{proof} 

\begin{figure}[ht!]
	\centering
	\includegraphics[width=\textwidth]{./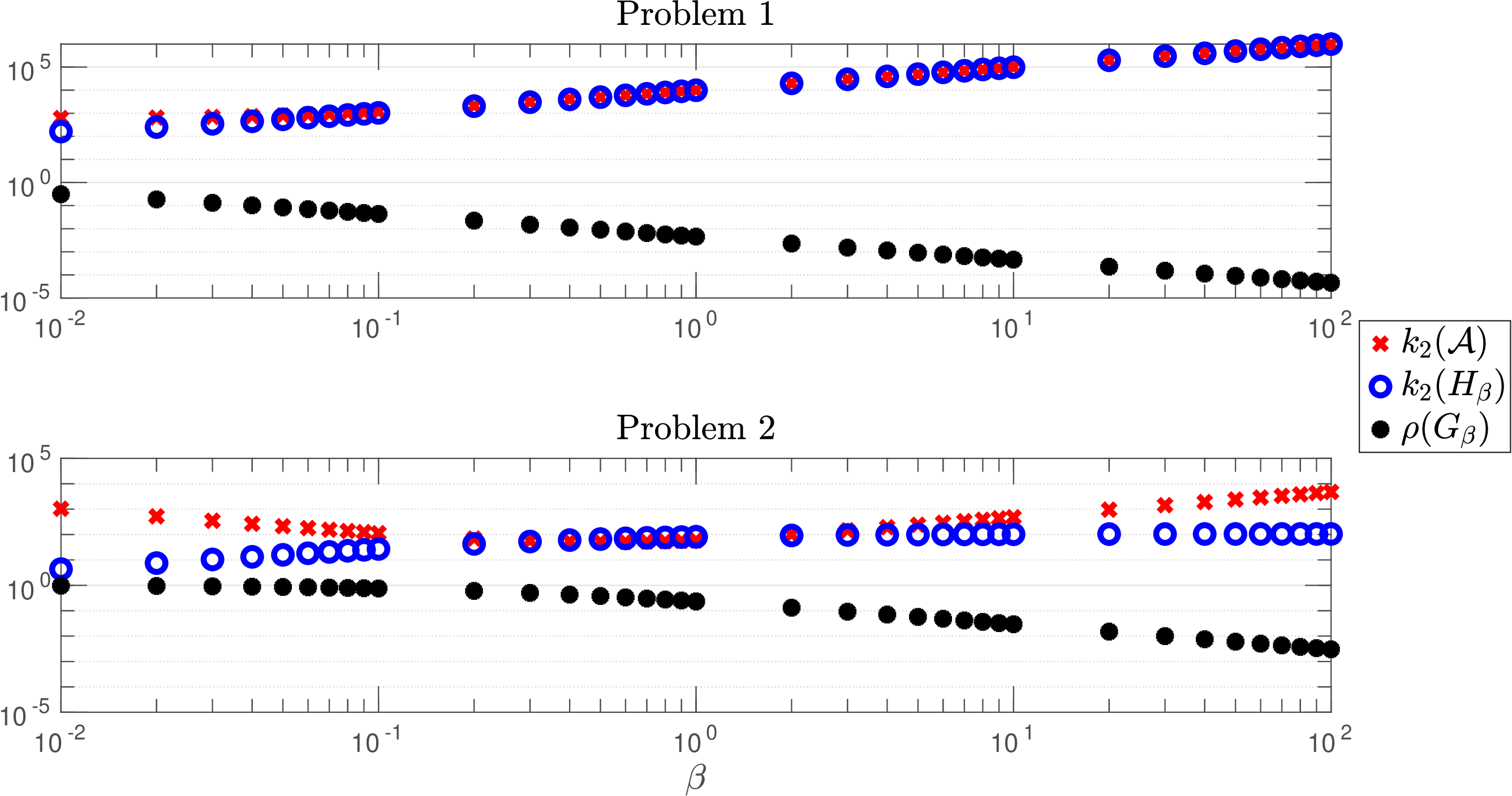}
	\caption{Behaviour of $k_2(\mathcal{A})$, $k_2(\mathcal{H}_{\beta})$, $\rho(G_{\beta})$ for different values of $\beta$ (logarithmic scale on both axes).} \label{fig:beta}
\end{figure}

In Figure~\ref{fig:beta} we report the behaviour of the condition number in $2$-norm of the matrices $\mathcal{A}$, $H_{\beta}$  (respectively $k_2(\mathcal{A})$, $k_2({H}_{\beta})$) and the spectral radius $\rho_{\beta}$ for different values of $\beta$. The results obtained in Figure~\ref{fig:beta} confirm the statement regarding $\rho_{\beta}$ in Theorem~\ref{theo:G_eigs}: the convergence of ALM can be consistently sped-up by increasing the value of $\beta$, see Theorem~\ref{theo:ALMconvergence}, but this speed-up could come at the cost of solving an increasingly ill-conditioned linear system involving $H_{\beta}$ (see the first equation in \eqref{eq:ExactALM}).
Indeed, when $\beta$ is large, the matrix $H_{\beta}$ is dominated by the term $\beta A^TA$ (see \cite[Sec. 8.1]{MR2168342} and references therein for more details) and, if $A^TA$ is singular, the condition number of the matrix $H_\beta$ progressively degrades when $\beta$ increases (see the behaviour of $k_2({H}_{\beta})$ for Problem 1 in the upper panel of Figure~\ref{fig:beta}).

The following Lemma~\ref{lem:eps_sol_compl} states the worst case complexity of ALM.

\begin{lemma} \label{lem:eps_sol_compl}
	The ALM in \eqref{eq:ExactALM} requires $O(\log_{\rho_\beta}\varepsilon)$ iterations to produce an $\varepsilon$~-~accurate primal-dual solution. 
\end{lemma}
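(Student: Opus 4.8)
The plan is to convert the geometric decay of the error and of the residual provided by Theorem~\ref{theo:ALMconvergence} into an iteration count for the two inequalities in the definition of an $\varepsilon$~-~accurate primal-dual solution. The quickest route exploits the observation recorded just below \eqref{eq:ExactALM}, namely that the ALM iterates are dual feasible, $H\mathbf{x}^{k+1}+\mathbf{g}-A^T\boldsymbol{\mu}^{k+1}=0$ for every $k\ge 0$. Consequently the first required inequality, $\|H\mathbf{x}^k+\mathbf{g}-A^T\boldsymbol{\mu}^k\|\le\varepsilon$, holds automatically for all $k\ge 1$, and it only remains to drive the primal infeasibility $\|A\mathbf{x}^k-\mathbf{b}\|$ below $\varepsilon$.

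For the latter I would use that $[\overline{\mathbf{x}},\overline{\boldsymbol{\mu}}]^T$ solves \eqref{eq:KKTlS}, whose second block row gives $A\overline{\mathbf{x}}=\mathbf{b}$ (as $\beta>0$); hence $A\mathbf{x}^k-\mathbf{b}=A(\mathbf{x}^k-\overline{\mathbf{x}})$ and, by Theorem~\ref{theo:ALMconvergence}, $\|A\mathbf{x}^k-\mathbf{b}\|\le\|A\|\,\|\mathbf{e}^k\|\le\|A\|\,\|\mathbf{e}^0\|\,\rho_\beta^{k}$. Imposing $\|A\|\,\|\mathbf{e}^0\|\,\rho_\beta^{k}\le\varepsilon$ and taking logarithms, using $\rho_\beta\in(0,1)$ from Theorem~\ref{theo:G_eigs} so that $\log\rho_\beta<0$, yields $k\ge \log_{\rho_\beta}\varepsilon-\log_{\rho_\beta}\bigl(\|A\|\,\|\mathbf{e}^0\|\bigr)$, so that $k=\lceil\log_{\rho_\beta}\varepsilon\rceil+O(1)$ iterations suffice; this is the claimed $O(\log_{\rho_\beta}\varepsilon)$ bound.

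If one prefers to avoid the dual-feasibility shortcut, the same conclusion follows directly from the residual estimate for $\mathbf{d}^k$: a short block computation shows that the full KKT residual $\mathbf{r}^k:=\begin{bmatrix}H\mathbf{x}^k+\mathbf{g}-A^T\boldsymbol{\mu}^k\\ A\mathbf{x}^k-\mathbf{b}\end{bmatrix}$ satisfies $\mathbf{d}^k=M\mathbf{r}^k$ with $M=\begin{bmatrix}I&\beta A^T\\ 0&\beta I\end{bmatrix}$ invertible, whence $\|\mathbf{r}^k\|\le\|M^{-1}\|\,\|\mathcal{A}\|\,\|\mathcal{A}^{-1}\|\,\|\mathbf{d}^0\|\,\rho_\beta^{k}$ and the identical logarithmic count bounds both components at once. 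I do not anticipate a genuine obstacle here: the only points requiring a little care are that dual feasibility is available only from the first iterate onwards (so one may shift the iteration index by one) and that the statement tacitly presumes $\rho_\beta\in(0,1)$ — in the degenerate case $\rho_\beta=0$ the method terminates in finitely many steps and the claim is trivial.
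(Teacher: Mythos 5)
Your proposal is correct and follows essentially the same route as the paper: dual feasibility disposes of the first inequality, and the geometric decay from Theorem~\ref{theo:ALMconvergence} handles the primal infeasibility. The only cosmetic difference is that you bound $\|A\mathbf{x}^k-\mathbf{b}\|$ by $\|A\|\,\|\mathbf{e}^k\|$ via the error estimate, whereas the paper uses $\|A\mathbf{x}^k-\mathbf{b}\|\le\tfrac{1}{\beta}\|\mathbf{d}^k\|$ via the residual estimate; both are immediate from the same theorem.
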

\begin{proof}
	Observe that we have  $$\|A\mathbf{x}^k-\mathbf{b} \| \leq \frac{1}{\beta}  \|\mathbf{d}^k\|\leq  \frac{1}{\beta} \|\mathcal{A}\|\|\mathcal{A}^{-1}\|\|\mathbf{d}^0\|{M}\rho_\beta^k,$$
	where in the last inequality we used Theorem~\ref{theo:ALMconvergence}. Since, as observed at the beginning of this section, the iterates $[\mathbf{x}^k,\boldsymbol{\mu}^k]^T$ produced by the ALM are dual feasible, we have
	$\|H\mathbf{x}^k + \mathbf{g} -A^T \boldsymbol{\mu}^k\|\equiv 0$. Hence, defining $\overline{C}:= \frac{1}{\beta} \|\mathcal{A}\|\|\mathcal{A}^{-1}\|\|\mathbf{d}^0\|{M}$, we obtain that  $ k \geq \log_{\rho_\beta}(\varepsilon/\overline{C}) $ iterations of the ALM are sufficient to deliver an $\varepsilon$~-~accurate primal-dual solution.
\end{proof}

In Figure~\ref{fig:exact_convergence}, we show the behaviour of the quantities involved in the proof of Lemma~\ref{lem:eps_sol_compl} (the legend is consistent with the notation used in Lemma~\ref{lem:eps_sol_compl} {except the fact that we report $\overline{C}\equiv \overline{C}/M$}). As Lemma~\ref{lem:eps_sol_compl} states and Figure~\ref{lem:eps_sol_compl} shows, the function $\overline{C}\rho_\beta^k$ is an upper bound for the quantity $\|A\mathbf{x}^k-\mathbf{b}\|$. In this example, in order to further highlight the dependence of $\rho_{\beta}$ on $\beta$, we choose different values of $\beta$ ($\beta=0.1$ and $\beta=5$ ) such that, for Problem 1 and Problem 2, we obtain $\rho_{\beta} \approx 0.05$. Let us point out that the results reported in Figure~\ref{fig:exact_convergence} are obtained solving the  linear system in \eqref{eq:ExactALM} using a high accuracy (a direct method using \texttt{Matlab}'s ``\texttt{backslash}'' operator) and, since the iterates must be dual feasible, the residuals $\|H\mathbf{x}^k + \mathbf{g} -A^T \boldsymbol{\mu}^k\|$ are close to the machine precision.
\begin{figure}[ht!]
	\centering
	\includegraphics[width=\textwidth]{./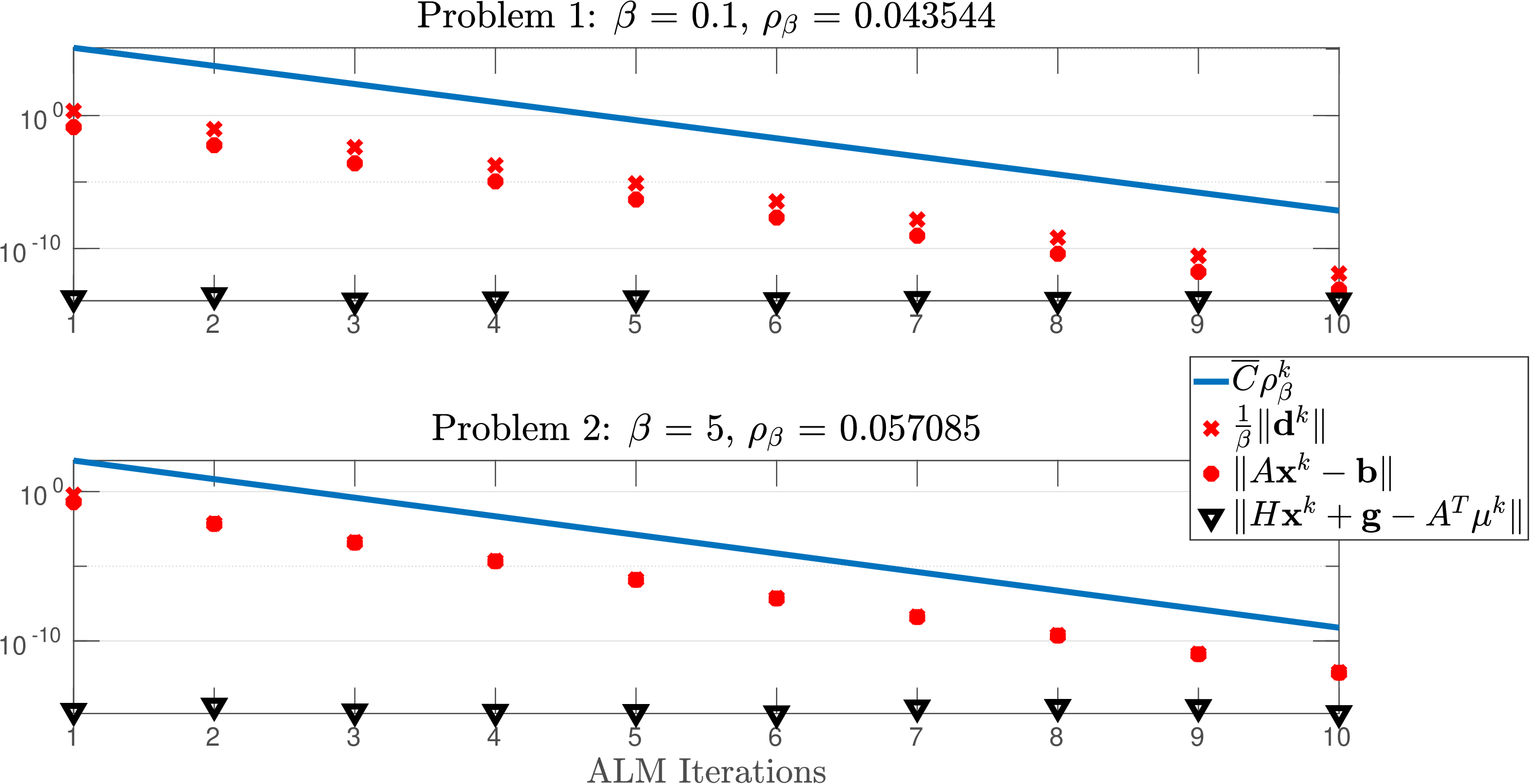}
	\caption{Behaviour of the quantities analysed in Lemma~\ref{lem:eps_sol_compl} (logarithmic scale on $y$-axis).} \label{fig:exact_convergence}
\end{figure}

\section{Inexact ALM (iALM)} \label{sec:iALM}
In this section we study in detail the iALM for problem \eqref{eq:QP_problem_ADMM}. The reader may see \cite[Sec. 1.4]{iALM} for a recent survey on this subject. In particular, we assume that the first equation in \eqref{eq:ExactALM} is not solved exactly, i.e., $\mathbf{x}^{k+1}$ is such that

\begin{equation} \label{eq:inexact_min_start}
	H_{\beta}\mathbf{x}^{k+1}-(A^T \boldsymbol{\mu}^k+\beta A^T \mathbf{b}-\mathbf{g})=\mathbf{r}^k.
\end{equation}  
In our framework, the iALM read as

\begin{equation} \label{eq:IExactALM}
\left\{
\begin{array}{rc}
& \mathbf{x}^{k+1}=H_{\beta}^{-1}(A^T \boldsymbol{\mu}^k+\beta A^T \mathbf{b}-\mathbf{g}+ \mathbf{r}^k) \\
& \boldsymbol{\mu}^{k+1}= \boldsymbol{\mu}^{k}-\beta (A\mathbf{x}^{k+1}-\mathbf{b} )
\end{array},
\right.
\end{equation}
and \eqref{eq:IExactALM} can be alternatively written as the following inexact fixed point iteration (see \cite{MR3367830} and \cite[Sec 12.2]{MR0273810} for more details on this topic): 

{\footnotesize
\begin{equation} \label{eq:FIXEDiALM}
\begin{bmatrix}
\mathbf{x}^{k+1} \\
\boldsymbol{\mu}^{k+1}
\end{bmatrix}=\underbrace{\begin{bmatrix}
	H_{\beta}^{-1} & 0 \\
	-\beta A H_{\beta}^{-1} & I
	\end{bmatrix}\begin{bmatrix}
	0 & A^T \\
	0 & I
	\end{bmatrix}}_{= \,G_\beta}\begin{bmatrix}
\mathbf{x}^{k} \\
\boldsymbol{\mu}^{k}
\end{bmatrix}+  \underbrace{\begin{bmatrix}
	H_{\beta}^{-1} & 0 \\
	-\beta A H_{\beta}^{-1} & I
	\end{bmatrix}}_{= F_\beta} \underbrace{\begin{bmatrix}
	\beta A^T \mathbf{b}-\mathbf{g}+ \mathbf{r}^k\\  \beta \mathbf{b}
	\end{bmatrix}}_{=:\mathbf{q}^k}.
\end{equation}
}
On the contrary of what was observed for the exact ALM (see the beginning of Section~\ref{sec:exactALM}), the iterates produced by \eqref{eq:IExactALM} are not dual feasible since
\begin{equation*}
0 \neq	\mathbf{r}^k=\nabla_{\mathbf{x}} \mathcal{L}_{\beta}(\mathbf{x}^{k+1},\boldsymbol{\mu}^k)=H\mathbf{x}^{k+1}+\mathbf{g}- A^T\boldsymbol{\mu}^{k+1},
\end{equation*}
i.e., the error $\mathbf{r}^k$  introduced in the solution of the first equation in \eqref{eq:IExactALM} can be interpreted as a measure of the violation of the dual feasibility condition.

In Section~\ref{sec:RSSOR&iALM} we will consider the point $\mathbf{x}^{k+1}$ in \eqref{eq:inexact_min_start} as a result of a \textit{randomized} procedure and, for this reason, we are going to present this section assuming that  $\{\mathbf{r}^k\}_k$ in \eqref{eq:FIXEDiALM} is a sequence of random variables  (and hence all the generated $\{
[\mathbf{x}^k, \boldsymbol{\mu}^k]^T\}_k$ are random variables). Moreover, all the results presented here  can be easily restated in a \textit{deterministic} framework substituting the ``\textit{almost sure (a.s.) convergence}'' with ``\textit{convergence}'' and not considering the ``\textit{expectation operator}''. For a review of the probabilistic concepts we use in the following see \cite[Ch. 2]{MR0455094}. 
%Before continuing, let us observe that the randomized framework used shares some ideas from \cite{MR3513271,loizou2019convergence,MR3179953,MR2970257,MR3432148}.

The following Theorem~\ref{theo:inexact_convergence} addresses the convergence of the iALM using the inexact fixed point formulation in \eqref{eq:FIXEDiALM}. 

\begin{theorem} \label{theo:inexact_convergence}
	
	 Let $\beta > 0$. If $\lim_{j \to \infty} \|\mathbf{r}^j\|=0$ a.s., then the iALM in \eqref{eq:IExactALM} converges a.s. to the solution of the linear system \eqref{eq:KKTlS} and the following inequalities hold a.s. for every $k \in \mathbb{N}$:
	
	\begin{equation*} 
	\|\mathbf{e}^k\|\leq {M} \rho_\beta^k\|\mathbf{e}^0\|+{M}\|F_{\beta}\|\sum_{j=0}^{k-1}\rho_\beta^{k-1-j}\|\mathbf{r}^{j}\|,
	\end{equation*}
	
	\begin{equation}\label{eq:inexact_residual}
	\|\mathbf{d}^{k}\| \leq \|\mathcal{A}\|\|\mathcal{A}\|^{-1} \|\mathbf{d}^0\|{M} \rho_\beta^k+{M}\|\mathcal{A}\|\|F_{\beta}\|\sum_{j=0}^{k-1}\rho_\beta^{k-1-j}\|\mathbf{r}^{j}\|.
	\end{equation}
	
\end{theorem}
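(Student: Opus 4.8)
The plan is to turn the inexact recursion \eqref{eq:FIXEDiALM} into an explicit linear recursion for the error $\mathbf{e}^k$ driven by the inexactness terms $\mathbf{r}^j$, unroll it, and take norms. First I would note that $\mathbf{q}^k = \mathbf{q} + \begin{bmatrix}\mathbf{r}^k \\ \mathbf{0}\end{bmatrix}$ and that $[\overline{\mathbf{x}}, \overline{\boldsymbol{\mu}}]^T$, being the solution of \eqref{eq:KKTlS}, is a fixed point of the exact iteration, i.e. $[\overline{\mathbf{x}}, \overline{\boldsymbol{\mu}}]^T = G_\beta [\overline{\mathbf{x}}, \overline{\boldsymbol{\mu}}]^T + F_\beta \mathbf{q}$. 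Subtracting this identity from \eqref{eq:FIXEDiALM} gives the pathwise recursion $\mathbf{e}^{k+1} = G_\beta \mathbf{e}^k + F_\beta \begin{bmatrix}\mathbf{r}^k \\ \mathbf{0}\end{bmatrix}$, and iterating it yields $\mathbf{e}^k = G_\beta^k \mathbf{e}^0 + \sum_{j=0}^{k-1} G_\beta^{k-1-j} F_\beta \begin{bmatrix}\mathbf{r}^j \\ \mathbf{0}\end{bmatrix}$. Taking norms, using sub-multiplicativity together with $\big\| \begin{bmatrix}\mathbf{r}^j \\ \mathbf{0}\end{bmatrix} \big\| = \|\mathbf{r}^j\|$, and invoking the bound from Theorem~\ref{theo:ALMconvergence} — which, applied to the homogeneous iteration started from an arbitrary error, says precisely $\|G_\beta^k \mathbf{v}\| \le \rho_\beta^k \|\mathbf{v}\|$ for every $\mathbf{v}$, hence $\|G_\beta^k\| \le \rho_\beta^k$ — produces the first claimed inequality.

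For the residual inequality I would use that $\mathbf{d}^k = \mathcal{A}[\mathbf{x}^k, \boldsymbol{\mu}^k]^T - \mathbf{q} = \mathcal{A}\, \mathbf{e}^k$ (since $\mathcal{A}[\overline{\mathbf{x}}, \overline{\boldsymbol{\mu}}]^T = \mathbf{q}$), so $\|\mathbf{d}^k\| \le \|\mathcal{A}\|\,\|\mathbf{e}^k\|$, and also $\mathbf{e}^0 = \mathcal{A}^{-1}\mathbf{d}^0$, hence $\|\mathbf{e}^0\| \le \|\mathcal{A}^{-1}\|\,\|\mathbf{d}^0\|$. Substituting both estimates into the first inequality gives \eqref{eq:inexact_residual}.

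It then remains to prove a.s. convergence. I would work on the probability-one event on which $\|\mathbf{r}^j\| \to 0$; there everything is deterministic. The term $\rho_\beta^k \|\mathbf{e}^0\|$ vanishes because $\rho_\beta < 1$ by Theorem~\ref{theo:G_eigs}. For the driving term, given $\epsilon > 0$ choose $N$ with $\|\mathbf{r}^j\| < \epsilon$ for all $j \ge N$ and split $\sum_{j=0}^{k-1} \rho_\beta^{k-1-j} \|\mathbf{r}^j\|$ at $j = N$: the head equals $\rho_\beta^{k-N} \sum_{j=0}^{N-1} \rho_\beta^{N-1-j} \|\mathbf{r}^j\|$, which tends to $0$ as $k \to \infty$ for fixed $N$, while the tail is bounded by $\epsilon \sum_{i \ge 0} \rho_\beta^i = \epsilon/(1-\rho_\beta)$; letting $k \to \infty$ and then $\epsilon \to 0$ shows the sum tends to $0$. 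Consequently $\|\mathbf{e}^k\| \to 0$ and $\|\mathbf{d}^k\| \to 0$ a.s., which is the asserted convergence to the solution of \eqref{eq:KKTlS}. This last step is a discrete Toeplitz-type argument: convolution of a null sequence with the summable geometric kernel $\{\rho_\beta^i\}$.

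The only genuinely delicate point is the uniform bound $\|G_\beta^k\| \le \rho_\beta^k$ in the Euclidean norm: for a generic matrix with spectral radius below one one only gets $\|G_\beta^k\|^{1/k} \to \rho_\beta$, not this clean geometric decay, so the argument leans essentially on Theorem~\ref{theo:ALMconvergence} (equivalently on the references cited there); everything else is bookkeeping. I would also point out, as the authors emphasise, that no summability of $\{\|\mathbf{r}^j\|\}$ is used anywhere — mere convergence to zero suffices — and that the whole derivation is pathwise, so the probabilistic wrapper contributes nothing beyond restricting to the a.s. event.
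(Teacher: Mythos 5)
Your proposal is correct and follows essentially the same route as the paper: subtract the fixed-point identity for $[\overline{\mathbf{x}},\overline{\boldsymbol{\mu}}]^T$ from \eqref{eq:FIXEDiALM}, obtain the pathwise recursion $\mathbf{e}^{k}=G_\beta\mathbf{e}^{k-1}+F_\beta[\mathbf{r}^{k-1},\mathbf{0}]^T$, unroll, bound via $\rho_\beta$ (leaning, exactly as the paper implicitly does, on the norm bound of Theorem~\ref{theo:ALMconvergence}), and pass to $\mathbf{d}^k=\mathcal{A}\mathbf{e}^k$. The only cosmetic differences are that you unroll before taking norms rather than after, and that you write out the Toeplitz-type convolution argument which the paper simply cites.
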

\begin{proof}
	If $[\overline{\mathbf{x}}, \overline{\boldsymbol{\mu}}]^T$ is a solution of \eqref{eq:KKTlS}, then it satisfies the fixed point equation
	
	\begin{equation} \label{eq:FIXEPointsolution}
	\begin{bmatrix}
	\overline {\mathbf{x}} \\
	\overline {\boldsymbol{\mu}}
	\end{bmatrix}={\begin{bmatrix}
		H_{\beta}^{-1} & 0 \\
		-\beta A H_{\beta}^{-1} & I
		\end{bmatrix}\begin{bmatrix}
		0 & A^T \\
		0 & I
		\end{bmatrix}}\begin{bmatrix}
	\overline {\mathbf{x}} \\
	\overline {\boldsymbol{\mu}}
	\end{bmatrix}+  {\begin{bmatrix}
		H_{\beta}^{-1} & 0 \\
		-\beta A H_{\beta}^{-1} & I
		\end{bmatrix}} {\begin{bmatrix}
		\beta A^T \mathbf{b}-\mathbf{g} \\  \beta \mathbf{b}
		\end{bmatrix}}.
	\end{equation}
Subtracting \eqref{eq:FIXEPointsolution} from \eqref{eq:FIXEDiALM}
we obtain
\begin{equation*}
	\mathbf{e}^k=G_{\beta}\mathbf{e}^{k-1}+F_{\beta}\begin{bmatrix}
	\mathbf{r}^{k-1} \\ 0
	\end{bmatrix} \hbox{ a.s. }
\end{equation*}
and hence
\begin{equation} \label{eq:error_inexact}
{
	\mathbf{e}^k=G^k_{\beta}\mathbf{e}^{0}+\sum_{j=0}^{k-1}G_{\beta}^{k-1-j}F_{\beta}\begin{bmatrix}
	\mathbf{r}^{j} \\ 0
	\end{bmatrix} \hbox{ a.s. }.}
\end{equation}

{Passing to the norms in \eqref{eq:error_inexact} and using Lemma \ref{lem:Norm_radius_bound}}, we have
\begin{equation} \label{eq:inexact_series_convergence} 
\|\mathbf{e}^k\|\leq \rho_\beta^k {M}\|\mathbf{e}^0\|+{M}\|F_{\beta}\|\sum_{j=0}^{k-1}\rho_\beta^{k-1-j}\|\mathbf{r}^{j}\| \hbox{ a.s..} 
\end{equation}

The a.s. convergence to zero of $\{ \|\mathbf{e}^k \|\}_k$ follows from \eqref{eq:inexact_series_convergence} observing that, if $\lim_{k \to \infty} \|\mathbf{r}^k\|=0$ a.s., 
then  $$\lim_{k \to \infty}  \sum_{j=0}^{k-1}\rho_\beta^{k-1-j}\|\mathbf{r}^{j}\|=0 \hbox{ a.s. }$$ (this is a particular case of the Toeplitz Lemma, see \cite[Exercise 12.2-3]{MR0273810} for the deterministic case, \cite{MR3574725} and references therein for the probabilistic case). The second part of the statement follows observing that
\begin{equation*}
	\|\mathbf{d}^k\|=\|\mathcal{A}\mathbf{e}^k\| \leq \|\mathcal{A}\|\|\mathbf{e}^k\| \hbox{ a.s. }
\end{equation*}
and that $\|\mathbf{e}^0\|\leq \|\mathcal{A}\|^{-1} \|\mathbf{d}^0\|$.
\end{proof}

\begin{lemma} \label{lem:epsol_iALM}
 Suppose $\mathbb{E}(\|\mathbf{r}^j\|) \leq R^{j+1}$ for all $j \in \mathbb{N}$ and $R<1$. Then the iterates of the iALM in \eqref{eq:IExactALM} converge a.s. to the solution of the linear system \eqref{eq:KKTlS}. Moreover, if $R<\rho_{\beta}$, then $O(\log_{\rho_\beta}\varepsilon)$ iterations are sufficient to produce an expected $\varepsilon$~-~accurate primal-dual solution; else, if  $\rho_{\beta} \leq R$, then $O(\log_{R}\varepsilon)$ iterations are sufficient (given that $\varepsilon$ is sufficiently small).
\end{lemma}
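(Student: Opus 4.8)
The plan is to feed the geometric-decay hypothesis $\mathbb{E}(\|\mathbf{r}^j\|)\le R^{j+1}$ into the almost-sure bounds of Theorem~\ref{theo:inexact_convergence}; the only quantity that really needs work is the convolution sum $\sum_{j=0}^{k-1}\rho_\beta^{k-1-j}\|\mathbf{r}^j\|$, which I would control in expectation by separating the three regimes $R<\rho_\beta$, $R=\rho_\beta$ and $R>\rho_\beta$.

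First, for the almost-sure convergence: since $\sum_{j\ge 0}\mathbb{E}(\|\mathbf{r}^j\|)\le\sum_{j\ge 0}R^{j+1}=R/(1-R)<\infty$, Tonelli's theorem gives $\mathbb{E}\big(\sum_{j\ge 0}\|\mathbf{r}^j\|\big)<\infty$, hence $\sum_{j\ge 0}\|\mathbf{r}^j\|<\infty$ almost surely, and in particular $\|\mathbf{r}^j\|\to 0$ a.s. Theorem~\ref{theo:inexact_convergence} then yields the a.s. convergence of the iALM iterates to the solution of \eqref{eq:KKTlS}.

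Next, for the complexity estimates: I would start from the identity noted in Section~\ref{sec:iALM}, $H\mathbf{x}^k+\mathbf{g}-A^T\boldsymbol{\mu}^k=\mathbf{r}^{k-1}$, which gives $\mathbb{E}(\|H\mathbf{x}^k+\mathbf{g}-A^T\boldsymbol{\mu}^k\|)=\mathbb{E}(\|\mathbf{r}^{k-1}\|)\le R^{k}$, so the dual residual drops below $\varepsilon$ once $k\ge\log_R\varepsilon$. For the primal residual I would use $\|A\mathbf{x}^k-\mathbf{b}\|\le\beta^{-1}\|\mathbf{d}^k\|$ (as in the proof of Lemma~\ref{lem:eps_sol_compl}), take expectations in \eqref{eq:inexact_residual} (with the starting point $[\mathbf{x}^0,\boldsymbol{\mu}^0]^T$ deterministic), and insert the elementary bound
\begin{equation*}
\sum_{j=0}^{k-1}\rho_\beta^{k-1-j}R^{j+1}=R\,\rho_\beta^{k-1}\sum_{j=0}^{k-1}(R/\rho_\beta)^{j}\le
\begin{cases}
\dfrac{R}{\rho_\beta-R}\,\rho_\beta^{k} & \text{if } R<\rho_\beta,\\
k\,\rho_\beta^{k} & \text{if } R=\rho_\beta,\\
\dfrac{R^{k+1}}{R-\rho_\beta} & \text{if } R>\rho_\beta.
\end{cases}
\end{equation*}
When $R<\rho_\beta$ this makes $\mathbb{E}(\|\mathbf{d}^k\|)\le C\rho_\beta^{k}$ for an explicit constant $C$, so $\mathbb{E}(\|A\mathbf{x}^k-\mathbf{b}\|)\le\varepsilon$ once $k=O(\log_{\rho_\beta}\varepsilon)$; since $\log_R\varepsilon\le\log_{\rho_\beta}\varepsilon$ in this regime, the dual-residual requirement is automatically met, giving the claimed $O(\log_{\rho_\beta}\varepsilon)$. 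When $\rho_\beta\le R$ both terms of \eqref{eq:inexact_residual} are $O\big((1+k)R^{k}\big)$, the extra factor $k$ surviving only in the borderline case $\rho_\beta=R$; using $(1+k)R^{k}\le C_\gamma R^{(1-\gamma)k}$ for a fixed $\gamma\in(0,1)$ and a suitable $C_\gamma$, one obtains $\mathbb{E}(\|A\mathbf{x}^k-\mathbf{b}\|)\le\varepsilon$ for $k=O(\log_R\varepsilon)$, which also dominates the dual-residual condition; the hypothesis that $\varepsilon$ be sufficiently small is what lets the leading term $\log_R\varepsilon$ absorb the additive (and, when $\rho_\beta=R$, logarithmic) corrections.

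The only genuinely delicate point I anticipate is the borderline case $\rho_\beta=R$, where the convolution sum degrades from a clean geometric estimate to $k\rho_\beta^{k}$; one must check that this polynomial-times-geometric factor still yields an $O(\log_R\varepsilon)$ iteration count, and that is exactly where the ``$\varepsilon$ sufficiently small'' caveat is needed. Everything else is routine bookkeeping with geometric series on top of the already-established bound \eqref{eq:inexact_residual}.
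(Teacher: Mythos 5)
Your proposal is correct and follows essentially the same route as the paper: substitute the hypothesis into the bound \eqref{eq:inexact_residual} of Theorem~\ref{theo:inexact_convergence}, take expectations, and control the convolution sum $\sum_{j=0}^{k-1}\rho_\beta^{k-1-j}R^{j+1}$ by a case split on $R$ versus $\rho_\beta$, with the summability of $\mathbb{E}(\|\mathbf{r}^j\|)$ giving the a.s.\ convergence. The one small divergence is that you handle the dual residual via the exact identity $H\mathbf{x}^k+\mathbf{g}-A^T\boldsymbol{\mu}^k=\mathbf{r}^{k-1}$, giving $\mathbb{E}(\|H\mathbf{x}^k+\mathbf{g}-A^T\boldsymbol{\mu}^k\|)\leq R^k$ directly, whereas the paper bounds it by $(1+\|A^T\|)\,\mathbb{E}(\|\mathbf{d}^k\|)$ through a triangle-inequality argument; your version is slightly cleaner and your finer three-way split of the convolution sum is marginally sharper in the strict case $R>\rho_\beta$, but neither change alters the substance of the argument.
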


\begin{proof}
	If $\mathbb{E}(\|\mathbf{r}^j\|) \leq R^{j+1}$ for all $j \in \mathbb{N}$,  then $\sum_{j=0}^{\infty} \mathbb{E}(\|\mathbf{r}^j\|) < \infty $ and hence, using \cite[Th. 2.1.3]{MR0455094}, we have $\lim_{j \to \infty} \|\mathbf{r}^j\|=0$ a.s.. Using now Theorem~\ref{theo:inexact_convergence}, we have that $\|\mathbf{d}^k\|$ converges a.s. to zero.
	
    Using equation  \eqref{eq:inexact_residual} and the hypothesis $\mathbb{E}(\|\mathbf{r}^j\|) \leq R^{j+1}$, we have
	
	\begin{equation} \label{eq:series_dk_upper}
	\mathbb{E}(\|\mathbf{d}^{k}\|) \leq \|\mathcal{A}\|\|\mathcal{A}\|^{-1}\|\mathbf{d}^0\|{M}\rho_\beta^k+ {M}\|\mathcal{A}\|\|F_{\beta}\|\sum_{j=0}^{k-1}\rho_\beta^{k-1-j}R^{j+1}.
	\end{equation}

	Let us observe, moreover, that 
	$$\mathbb{E}(|\|H\mathbf{x}^k + \mathbf{g} -A^T \boldsymbol{\mu}^k\| -\|\beta A^T(A\mathbf{x}^k- \mathbf{b})\| |) \leq \mathbb{E}(\|H_{\beta}\mathbf{x}^k-A^T \boldsymbol{\mu}^k +\mathbf{g} -\beta A^T \mathbf{b}  \|)\leq \mathbb{E}(\|\mathbf{d}^k\|), $$
	and hence
	
	\begin{equation} \label{eq:dual_dk_upper}
	\mathbb{E}(\|H\mathbf{x}^k + \mathbf{g} -A^T \boldsymbol{\mu}^k\|) \leq \mathbb{E}(\|\mathbf{d}^k\|)+\|A^T\|\mathbb{E}(\|\mathbf{d}^k\|) \leq C_1 \mathbb{E}(\|\mathbf{d}^k\|),
	\end{equation}
	where we defined $C_1:=(1+\|A^T\|)$ and used the fact that $\|\beta(A\mathbf{x}^k-\mathbf{b})\|\leq\|\mathbf{d}^k\|$ a.s..

	\underline{Case $R<\rho_{\beta}$.} Using \eqref{eq:series_dk_upper}, we have
	
	$$\mathbb{E}(\|\mathbf{d}^k\|) \leq \|\mathcal{A}\|\|\mathcal{A}\|^{-1} \|\mathbf{d}^0\|{M} \rho_\beta^k+ \rho_\beta^{k}{M}\|\mathcal{A}\|\|F_{\beta}\|\frac{R}{\rho_\beta}\sum_{j=0}^{k-1}(\frac{R}{\rho_\beta})^{j} \leq C_2\rho_\beta^{k},$$
	where $C_2:=\max\{{M}\|\mathcal{A}\|\|\mathcal{A}\|^{-1} \|\mathbf{d}^0\|, {M}\frac{\frac{R}{\rho_\beta}\|\mathcal{A}\|\|F_\beta\|}{1-\frac{R}{\rho_\beta}} \}$.
	
	Moreover, using  the above inequality, we have also
	
	$$\mathbb{E}(\|A\mathbf{x}^k-\mathbf{b} \|) \leq \frac{1}{\beta} \mathbb{E}(\|\mathbf{d}^k\|)\leq  \frac{1}{\beta} C_2\rho_\beta^k,$$
	and hence, using \eqref{eq:dual_dk_upper} and defining $\overline{C}:=\max\{C_1C_2,\frac{1}{\beta}C_2\}$,  we obtain that $k \geq \log_{\rho_\beta}(\varepsilon/\overline{C}) $ iterations of iALM are sufficient to produce an expected  $\varepsilon$~-~accurate primal-dual solution.
	
	\underline{Case $\rho_{\beta} \leq R$.} Using \eqref{eq:series_dk_upper}, we have
	
	$$\mathbb{E}(\|\mathbf{d}^k\|) \leq \|\mathcal{A}\|\|\mathcal{A}^{-1}\|\|\mathbf{d}^0\|{M} R^k+ R^{k}k{M}\|\mathcal{A}\|\|F_{\beta}\| \leq C_2R^{k}k,$$
	where $C_2:=\max\{{M}\|\mathcal{A}\|\|\mathcal{A}^{-1}\|\|\mathbf{d}^0\|, {M}\|\mathcal{A}\|\|F_\beta\| \}$.
	Let us observe that, in this case, we have
	$$\mathbb{E}(\|A\mathbf{x}^k-\mathbf{b} \|) \leq \frac{1}{\beta}  \mathbb{E}(\|\mathbf{d}^k\|)\leq  \frac{1}{\beta}C_2R^kk ,$$
	and hence, using \eqref{eq:dual_dk_upper} and defining $\overline{C}:=\max\{C_1C_2,\frac{1}{\beta}C_2\}$,  we obtain that to produce an expected $\varepsilon$~-~accurate primal-dual solution it suffices to perform 
	$ k+ \log_{R}k \geq \log_{R}(\varepsilon/\overline{C}) $ iterations of iALM.
	The last part of the statement follows observing that $\lim_{k \to \infty} \frac{k+ \log_{R}k}{k}=1$.
\end{proof}

Before concluding this section, let us state the following Corollary~\ref{cor:limit_finitness}, which will be used later:

\begin{corollary}\label{cor:limit_finitness}
	Suppose $\mathbb{E}(\|\mathbf{r}^j\|) \leq R^{j+1}$ for all $j \in \mathbb{N}$ and $R<1$. If $R>\rho_\beta$, then 
	\begin{equation}
	\frac{\|\beta(A\mathbf{x}^k-\mathbf{b})\|}{R^k} \leq  L < \infty \hbox{ a.s. for every } k \in \mathbb{N},
	\end{equation}
	and hence, we have
	\begin{equation} \label{eq:limit_finitness}
	\mathbb{E}( \frac{\|\beta(A\mathbf{x}^k-\mathbf{b})\|}{R^k}) \leq  L \hbox{ for every } k \in \mathbb{N}.
	\end{equation}

\end{corollary}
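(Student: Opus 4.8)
The plan is to reduce everything to the residual $\mathbf{d}^k$, which is already controlled by Theorem~\ref{theo:inexact_convergence}. First I would note that $\beta(A\mathbf{x}^k-\mathbf{b})$ is nothing but the bottom block of $\mathbf{d}^k=\mathcal{A}[\mathbf{x}^k,\boldsymbol{\mu}^k]^T-\mathbf{q}$ (as already used in the proof of Lemma~\ref{lem:epsol_iALM}), so that $\|\beta(A\mathbf{x}^k-\mathbf{b})\|\leq\|\mathbf{d}^k\|$ a.s. Since $\mathbb{E}(\|\mathbf{r}^j\|)\leq R^{j+1}$ is summable, one has $\lim_{j}\|\mathbf{r}^j\|=0$ a.s. (Borel--Cantelli, exactly as in the proof of Lemma~\ref{lem:epsol_iALM}), so Theorem~\ref{theo:inexact_convergence} is in force and the bound \eqref{eq:inexact_residual} on $\|\mathbf{d}^k\|$ holds a.s.

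Next I would divide \eqref{eq:inexact_residual} by $R^k$ and exploit $\rho_\beta<R$. The first term becomes $\|\mathcal{A}\|\|\mathcal{A}^{-1}\|\|\mathbf{d}^0\|(\rho_\beta/R)^k\leq\|\mathcal{A}\|\|\mathcal{A}^{-1}\|\|\mathbf{d}^0\|$. For the convolution term I would reindex by $i=k-1-j$ to obtain $R^{-k}\sum_{j=0}^{k-1}\rho_\beta^{k-1-j}\|\mathbf{r}^j\|=\sum_{i=0}^{k-1}(\rho_\beta/R)^{i}\,\|\mathbf{r}^{k-1-i}\|/R^{k-i}$ and then take expectations: since $\mathbb{E}(\|\mathbf{r}^{k-1-i}\|)\leq R^{k-i}$, each summand has expectation $\leq(\rho_\beta/R)^{i}$, hence the whole sum is bounded by $\sum_{i\geq0}(\rho_\beta/R)^{i}=R/(R-\rho_\beta)$. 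Collecting the pieces gives $\mathbb{E}\big(\|\beta(A\mathbf{x}^k-\mathbf{b})\|/R^k\big)\leq\|\mathcal{A}\|\|\mathcal{A}^{-1}\|\|\mathbf{d}^0\|+\|\mathcal{A}\|\|F_\beta\|\,R/(R-\rho_\beta)=:L$, uniformly in $k$, which is exactly \eqref{eq:limit_finitness}.

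For the pathwise (a.s.) inequality I would run the analogous estimate realization by realization: bounding $\sum_{j=0}^{k-1}\rho_\beta^{k-1-j}\|\mathbf{r}^j\|\leq\big(\sup_j\|\mathbf{r}^j\|/R^j\big)\sum_{j=0}^{k-1}\rho_\beta^{k-1-j}R^j\leq\big(\sup_j\|\mathbf{r}^j\|/R^j\big)R^k/(R-\rho_\beta)$ yields $\|\beta(A\mathbf{x}^k-\mathbf{b})\|/R^k\leq\|\mathcal{A}\|\|\mathcal{A}^{-1}\|\|\mathbf{d}^0\|+\|\mathcal{A}\|\|F_\beta\|(R-\rho_\beta)^{-1}\sup_j\|\mathbf{r}^j\|/R^j=:L$ for every $k$. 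The step I expect to be the main obstacle is precisely the a.s. finiteness of $\sup_j\|\mathbf{r}^j\|/R^j$: the hypothesis only delivers $\mathbb{E}(\|\mathbf{r}^j\|/R^j)\leq R$, which is not summable, so this does not come for free. I would handle it through Markov's inequality together with Borel--Cantelli, which for every rate $\widetilde{R}\in(R,1)$ force $\|\mathbf{r}^j\|\leq\widetilde{R}^{\,j}$ eventually a.s. (so each fixed-$k$ ratio is a.s. finite, with $L$ a realization-dependent but a.s.-finite constant), and which collapse $L$ to the deterministic value of the previous paragraph in the settings of the later sections where $\|\mathbf{r}^j\|\leq R^{j+1}$ is enforced a.s. In all cases \eqref{eq:limit_finitness} itself is unconditional under the stated hypotheses and follows by taking expectations in the pathwise bound (or directly from the second paragraph).
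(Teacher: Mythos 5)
Your derivation of \eqref{eq:limit_finitness} follows the same route as the paper: divide \eqref{eq:inexact_residual} by $R^k$ and bound the resulting geometric convolution by $\tfrac{1}{1-\rho_\beta/R}=\tfrac{R}{R-\rho_\beta}$; your constant $L=\|\mathcal{A}\|\|\mathcal{A}^{-1}\|\|\mathbf{d}^0\|+\|\mathcal{A}\|\|F_\beta\|\tfrac{R}{R-\rho_\beta}$ coincides with the one implicit in the paper's proof. The one genuine difference is the order of operations: the paper bounds the convolution \emph{pathwise} by substituting $\|\mathbf{r}^j\|\leq R^{j+1}$ into the sum, which tacitly upgrades the hypothesis $\mathbb{E}(\|\mathbf{r}^j\|)\leq R^{j+1}$ to an almost-sure bound, whereas you take expectations first and only then invoke the hypothesis. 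Your version is the rigorous one under the stated assumptions, and it delivers exactly the inequality \eqref{eq:limit_finitness} that is actually used downstream in Theorems~\ref{theo:cgNONincreasingIts} and~\ref{theo:sorNONincreasingIts}. Your diagnosis of the pathwise claim is also accurate: from the expectation hypothesis alone, Markov plus Borel--Cantelli only yield $\|\mathbf{r}^j\|\leq\widetilde{R}^{\,j}$ eventually a.s.\ for $\widetilde{R}>R$, so a uniform-in-$k$ bound on $\|\beta(A\mathbf{x}^k-\mathbf{b})\|/R^k$ with a single deterministic $L$ genuinely requires the residual bound to hold pathwise (as it does in the deterministic CG setting, and as the paper's one-line proof implicitly assumes); the first display of the corollary is thus stated slightly more strongly than its proof supports, and you are right to flag this while noting that the conclusion that matters is unaffected.
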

\begin{proof}
%	Using Lemma~\ref{lem:epsol_iALM}, since $\|\mathbf{d}^k\|$ converges to zero a.s. and since $\|\beta(A\mathbf{x}^k-\mathbf{b})\|\leq \|\mathbf{d}^k\|$, we have that $\|\beta(A\mathbf{x}^k-\mathbf{b})\|$ converges to zero a.s.. 
%	
	Using \eqref{eq:inexact_residual} we have
	\begin{equation*} 
	\begin{split}
	& \frac{\|\beta(A\mathbf{x}^k-\mathbf{b})\|}{R^k}\leq \frac{\|\mathbf{d}^k\|}{R^k} \leq   \\
	&  {M}\|\mathcal{A}\|\|\mathcal{A}^{-1}\|\|\mathbf{d}^0\|(\frac{\rho_\beta}{R})^k+{M}\|\mathcal{A}\|\|F_{\beta}\|\sum_{j=0}^{k-1}(\frac{\rho_\beta}{R})^{k-1-j},
	\end{split}
	\end{equation*}
	from which thesis follows observing that $\sum_{j=0}^{k-1}(\frac{\rho_\beta}{R})^{k-1-j} \leq \frac{1}{1-\frac{\rho_\beta}{R}}$ for all $k$.
\end{proof}

\section{The solution of the linear system} \label{sec:SLS}
In this section, \textit{given} $[\mathbf{x}^k, \boldsymbol{\mu}^k]$, we suppose that the linear system
\begin{equation} \label{eq:inexat_min}
H_{\beta}\mathbf{x}=(A^T \boldsymbol{\mu}^k+\beta A^T \mathbf{b}-\mathbf{g}),
\end{equation}
is solved using an iterative solver; in particular, we will consider two different methods for the solution of the SPD system in \eqref{eq:inexat_min}, namely the Conjugate Gradient (CG) method \cite{MR0060307} and a Randomly Shuffled version of the Successive Over-Relaxation (RSSOR) method \cite{MR3621829}. 

%It is interesting to note that the \textit{increasing accuracy condition} for the expected residual in Lemma~\ref{lem:epsol_iALM},  i.e., $\mathbb{E}(\|\mathbf{r}^k\|) \leq R^{k+1}$, for the solution of the linear system \eqref{eq:inexat_min} seems to suggest that the number of iterations of the chosen iterative solver should increase when the iterates of iALM proceeds.  In this section we will show that this is not the case if $R > \rho_{\beta}$.

Since $\mathbf{r}^k$ in the first equation of \eqref{eq:IExactALM} is the (possibly randomized) residual  associated to the linear system \eqref{eq:inexat_min},  i.e.,
\begin{equation*} 
H_{\beta}\mathbf{x}^{k+1}-(A^T \boldsymbol{\mu}^k+\beta A^T \mathbf{b}-\mathbf{g})=\mathbf{r}^k,
\end{equation*}    
one would be tempted to think that the \textit{increasing accuracy condition} for the expected residual in Lemma~\ref{lem:epsol_iALM},  i.e., $\mathbb{E}(\|\mathbf{r}^k\|) \leq R^{k+1}$, requires that the number of iterations of the chosen iterative solver increases when the iterates of iALM proceed.  In this section we will show that this is not the case if $R > \rho_{\beta}$.

    For the remaining of this work let us define
 \begin{equation*}
 	\boldsymbol{\chi}^k:=A^T \boldsymbol{\mu}^k+\beta A^T \mathbf{b}-\mathbf{g},
 \end{equation*}
 and $\{\eta^k\}_k\to 0$ as the \textit{forcing sequence} such that $\mathbb{E}(\|\mathbf{r}^k\|) \leq \eta^k$ for all $k \in \mathbb{N}$.
 
 We use, moreover, the following inequalities: given $B \in \mathbb{R}^{d \times d}$ SPD, if we order the eigenvalues of $B$ as $\lambda_1(B)\geq \dots \lambda_d(B)$, it holds
  \begin{equation}\label{eq:SPDmatrix norm}
  	\lambda_{d}(B)\|\mathbf{x}\|^2_B \leq \|B\mathbf{x}\|^2 \leq \lambda_1(B)\|\mathbf{x}\|^2_B \hbox{ for all } \mathbf{x}\in \mathbb{R}^{d}
  \end{equation}
  and
  \begin{equation}\label{eq:SPDmatrix norm2}
  	\lambda_{d}(B)\|\mathbf{x}\|^2\leq \|B^{1/2}\mathbf{x}\|^2 \leq \lambda_1(B)\|\mathbf{x}\|^2 \hbox{ for all } \mathbf{x}\in \mathbb{R}^{d}.
  \end{equation}

 \subsection{Conjugate Gradient Method} \label{sec:CG}
 In this subsection we suppose that the linear system \eqref{eq:inexat_min} is solved using the Conjugate Gradient (CG) method and hence, all the results presented in Section \ref{sec:iALM} will be used in the \textit{deterministic} case. The following Theorem~\ref{theo:cgrate} addresses the rate of convergence of CG:
 
 \begin{theorem}(\cite[Th. 6.29]{saig}) \label{theo:cgrate}
 	Consider the linear system $B\mathbf{y}=\boldsymbol{\chi}$ where $B$ is a SPD matrix and $\overline{\mathbf{y}}$ is its solution. Then the iterates $\{\mathbf{y}^j\}_j$  produced by the CG method satisfy 
 	\begin{equation*} \label{eq:cgRate}
 		\|\overline{\mathbf{y}}- \mathbf{y}^j\|_B \leq 2\big [ \frac{\sqrt{{k}_2(B)}-1}{\sqrt{k_2(B)}+1} \big]^j\|\overline{\mathbf{y}}- \mathbf{y}^0\|_B.
 	\end{equation*}
 \end{theorem}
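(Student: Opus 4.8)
The plan is to invoke the classical polynomial-optimality characterisation of the Conjugate Gradient iterates together with a Chebyshev polynomial estimate; since the statement is quoted verbatim from \cite[Th. 6.29]{saig}, in the paper itself it is enough to cite that reference, but I sketch the argument here.

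First I would recall that the $j$-th CG iterate satisfies $\mathbf{y}^j \in \mathbf{y}^0 + \mathcal{K}_j(B,\mathbf{r}^0)$, where $\mathbf{r}^0 := \boldsymbol{\chi} - B\mathbf{y}^0$ and $\mathcal{K}_j(B,\mathbf{r}^0) = \mathrm{span}\{\mathbf{r}^0, B\mathbf{r}^0, \dots, B^{j-1}\mathbf{r}^0\}$, and that among all vectors in this affine subspace $\mathbf{y}^j$ is precisely the one minimising the $B$-norm of the error $\overline{\mathbf{y}} - \mathbf{y}$. Using $\mathbf{r}^0 = B(\overline{\mathbf{y}} - \mathbf{y}^0)$, the error $\mathbf{e}^j := \overline{\mathbf{y}} - \mathbf{y}^j$ then takes the form $\mathbf{e}^j = p_j(B)\mathbf{e}^0$ with $p_j$ a polynomial of degree at most $j$ normalised by $p_j(0)=1$, and CG selects the polynomial realising the minimum of $\|p(B)\mathbf{e}^0\|_B$ over all such $p$. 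Expanding $\mathbf{e}^0$ in an orthonormal eigenbasis of $B$ gives
\begin{equation*}
\|\mathbf{e}^j\|_B = \min_{\substack{p \in \Pi_j \\ p(0)=1}} \|p(B)\mathbf{e}^0\|_B \leq \Big( \min_{\substack{p \in \Pi_j \\ p(0)=1}} \; \max_{\lambda \in [\lambda_n(B),\, \lambda_1(B)]} |p(\lambda)| \Big)\, \|\mathbf{e}^0\|_B,
\end{equation*}
where $\Pi_j$ denotes the polynomials of degree at most $j$.

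Next I would bound the scalar min-max quantity by exhibiting a concrete near-optimal choice, namely the shifted and scaled Chebyshev polynomial
\begin{equation*}
p_j(\lambda) = \frac{T_j\!\left(\dfrac{\lambda_1(B) + \lambda_n(B) - 2\lambda}{\lambda_1(B) - \lambda_n(B)}\right)}{T_j\!\left(\dfrac{\lambda_1(B) + \lambda_n(B)}{\lambda_1(B) - \lambda_n(B)}\right)},
\end{equation*}
with $T_j(\cos\theta) = \cos(j\theta)$. On $[\lambda_n(B),\lambda_1(B)]$ the numerator argument lies in $[-1,1]$, so $|T_j|\leq 1$ there, while the denominator is $T_j$ evaluated at $\frac{k_2(B)+1}{k_2(B)-1}>1$. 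Using $T_j(x) = \tfrac12\big[(x+\sqrt{x^2-1})^j + (x-\sqrt{x^2-1})^j\big]$ for $x>1$ together with the algebraic identity $x+\sqrt{x^2-1} = \frac{\sqrt{k_2(B)}+1}{\sqrt{k_2(B)}-1}$ for this particular $x$, one obtains $T_j\big(\frac{k_2(B)+1}{k_2(B)-1}\big) \geq \tfrac12\big(\frac{\sqrt{k_2(B)}+1}{\sqrt{k_2(B)}-1}\big)^j$. Hence $\max_{\lambda}|p_j(\lambda)| \leq 2\big(\frac{\sqrt{k_2(B)}-1}{\sqrt{k_2(B)}+1}\big)^j$, and substituting into the displayed inequality yields the claim.

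The hard part will be the Chebyshev step: checking that the normalised optimal polynomial is controlled by the value of $T_j$ at $\frac{k_2(B)+1}{k_2(B)-1}$ and carrying out the computation $x+\sqrt{x^2-1} = \frac{\sqrt{k_2(B)}+1}{\sqrt{k_2(B)}-1}$ to extract the exact constant. Everything else — the Krylov/polynomial characterisation of CG and the eigenbasis expansion — is routine, and for the purposes of this paper the reference \cite[Th. 6.29]{saig} suffices.
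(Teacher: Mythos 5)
Your argument is correct and is precisely the standard Krylov/Chebyshev proof given in the cited reference \cite[Th.~6.29]{saig}; the paper itself supplies no proof beyond that citation, so there is nothing to compare against and no gap to report. The polynomial-optimality characterisation, the eigenbasis bound, and the computation $x+\sqrt{x^2-1}=\frac{\sqrt{k_2(B)}+1}{\sqrt{k_2(B)}-1}$ are all carried out correctly.
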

 
 Applying Theorem~\ref{theo:cgrate} to the solution of the linear system \eqref{eq:inexat_min} and setting  $\mathbf{y}^0=\mathbf{x}^k$ for every $k \in \mathbb{N}$, we have,
 
 \begin{equation} \label{eq:CGforiALM}
 \|\overline{\mathbf{x}}^{k+1}- \mathbf{x}^{k+1, j}\|_{H_{\beta}} \leq 2\big [ \frac{\sqrt{{k}_2(H_{\beta})}-1}{\sqrt{k_2(H_{\beta})}+1} \big]^j\|\overline{\mathbf{x}}^{k+1}- \mathbf{x}^{k}\|_{H_{\beta}},
 \end{equation}
 where $H_{\beta}\overline{\mathbf{x}}^{k+1}=\boldsymbol{\chi}^k$ and $\{\mathbf{x}^{k+1, j}\}_j$ is the sequence generated by CG to approximate $\overline{\mathbf{x}}^{k+1}$.
 
 \begin{theorem} \label{theo:cgNONincreasingIts}
 	Let $\{\eta^k\}_k=R^{k+1}$ with $R>\rho_\beta$. Define 
 	\begin{equation}\label{eq:cg_min_guarantee}
 		\overline{j}^k:= \min\{j \;:\; \|\mathbf{r}^{k,j}\| \leq \eta^k \}
 	\end{equation}
 	where $\{\mathbf{r}^{k,j}:=H_\beta \mathbf{x}^{k+1,j}-\boldsymbol{\chi}^k\}_j$ is the sequence of residuals generated  from CG.
 	Then, there exists $ \overline{j} \in \mathbb{N}$ such that 
 	$\overline{j} \geq \overline{j}^k$ for all $k$. Moreover, an $\varepsilon$~-~accurate primal-dual solution of problem \eqref{eq:QP_problem_ADMM} can be obtained in $O(\log_{R}\varepsilon)$ iALM iterations.
 \end{theorem}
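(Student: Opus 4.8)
The plan is to show that, although the target accuracy $\eta^k=R^{k+1}$ tightens geometrically in $k$, the quantity that CG actually has to reduce at step $k$ — namely the initial residual of \eqref{eq:inexat_min} started from the warm point $\mathbf{x}^k$ — also shrinks like $R^k$, so that a \emph{fixed} number of CG steps always suffices. First I would note that, by construction, the inner CG loop at step $k$ is run until $\|\mathbf{r}^{k,j}\|\le\eta^k$, and this occurs after finitely many steps because the contraction factor $c_\beta:=\tfrac{\sqrt{k_2(H_\beta)}-1}{\sqrt{k_2(H_\beta)}+1}<1$ drives the right-hand side of \eqref{eq:CGforiALM} to $0$; hence $\|\mathbf{r}^k\|\le R^{k+1}$ holds \emph{unconditionally} for all $k$, so the deterministic counterpart of Theorem~\ref{theo:inexact_convergence} applies. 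Using $R>\rho_\beta$ and reindexing $\sum_{j=0}^{k-1}\rho_\beta^{k-1-j}R^{j+1}=R^k\sum_{i=0}^{k-1}(\rho_\beta/R)^i\le \tfrac{R^{k+1}}{R-\rho_\beta}$, this yields a bound $\|\mathbf{e}^k\|\le C R^k$ with $C$ independent of $k$.

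Next I would estimate the CG warm-start error $\|\overline{\mathbf{x}}^{k+1}-\mathbf{x}^k\|_{H_\beta}$ that appears on the right of \eqref{eq:CGforiALM}. Writing $\overline{\mathbf{x}}^{k+1}-\mathbf{x}^k=(\overline{\mathbf{x}}^{k+1}-\overline{\mathbf{x}})+(\overline{\mathbf{x}}-\mathbf{x}^k)$ and using that $[\overline{\mathbf{x}},\overline{\boldsymbol{\mu}}]^T$ solves \eqref{eq:KKTlS}, i.e.\ $H_\beta\overline{\mathbf{x}}=A^T\overline{\boldsymbol{\mu}}+\beta A^T\mathbf{b}-\mathbf{g}$, while $H_\beta\overline{\mathbf{x}}^{k+1}=\boldsymbol{\chi}^k=A^T\boldsymbol{\mu}^k+\beta A^T\mathbf{b}-\mathbf{g}$, one gets $\overline{\mathbf{x}}^{k+1}-\overline{\mathbf{x}}=H_\beta^{-1}A^T(\boldsymbol{\mu}^k-\overline{\boldsymbol{\mu}})$, so both summands are $O(\|\mathbf{e}^k\|)$ in Euclidean norm; then \eqref{eq:SPDmatrix norm2} converts this into $\|\overline{\mathbf{x}}^{k+1}-\mathbf{x}^k\|_{H_\beta}\le \tilde C R^k$ with $\tilde C$ independent of $k$. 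Plugging this into \eqref{eq:CGforiALM}, and then returning from the $H_\beta$-norm to the Euclidean residual via \eqref{eq:SPDmatrix norm} applied to $\mathbf{r}^{k,j}=H_\beta(\mathbf{x}^{k+1,j}-\overline{\mathbf{x}}^{k+1})$, gives
\begin{equation*}
\|\mathbf{r}^{k,j}\|\le D\,R^k\,c_\beta^{\,j},\qquad D:=2\lambda_1(H_\beta)\tilde C,
\end{equation*}
with $D$ independent of $k$. Hence $\|\mathbf{r}^{k,j}\|\le\eta^k=R^{k+1}$ is guaranteed whenever $c_\beta^{\,j}\le R/D$, i.e.\ for every $j\ge \overline{j}:=\max\{0,\lceil\log_{c_\beta}(R/D)\rceil\}$, which is the required uniform bound $\overline{j}\ge\overline{j}^k$ for all $k$.

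Finally, since $\|\mathbf{r}^k\|\le R^{k+1}$ holds for all $k$ with $R>\rho_\beta$, the last assertion is exactly the case $\rho_\beta\le R$ of the deterministic version of Lemma~\ref{lem:epsol_iALM}, which gives that $O(\log_R\varepsilon)$ iALM iterations produce an $\varepsilon$-accurate primal-dual solution for $\varepsilon$ small enough. The delicate point is the passage from the per-iteration stopping rule to the uniform bound: one must check that invoking Theorem~\ref{theo:inexact_convergence} is not circular — the estimate $\|\mathbf{e}^k\|\le C R^k$ uses $\|\mathbf{r}^j\|\le R^{j+1}$ for $j<k$, but these are true by construction of the inner solve rather than assumed — and one must sum the geometric series pulling out $R^k$ instead of $\rho_\beta^k$, which is legitimate precisely because $\rho_\beta/R<1$.
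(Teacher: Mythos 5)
Your proof is correct and follows the same overall strategy as the paper's: the stopping rule makes $\|\mathbf{r}^j\|\le R^{j+1}$ hold by construction, the outer iteration therefore contracts at rate $R$ (since $R>\rho_\beta$), the warm-started inner problem thus has an initial defect of size $O(R^k)$ matching the tolerance $R^{k+1}$, and \eqref{eq:CGforiALM} closes the loop with a $k$-independent number of CG steps; the complexity claim is then the case $\rho_\beta\le R$ of Lemma~\ref{lem:epsol_iALM}. Where you diverge is in the technical step that bounds the warm-start quantity. The paper stays entirely on the residual side: it writes $H_\beta\mathbf{x}^k-\boldsymbol{\chi}^k=\mathbf{r}^{k-1}+\beta A^T(A\mathbf{x}^k-\mathbf{b})$ (equation \eqref{eq:prev_res_newrhs}, a one-line consequence of the dual update) and controls the second term by Corollary~\ref{cor:limit_finitness}, which asserts $\|\beta(A\mathbf{x}^k-\mathbf{b})\|/R^k\le L$. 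You instead work on the error side: you re-derive a bound $\|\mathbf{e}^k\|\le CR^k$ from Theorem~\ref{theo:inexact_convergence} and then bound $\|\overline{\mathbf{x}}^{k+1}-\mathbf{x}^k\|$ by splitting through $\overline{\mathbf{x}}$ and using $\overline{\mathbf{x}}^{k+1}-\overline{\mathbf{x}}=H_\beta^{-1}A^T(\boldsymbol{\mu}^k-\overline{\boldsymbol{\mu}})$. Both routes rest on the same two inputs (the unconditional residual bound coming from the stopping rule and the geometric decay of the outer iteration), and both require the non-circularity observation you correctly flag at the end: the bound at outer step $k$ only consumes residual bounds for steps $j<k$, which are guaranteed by the inner solves already performed. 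The paper's version is slightly leaner because Corollary~\ref{cor:limit_finitness} is packaged for exactly this purpose and avoids introducing the auxiliary targets $\overline{\mathbf{x}}^{k+1}$; yours has the minor expository advantage of making explicit why the warm start works, namely that the inner-solve solutions $\overline{\mathbf{x}}^{k+1}$ drift toward $\overline{\mathbf{x}}$ at the same geometric rate as the iterates. One immaterial slip: when converting the energy-norm error bound into a Euclidean residual bound via \eqref{eq:SPDmatrix norm}, the constant $D$ should carry $\sqrt{\lambda_1(H_\beta)}$ rather than $\lambda_1(H_\beta)$; this does not affect the existence of the uniform $\overline{j}$.
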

 
 \begin{proof}
 	Using \eqref{eq:SPDmatrix norm} in \eqref{eq:CGforiALM} we have
 	
 	\begin{equation*}
 		\|H_{\beta}\mathbf{x}^{k+1,j}-\boldsymbol{\chi}^k\|\leq 2\big [ \frac{\sqrt{{k}_2(H_{\beta})}-1}{\sqrt{k_2(H_{\beta})}+1} \big]^j \sqrt {k_2(H_{\beta})} \|H_{\beta}\mathbf{x}^{k}-\boldsymbol{\chi}^k\|
 	\end{equation*}
 	and hence, if
 	
 	\begin{equation}\label{eq:CGiterEstimate}
 		j^k := \lceil \log_{\frac{\sqrt{{k}_2(H_{\beta})}-1} {\sqrt{k(H_{\beta})}+1}} \frac{\eta^k}{2\sqrt{k_2(H_{\beta})}\|H_{\beta}\mathbf{x}^{k}-\boldsymbol{\chi}^k\|} \rceil,
 	\end{equation}
 	 then   $\|\mathbf{r}^{k,j^k}\| \leq \eta^k$. Observe, moreover, that using the second equation in \eqref{eq:IExactALM} for the expression of $\boldsymbol{\chi}^k$, we have
 	 
 	 \begin{equation}\label{eq:prev_res_newrhs}
 	 \|H_{\beta}\mathbf{x}^{k}-\boldsymbol{\chi}^k\|=\|\mathbf{r}^{k-1}+\beta A^T(A\mathbf{x}^k-\mathbf{b})\|\leq \|\mathbf{r}^{k-1}\|+\|A^T\| \|\beta(A\mathbf{x}^k-\mathbf{b})\|.
 	 \end{equation}
 	 % 	  and that $\|\beta(A\mathbf{x}^k-\mathbf{b})\|\leq	\|\mathbf{d}^k\|\leq C\rho_\beta^k$
%    for some constant $C>0$ independent from $k$ (see Lemma~\ref{lem:eps_sol_compl} for the first inequality and Lemma~\ref{lem:epsol_iALM} for the second one), we have
% 	 
% 	where we used the hypothesis $\|\mathbf{r}^{k-1}\|\leq \eta^k\leq \mu \rho_\beta^{k}$.  
Using equation \eqref{eq:prev_res_newrhs}, we have
 	 
 	 \begin{equation} \label{eq:existimations_from_below}
 	 \begin{split}
 	 \frac{\eta^k}{2\sqrt{k_2(H_{\beta})}\|H_{\beta}\mathbf{x}^{k}-\boldsymbol{\chi}^k\|} \geq & \frac{\eta^k}{2\sqrt{k_2(H_{\beta})}(\|\mathbf{r}^{k-1}\|+\|A^T\| \|\beta(A\mathbf{x}^k-\mathbf{b})\|)} \geq\\
 	  & \frac{R^{k+1}}{2\sqrt{k_2(H_{\beta})}R^k(1+\frac{\|A^T\| \|\beta(A\mathbf{x}^k-\mathbf{b})\|}{R^k})}
 	 \end{split}.
 	 \end{equation}
 	 
 	 Using now equation \eqref{eq:limit_finitness} (deterministic case) in equation \eqref{eq:existimations_from_below} we can state the existence of a constant $C >0 $ such that, for all $k \in \mathbb{N}$, we have
 	 
 	 \begin{equation*}
 	 	\frac{R}{2\sqrt{k_2(H_{\beta})}(1+\frac{\|A^T\| \|\beta(A\mathbf{x}^k-\mathbf{b})\|}{R^k})} \geq C,
 	 \end{equation*}
 	and hence 
 	\begin{equation*}
 	\overline{j}:= \lceil \log_{\frac{\sqrt{{k}_2(H_{\beta})}-1} {\sqrt{k(H_{\beta})}+1}} C \rceil \geq j^k \hbox{ for all } k.
 	\end{equation*}
 The first part of the statement follows observing that  $j^k\geq \overline{j}^k$ for all $k$.  
 The last part of the statement follows, instead, observing that the hypotheses of Lemma~\ref{lem:epsol_iALM} are satisfied.
 	\end{proof}
\begin{corollary} \label{cor:mtrix_iALM_CG}
	If problem \eqref{eq:QP_problem_ADMM} is solved using the iALM  in \eqref{eq:IExactALM} and each sub-problem is solved using CG as in Theorem~\ref{theo:cgNONincreasingIts}, then $O(\log_{R} \varepsilon)$ matrix-vector multiplications involving $H_{\beta}$
	are sufficient to obtain an $\varepsilon$~-~accurate primal-dual solution.
\end{corollary}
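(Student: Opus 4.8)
The plan is simply to compose the two quantitative bounds already established in Theorem~\ref{theo:cgNONincreasingIts}. First I would recall that that theorem provides a \emph{uniform} bound $\overline{j}\in\mathbb{N}$, independent of the outer iteration index $k$ and of the target accuracy $\varepsilon$, on the number of CG iterations $\overline{j}^k$ needed to drive the residual of the $k$-th sub-problem \eqref{eq:inexat_min} below the forcing tolerance $\eta^k=R^{k+1}$. Since one (standard) CG iteration requires exactly one matrix-vector product with the system matrix $H_{\beta}$ — the remaining operations being vector updates and inner products — together with the single matrix-vector product used to form the initial residual $\mathbf{r}^{k,0}=H_{\beta}\mathbf{x}^{k}-\boldsymbol{\chi}^k$, the number of matrix-vector multiplications involving $H_{\beta}$ spent on the $k$-th sub-problem is at most $\overline{j}+1$, a constant independent of $k$ and $\varepsilon$.

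Next I would invoke the second assertion of Theorem~\ref{theo:cgNONincreasingIts}: because the hypotheses of Lemma~\ref{lem:epsol_iALM} hold (with $\rho_\beta\le R<1$), $N:=O(\log_{R}\varepsilon)$ outer iALM iterations suffice to produce an $\varepsilon$~-~accurate primal-dual solution of \eqref{eq:QP_problem_ADMM}. Multiplying the per-iteration cost by the number of outer iterations gives a total of at most $(\overline{j}+1)\,N=(\overline{j}+1)\cdot O(\log_{R}\varepsilon)=O(\log_{R}\varepsilon)$ matrix-vector products with $H_{\beta}$, since $\overline{j}+1$ is a fixed constant depending only on $k_2(H_{\beta})$, $R$, $\beta$ and the problem data (through the constant $C$ appearing in Theorem~\ref{theo:cgNONincreasingIts}), and not on $\varepsilon$. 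This is precisely the claimed complexity.

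There is essentially no obstacle here: all the hard work — in particular the uniform-in-$k$ boundedness of $\{\overline{j}^k\}_k$, which rests on Corollary~\ref{cor:limit_finitness} to control $\|\beta(A\mathbf{x}^k-\mathbf{b})\|/R^k$ and hence the denominator appearing in \eqref{eq:CGiterEstimate} — was already carried out in proving Theorem~\ref{theo:cgNONincreasingIts}. The only point deserving a word of care is the bookkeeping of what counts as a matrix-vector multiplication: one should fix a concrete CG implementation so that the ``one matvec per iteration'' accounting is literally correct, and observe that the additional matvec used at each outer step to evaluate the initial residual (and, if desired, the one-time cost of assembling $H_{\beta}$) only alters the multiplicative or additive constant, leaving the $O(\log_{R}\varepsilon)$ order unchanged.
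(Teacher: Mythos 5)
Your proof is correct and follows exactly the paper's route: the paper's own argument is the one-line observation that the claim follows from Theorem~\ref{theo:cgNONincreasingIts} and Lemma~\ref{lem:epsol_iALM} since each CG step costs one matrix-vector product with $H_{\beta}$, which is precisely the composition of bounds you carry out (with more careful bookkeeping of the constant per-iteration cost).
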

\begin{proof}
 It follows	from Theorem~\ref{theo:cgNONincreasingIts} and Lemma~\ref{lem:epsol_iALM} observing that each step of CG requires one matrix-vector product involving $H_{\beta}$. 
\end{proof}

In the upper panels of Figure~\ref{fig:inexact_CG} we report the quantities analysed in the proof of Lemma~\ref{lem:epsol_iALM} (the legend is consistent with the notation used in Lemma~\ref{lem:epsol_iALM} { except for the fact that we report $\overline{C} \equiv \overline{C}/M$ }). As reported in the proof Lemma~\ref{lem:epsol_iALM}  and confirmed by Figure~\ref{fig:inexact_CG}, when $R>\rho_\beta$, the function $\overline{C}R^kk$ is an upper bound for the quantities $\|A\mathbf{x}^k-\mathbf{b}\|$ and  $\|H\mathbf{x}^k + \mathbf{g} -A^T \boldsymbol{\mu}^k\|$. In the lower panels of Figure~\ref{fig:inexact_CG} we report the quantity $\overline{j}^k$ in equation \eqref{eq:cg_min_guarantee} obtained using CG for iALM step. In this example, in order to further highlight the fact that the number of CG iterations does not increase when the iALM iterations proceed (see Theorem~\ref{theo:cgNONincreasingIts}), we slowed down the speed of convergence of the iALM increasing $\rho_{\beta}$ and then choosing $R=\rho_{\beta}+10^{-2}$ (see also the numerical results reported in Figure~\ref{fig:exact_convergence} to have a term of comparison). 

Concerning the results obtained for Problem 2, it is interesting to note that the very fast decay of the dual residuals $\|H\mathbf{x}^k + \mathbf{g} -A^T \boldsymbol{\mu}^k\|$ is due to the fact that, in this case, 
CG can be considered as \textit{a direct method} since $H_{\beta}$ is reasonably well conditioned (see Figure~\ref{fig:beta}) and of small dimension. This is, indeed, a very similar behaviour of that observed  in Figure~\ref{fig:exact_convergence}, where the linear systems involving $H_{\beta}$  are solved using a direct method. 

\begin{figure}[ht!]
	\centering
	\includegraphics[width=\textwidth]{./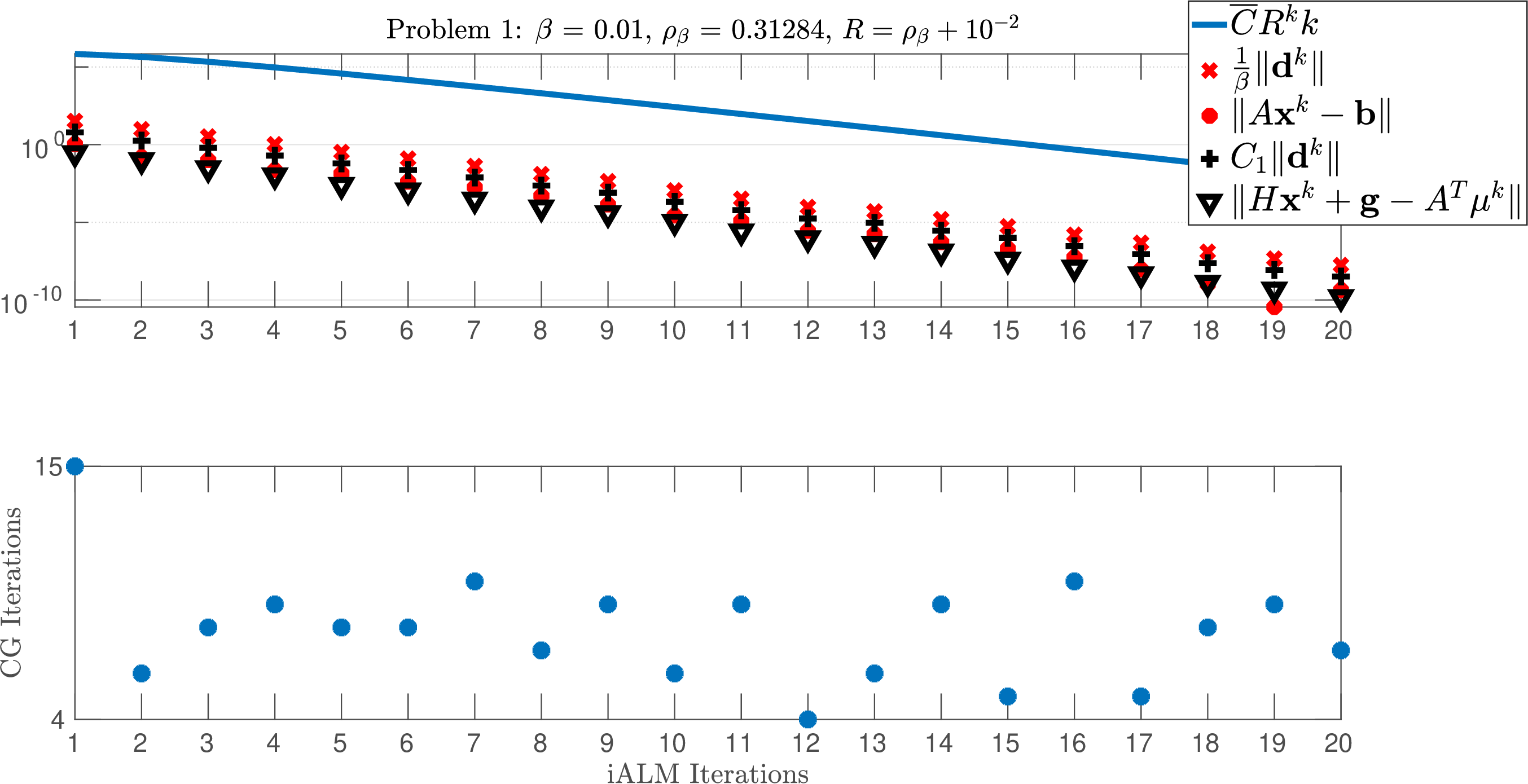}\\
	\includegraphics[width=\textwidth]{./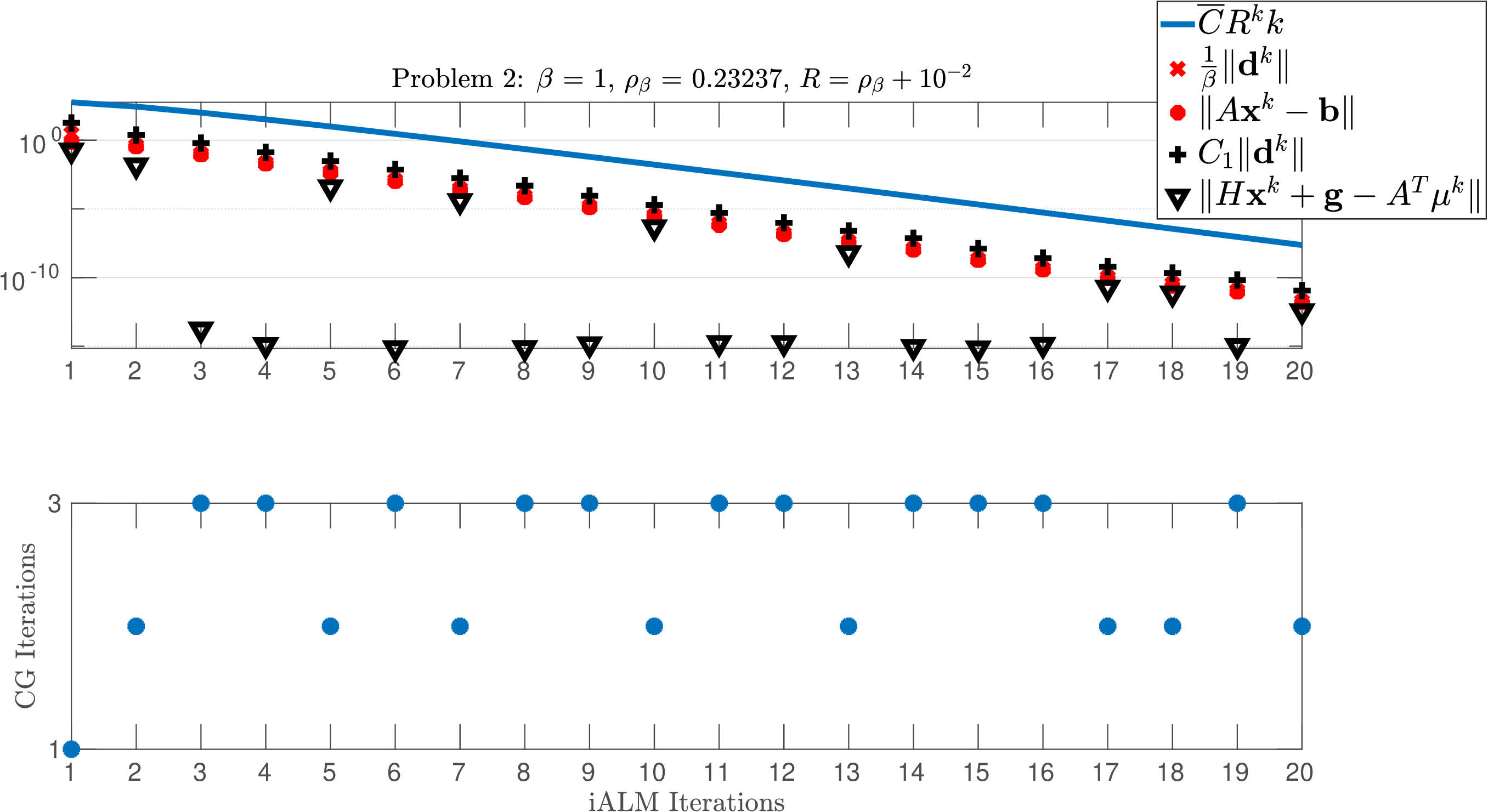}
	\caption{Upper panels: behaviour of the quantities analysed in Lemma~\ref{lem:epsol_iALM} (logarithmic scale on $y$-axis). Lower Panels: $\overline{j}^k$ in equation \eqref{eq:cg_min_guarantee} when CG is employed for the solution of \eqref{eq:inexat_min} using $\{\eta^k\}_k$ and $\{\mathbf{x}^{k+1, 0}\}_k$ as in Theorem~\ref{theo:cgNONincreasingIts}. } \label{fig:inexact_CG}
\end{figure} 

\subsection{SOR and Randomly Shuffled SOR}
 In this subsection we suppose that the linear system in equation \eqref{eq:inexat_min} is solved using the block Successive Over-Relaxation method (SOR) \cite{MR46149,MR2938019} or its Randomly Shuffled version (RSSOR) \cite{MR3621829}. Adapting the existing error-reduction results for the SOR method to our purposes, requires a slightly greater effort than in the CG case (see Section~\ref{sec:CG}). For this reason and for the sake of completeness, before presenting our results, we deliver a  brief survey on the block SOR method which  is based on \cite{MR0158502,MR3495481,MR3621829}.
 \subsubsection{A brief survey on SOR \cite{MR0158502}}
 Let $B \in \mathbb{C}^{d \times d}$. Consider the linear system 
 \begin{equation} \label{eq:linear_system_SOR}
 B\mathbf{y}= \boldsymbol{\chi}.
 \end{equation}
 We can express the matrix $B$ as the sum of block-matrices $B=D-L-U$ where 
 
 {\footnotesize
 	\begin{equation} \label{eq:SOR_SPLIT}
 	\begin{split}
 	&D:=\begin{bmatrix}
 	B_{1,1} & &  & \\
 	& B_{2,2} &   &\\ 
 	&  & \ddots  & \\
 	&  &    & B_{n,n}\\
 	\end{bmatrix}, \;	
 	L:=-\begin{bmatrix}
 	0_{1,1} &0 & 0 &0 \\
 	B_{2,1}& 0_{2,2} & 0  &\vdots \\ 
 	\vdots & \ddots & \ddots  & 0\\
 	B_{n,1}& B_{n,2} & \dots  & 0_{n,n}  
 	\end{bmatrix},\\
 	& U:=-\begin{bmatrix}
 	0_{1,1} &B_{1,2} &  & B_{1,n} \\
 	0 & 0_{2,2} & \ddots  &  \vdots \\ 
 	\vdots &  & \ddots  & B_{{n-1},n} \\
 	0 & 0& \dots  & 0_{n,n}  
 	\end{bmatrix}.
 	\end{split}	
 	\end{equation}
 }
 Let us suppose now that the block-diagonal matrix $D$ is invertible. The fixed point problem corresponding to equation \eqref{eq:linear_system_SOR} can be written as
 
 \begin{equation*}
 (D-\omega L)\mathbf{y}=((1-\omega) D+\omega U)\mathbf{y}+\omega \boldsymbol{\chi}
 \end{equation*}
 and the SOR method is defined as
 
 \begin{equation} \label{eq:SOR_NN}
 \mathbf{y}^{j+1}=(D-\omega L)^{-1}((1-\omega) D+\omega U)\mathbf{y}^j+\omega(D-\omega L)^{-1} \boldsymbol{\chi}.
 \end{equation}
 The Gauss-Seidel (GS) method is recovered for $\omega=1$. Observe that equation \eqref{eq:SOR_NN} can be written alternatively as
 \begin{equation} \label{eq:SOR_NN2}
 \mathbf{y}^{j+1}=(I-\omega D^{-1}L)^{-1}((1-\omega) I+\omega D^{-1}U)\mathbf{y}^j+\omega(I- \omega D^{-1}L)^{-1}D^{-1}\boldsymbol{\chi},
 \end{equation}
 and for this reason, usually, the \textit{point successive over-relaxation matrix} is defined  as
 
 \begin{equation*}
 \mathcal{L}_{\omega}:=(I-\omega D^{-1}L)^{-1}((1-\omega)I+\omega D^{-1} U).
 \end{equation*}
 
 The following Corollary of the Ostrowski-Reich Theorem states the convergence of the block SOR iteration: 
 \begin{corollary}(\cite[Cor. 3.14]{MR0158502})
 	Let $B \in \mathbb{C}^{n \times n}$ and $D,L,U$ be defined as in \eqref{eq:SOR_SPLIT}. If $D$ is positive definite, then the {block SOR} method in \eqref{eq:SOR_NN} is convergent for all $\mathbf{y}^0$ if and only if $0<\omega<2$ and $B$ is positive definite. 
 \end{corollary}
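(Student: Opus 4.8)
The plan is to recognise the block SOR recursion as a matrix–splitting iteration and to apply the Ostrowski--Reich / Householder--John circle of ideas, then to make the two emerging conditions on $\omega$ and on $B$ explicit. First I would write $B = M_\omega - N_\omega$ with
\begin{equation*}
M_\omega := \tfrac{1}{\omega}(D-\omega L), \qquad N_\omega := \tfrac{1}{\omega}\big((1-\omega)D+\omega U\big),
\end{equation*}
so that the iteration matrix in \eqref{eq:SOR_NN} is exactly $T_\omega := M_\omega^{-1}N_\omega = \mathcal{L}_\omega$, and note that $M_\omega$ is nonsingular: $D-\omega L$ is block lower triangular with diagonal blocks equal to those of $D$, hence nonsingular because $D$ is positive definite. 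Throughout I read ``$B$ positive definite'' in the Hermitian sense (as in \cite{MR0158502}); together with the block structure of the splitting this forces $D=D^{*}$ and $U=L^{*}$. Convergence of \eqref{eq:SOR_NN} for every $\mathbf{y}^0$ is equivalent to $\rho(T_\omega)<1$, so the task is the equivalence $\rho(T_\omega)<1 \iff (0<\omega<2 \text{ and } B\succ 0)$ under the standing hypothesis $D\succ 0$.

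The algebraic backbone I would establish is the identity, valid for any Hermitian $B=M-N$ with $M$ nonsingular and $Q:=M+M^{*}-B$,
\begin{equation*}
B - T^{*}BT = (M^{-1}B)^{*}\,Q\,(M^{-1}B), \qquad T:=M^{-1}N,
\end{equation*}
obtained by substituting $T=I-M^{-1}B$ and expanding. Specialised to SOR, a one-line computation using $D=D^{*}$ and $U=L^{*}$ gives $Q_\omega:=M_\omega+M_\omega^{*}-B=\tfrac{2-\omega}{\omega}D$, which, since $D\succ 0$, is Hermitian positive definite precisely when $0<\omega<2$.

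For the implication ``$0<\omega<2$ and $B\succ 0$ $\Rightarrow$ convergence'' I would argue on an eigenpair $T_\omega v=\lambda v$, $v\neq 0$. From $N_\omega v=\lambda M_\omega v$ one gets $Bv=(1-\lambda)M_\omega v$, so $\lambda\neq 1$ (otherwise $Bv=0$ contradicts $B\succ 0$); substituting $v^{*}M_\omega v = v^{*}Bv/(1-\lambda)$ and its conjugate into $v^{*}Q_\omega v = v^{*}M_\omega v + v^{*}M_\omega^{*}v - v^{*}Bv$ yields $v^{*}Q_\omega v = v^{*}Bv\cdot(1-|\lambda|^{2})/|1-\lambda|^{2}$, and positivity of both $v^{*}Q_\omega v$ and $v^{*}Bv$ forces $|\lambda|<1$; hence $\rho(T_\omega)<1$. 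For the converse I would first extract $0<\omega<2$ from Kahan's observation that $\det T_\omega = (1-\omega)^{n}$ (both $M_\omega$ and $N_\omega$ are block triangular with readable determinants), so $\rho(T_\omega)\geq|1-\omega|$ and $\rho(T_\omega)<1$ already implies $0<\omega<2$, whence $Q_\omega\succ 0$. Next, $B$ is nonsingular, since a null vector of $B$ is a fixed point of $T_\omega$ and would give $\rho(T_\omega)\geq 1$; therefore $M_\omega^{-1}B$ is nonsingular and the identity yields $B-T_\omega^{*}BT_\omega=(M_\omega^{-1}B)^{*}Q_\omega(M_\omega^{-1}B)\succ 0$. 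Finally the Stein/Lyapunov telescoping sum $B=\sum_{k\geq 0}(T_\omega^{*})^{k}(B-T_\omega^{*}BT_\omega)T_\omega^{k}$, which converges because $\rho(T_\omega)<1$ and is a sum of positive semidefinite terms whose $k=0$ term is positive definite, shows $B\succ 0$.

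The main obstacle I expect is this converse direction, i.e.\ recovering positive definiteness of $B$ from mere convergence: that is where the identity together with the Lyapunov/Stein telescoping argument does the real work, and one must keep track of the Hermitian reading of ``positive definite'' so that $U=L^{*}$ and the formula for $Q_\omega$ are legitimate. The forward implication and the evaluation of $Q_\omega$ are routine once the splitting has been written down.
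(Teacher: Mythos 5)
The paper does not prove this corollary at all: it is quoted verbatim from Varga \cite[Cor.\ 3.14]{MR0158502}, so there is no in-paper argument to compare against. Your proof is a correct, self-contained rendition of the classical Ostrowski--Reich/Householder--John argument that underlies that citation: the splitting $B=M_\omega-N_\omega$, the identity $B-T_\omega^{*}BT_\omega=(M_\omega^{-1}B)^{*}Q_\omega(M_\omega^{-1}B)$ with $Q_\omega=\frac{2-\omega}{\omega}D$, the eigenpair computation $v^{*}Q_\omega v = v^{*}Bv\,(1-|\lambda|^{2})/|1-\lambda|^{2}$ for the forward direction, and Kahan's determinant bound $\rho(T_\omega)\geq|1-\omega|$ plus the Stein telescoping sum for the converse are all checked and correct. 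The one point worth making explicit is that the converse direction (convergence $\Rightarrow$ $B\succ 0$) only makes sense if $B$ Hermitian is taken as a standing hypothesis rather than derived, since $D=D^{*}$ and $U=L^{*}$ are needed before positive definiteness of $B$ is known; this is implicit in Varga's statement and harmless in the paper, where the corollary is only ever applied to symmetric $H_\beta$, but it should be stated as an assumption rather than a consequence.
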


In this work we are going to deal just with symmetric matrices and, for this reason, we denote the factor $U$ in \eqref{eq:SOR_SPLIT} with $L^T$.  It is worth noting, moreover,  that using the equality $(1-\omega)D+\omega L^T=(D-\omega L)- \omega B$, we can further rewrite the SOR iteration in \eqref{eq:SOR_NN} as
\begin{equation} \label{eq:SOR_energy_norm_conv}
\mathbf{y}^{j+1}=(I-\omega (D-\omega L)^{-1}B)\mathbf{y}^j+\omega (D- \omega L)^{-1}\boldsymbol{\chi}.
\end{equation}

In \cite{MR3621829}, a Randomly Shuffled version of SOR (RSSOR) has been introduced and studied: it is obtained considering $P^{j}$ as a random permutation matrix (with uniform distribution and independent from the current guess $\mathbf{y}^j$) and applying the SOR splitting to the linear system $P^jB{P^j}^TP^j\mathbf{x}=P^j\boldsymbol{\chi}$, i.e., considering $$P^jB{P^j}^T=D_{P^j}-L_{P^j}-L_{P^j}^T.$$ 
The RSSOR is defined as 

\begin{equation} \label{eq:shuflled_sor}
\mathbf{y}^{j+1}=(I-\omega {P^j}^T(D_{P^j}-\omega L_{P^j})^{-1}P^jB)\mathbf{y}^{j}+\omega {P^j}^T(D_{P^j}-\omega L_{P^j})^{-1}P^j \boldsymbol{\chi}.
\end{equation}
Moreover, let us observe that after defining $Q_{P^j}:=\omega {P^j}^T(D_{P^j}-\omega L_{P^j})^{-1}P^j$,  \eqref{eq:shuflled_sor} can be written as a function of the random variables $P^1,\dots,P^j$, i.e., 

\begin{equation} \label{eq:shuflled_sor_compact}
\mathbf{y}^{j+1}= \prod_{\ell=0}^j (I-Q_{P^\ell}B)  \mathbf{y}^0 + \sum_{i=0}^j(\prod_{\ell=i+1}^j(I-Q_{P^{\ell}}))Q_{P^i}\boldsymbol{\chi},
\end{equation}
where we set $(\prod_{\ell=i+1}^j(I-Q_{P^{\ell}})):= I$ if $\ell+1>j$.

Before concluding this section, let us point out that the main idea connected with RSSOR is related to the fact that, although the spectral distribution of the matrix $PBP^T$ does not depend on any particular permutation matrix $P$, the spectrum of the lower triangular part $D_P-L_P$ does depend on it. As a result, also the spectral radius of the matrix $\mathcal{L}_{\omega}^P:=(I-\omega {P}^T(D_{P}-\omega L_{P})^{-1}PB)$ is affected by the particular choice of $P$. To further highlight the aforementioned dependence and to strengthen the intuition of the reader in this regard, in Figure~\ref{fig:sr_RSGS} we report  $\rho(\mathcal{L}_{1}^P)$ for all the permutation matrices $P$ and for $100$ randomly generated matrices of the form $B=R^TR+I \in \mathbb{R}^{7 \times 7}$ ($R$ is generated using the Matlab's function ``\texttt{rand}''). 

\begin{figure}[ht!]
	\centering
	\includegraphics[trim={6cm 7cm 1cm 7cm},clip,width=\textwidth]{./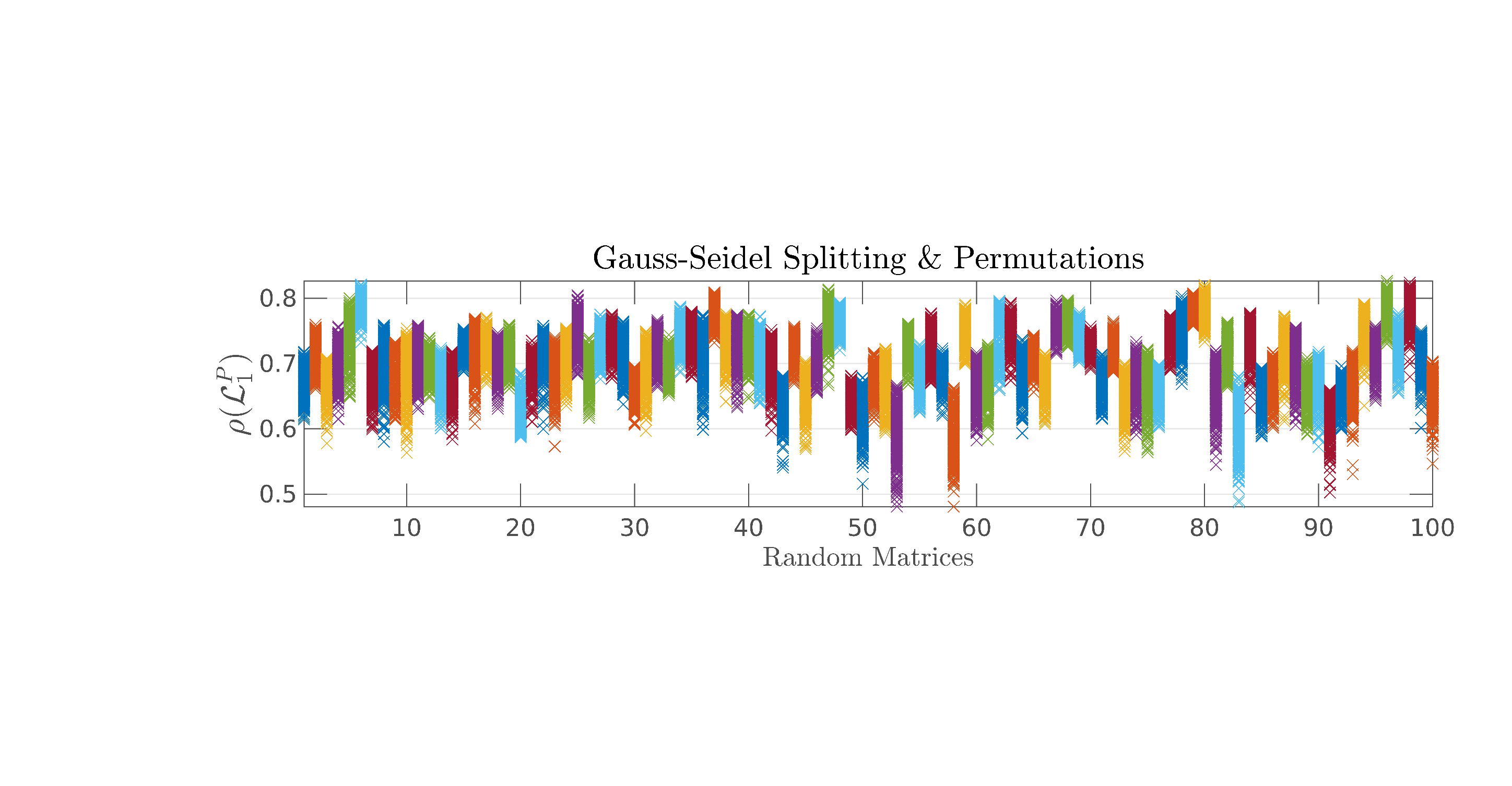}
	\caption{$\rho(\mathcal{L}_{1}^P)$ for $100$ random matrices of the form $B=R^TR+I \in \mathbb{R}^{7 \times 7}$.} \label{fig:sr_RSGS}
\end{figure}

 \subsubsection{Rate of Convergence of SOR}
 This section is based on \cite{MR3621829}. If $B$ is SPD and is partitioned as in \eqref{eq:SOR_SPLIT},
 the linear system in \eqref{eq:linear_system_SOR} can be transformed as 
 \begin{equation} \label{eq:normalized_diagonal_SOR}
 	D^{-1/2}BD^{-1/2}D^{1/2}\mathbf{y}=D^{-1/2}\boldsymbol{\chi}
 \end{equation}
($D$ is SPD since $B$ is SPD) and hence the coefficient matrix can be decomposed as
\begin{equation} \label{eq:normalized_diagonal_splitting_SOR}
	D^{-1/2}BD^{-1/2}=I-D^{-1/2}LD^{-1/2}-(D^{-1/2}LD^{-1/2})^T,
\end{equation}  
where $D^{-1/2}LD^{-1/2}$ and $(D^{-1/2}LD^{-1/2})^T$ are, respectively, strictly lower triangular and strictly upper triangular. For the above explained reasons, in this section we will suppose that $B=I-L-L^T$. 

Observe, moreover, that the SOR method applied to the system in \eqref{eq:normalized_diagonal_SOR} with the splitting \eqref{eq:normalized_diagonal_splitting_SOR} coincides exactly with \eqref{eq:SOR_NN2} and hence, the fact that in this section we suppose that the diagonal of $B$ is the identity, is expected to simplify the presentation.
 
The following Theorem~\ref{theo:SOR_NR} gives a precise bound for the rate of convergence of the SOR method:
 
 \begin{theorem}(\cite[Th. 1]{MR3621829}) \label{theo:SOR_NR}
 	Let $B$ a SPD matrix, then the SOR method \eqref{eq:SOR_energy_norm_conv} converges for $0<\omega < 2$ in the energy norm associated with $B$ according to
 	
 	\begin{equation}\label{eq:non_random_sor_rate}
 	\|\overline{\mathbf{y}}-\mathbf{y}^j\|_{B}^2\leq (1- \frac{(2-\omega)\omega \lambda_1(B)}{(1+\frac{1}{2}\lfloor \log_2(2d) \rfloor \omega \lambda_1(B))^2 {k}_2(B)})^j	\|\overline{\mathbf{y}}-\mathbf{y}^0\|_{B}^2.
 	\end{equation}
 \end{theorem}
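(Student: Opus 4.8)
The plan is to run the standard energy-norm contraction argument for a stationary iteration of the form $\mathbf{y}^{j+1}=\mathbf{y}^{j}+M_{\omega}^{-1}(\boldsymbol{\chi}-B\mathbf{y}^{j})$, where the only SOR-specific input is a bound on $\|M_{\omega}\|$ obtained through the spectral norm of the strictly-lower-triangular truncation operator. Working in the normalized setting $B=I-L-L^{T}$ arranged just before the statement, I would set $M_{\omega}:=\tfrac{1}{\omega}I-L$; since $L$ is strictly lower triangular, $M_{\omega}$ is lower triangular with diagonal $\tfrac{1}{\omega}\neq 0$, hence invertible, and $\omega(I-\omega L)^{-1}=M_{\omega}^{-1}$, so \eqref{eq:SOR_energy_norm_conv} reads $\mathbf{y}^{j+1}=\mathbf{y}^{j}+M_{\omega}^{-1}(\boldsymbol{\chi}-B\mathbf{y}^{j})$. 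Because $\overline{\mathbf{y}}$ solves $B\overline{\mathbf{y}}=\boldsymbol{\chi}$, the error $\mathbf{e}^{j}:=\overline{\mathbf{y}}-\mathbf{y}^{j}$ (local notation) satisfies $\mathbf{e}^{j+1}=\mathbf{e}^{j}-M_{\omega}^{-1}B\mathbf{e}^{j}$.

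The key algebraic fact is that $L+L^{T}=I-B$, so $M_{\omega}+M_{\omega}^{T}-B=\tfrac{2}{\omega}I-(L+L^{T})-B=\tfrac{2-\omega}{\omega}I$, which is positive definite precisely for $0<\omega<2$. Setting $\mathbf{w}:=M_{\omega}^{-1}B\mathbf{e}^{j}$ (so $B\mathbf{e}^{j}=M_{\omega}\mathbf{w}$) and expanding $\|\mathbf{e}^{j+1}\|_{B}^{2}=\|\mathbf{e}^{j}-\mathbf{w}\|_{B}^{2}$ using the symmetry of $B$ gives the one-step identity
$$\|\mathbf{e}^{j}\|_{B}^{2}-\|\mathbf{e}^{j+1}\|_{B}^{2}=\langle(M_{\omega}+M_{\omega}^{T}-B)\mathbf{w},\mathbf{w}\rangle=\frac{2-\omega}{\omega}\,\|M_{\omega}^{-1}B\mathbf{e}^{j}\|^{2}.$$
I would then bound the right-hand side from below by $\|M_{\omega}^{-1}B\mathbf{e}^{j}\|^{2}\geq\|B\mathbf{e}^{j}\|^{2}/\|M_{\omega}\|^{2}\geq\lambda_{n}(B)\,\|\mathbf{e}^{j}\|_{B}^{2}/\|M_{\omega}\|^{2}$, the last step being the left inequality in \eqref{eq:SPDmatrix norm}.

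The remaining and only non-routine step is to control $\|M_{\omega}\|\leq\tfrac{1}{\omega}+\|L\|$: here I invoke the dyadic estimate for the norm of the strictly-lower-triangular truncation of a matrix, $\|L\|\leq\tfrac{1}{2}\lfloor\log_{2}(2d)\rfloor\,\|B\|=\tfrac{1}{2}\lfloor\log_{2}(2d)\rfloor\,\lambda_{1}(B)$ (this is exactly the bound used in \cite{MR3621829}; one proves it by a recursive bisection of the index set which peels off, in $\lfloor\log_{2}(2d)\rfloor$ stages, a direct sum of off-diagonal sub-blocks each of spectral norm at most $\lambda_{1}(B)$). This gives $\|M_{\omega}\|^{2}\leq\tfrac{1}{\omega^{2}}\big(1+\tfrac{1}{2}\lfloor\log_{2}(2d)\rfloor\,\omega\lambda_{1}(B)\big)^{2}$, so that, using $\lambda_{n}(B)=\lambda_{1}(B)/k_{2}(B)$,
$$\|\mathbf{e}^{j}\|_{B}^{2}-\|\mathbf{e}^{j+1}\|_{B}^{2}\geq\frac{(2-\omega)\,\omega\,\lambda_{1}(B)}{\big(1+\tfrac{1}{2}\lfloor\log_{2}(2d)\rfloor\,\omega\lambda_{1}(B)\big)^{2}\,k_{2}(B)}\,\|\mathbf{e}^{j}\|_{B}^{2}.$$
A short check (reducing to the variable $x=\tfrac{\omega}{2}\lambda_{1}(B)$ and using that $x^{2}-2(1-\omega)x+1>0$ when $0<\omega<2$) shows the resulting contraction constant lies in $(0,1)$, so the stated bound, together with convergence for $0<\omega<2$, follows by rearranging and iterating over $j$. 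The main obstacle is thus isolated in the truncation-norm lemma; every other step is a direct computation organized around the identity $M_{\omega}+M_{\omega}^{T}-B=\tfrac{2-\omega}{\omega}I$.
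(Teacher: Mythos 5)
Your argument is correct: the Householder--John energy identity $\|\mathbf{e}^{j}\|_{B}^{2}-\|\mathbf{e}^{j+1}\|_{B}^{2}=\langle(M_{\omega}+M_{\omega}^{T}-B)\mathbf{w},\mathbf{w}\rangle=\tfrac{2-\omega}{\omega}\|\mathbf{w}\|^{2}$, combined with $\|\mathbf{w}\|\geq\|B\mathbf{e}^{j}\|/\|M_{\omega}\|$, the left inequality in \eqref{eq:SPDmatrix norm}, and the logarithmic bound on the norm of the strictly-lower-triangular truncation, reproduces \eqref{eq:non_random_sor_rate} exactly. The paper offers no proof of this statement --- it is imported verbatim from \cite[Th.~1]{MR3621829} --- and your derivation is precisely the argument of that reference, with the only non-elementary ingredient (the bound $\|L\|\leq\tfrac{1}{2}\lfloor\log_{2}(2d)\rfloor\lambda_{1}(B)$) correctly isolated and attributed; note also that the final "short check" that the contraction factor lies in $[0,1)$ is not strictly needed, since nonnegativity of $\|\mathbf{e}^{j+1}\|_{B}^{2}$ already forces it.
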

 
 The rate of convergence stated in \eqref{eq:non_random_sor_rate} depends on the dimension of the problem $d$ and this feature is not desirable for large scale problems. 
 
 One of the main advantages of the RSSOR consists in the fact that the expected error reduction factor is independent from the dimension of the problem, as stated in the following:
 
 \begin{theorem}(\cite[Th. 4]{MR3621829}) \label{th:oswald_theorem} 
 	The expected squared energy norm error of the RSSOR iteration converges exponentially with the bound
 	\begin{equation}\label{eq:random_SOR_RC}
 	\mathbb{E}(\|\overline{\mathbf{y}}-\mathbf{y}^j\|_{B}^2)\leq (1- \frac{(2-\omega)\omega \lambda_1(B)}{(1+\omega \lambda_1(B))^2 {k}_2(B)})^j	\|\overline{\mathbf{y}}-\mathbf{y}^0\|_{B}^2.
 	\end{equation}
 	for any $\omega \in (0,2)$.
 \end{theorem}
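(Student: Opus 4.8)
The plan is to run the energy-norm mechanism behind the deterministic bound of Theorem~\ref{theo:SOR_NR} and then to extract a \emph{dimension-free} contraction factor by averaging over the random permutation. First I would normalise as in \eqref{eq:normalized_diagonal_splitting_SOR}, so that $B=I-L-L^T$ and, for every permutation matrix $P$, the block-diagonal of $PBP^T$ is the identity, $PBP^T=I-L_P-L_P^T$. Put $\mathcal N_P:=\tfrac1\omega P^T(I-\omega L_P)P$ and $\mathbf e^j:=\overline{\mathbf y}-\mathbf y^j$; by \eqref{eq:shuflled_sor} the error satisfies $\mathbf e^{j+1}=(I-\mathcal N_{P^j}^{-1}B)\mathbf e^j$. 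The deterministic heart of the matter is the identity
\begin{equation*}
\mathcal N_P+\mathcal N_P^T-B=\tfrac{2-\omega}{\omega}I,\qquad\text{valid for every }P,
\end{equation*}
which follows from $P^T(I-\omega L_P)P+P^T(I-\omega L_P^T)P=\tfrac1\omega\big((2-\omega)I+\omega B\big)$ together with $L_P+L_P^T=I-PBP^T$. Writing $\mathbf w^j:=\mathcal N_{P^j}^{-1}B\mathbf e^j$ and expanding $\|\mathbf e^{j+1}\|_B^2=\|\mathbf e^j-\mathbf w^j\|_B^2$, the cross terms collapse via $B\mathbf e^j=\mathcal N_{P^j}\mathbf w^j$, and one obtains the exact one-step energy identity
\begin{equation*}
\|\mathbf e^{j+1}\|_B^2=\|\mathbf e^j\|_B^2-\tfrac{2-\omega}{\omega}\,\|\mathbf w^j\|^2 .
\end{equation*}

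Next I would condition on $\mathbf e^j$ (equivalently on $P^0,\dots,P^{j-1}$) and take expectation over $P^j$, which is uniform and independent of $\mathbf e^j$. It then suffices to establish, for every fixed vector $\mathbf e$, the one-sided bound
\begin{equation*}
\mathbb E_P\big(\|\mathcal N_P^{-1}B\mathbf e\|^2\big)\ \ge\ \frac{\omega^2\,\lambda_1(B)}{(1+\omega\lambda_1(B))^2\,{k}_2(B)}\,\|\mathbf e\|_B^2 ,
\end{equation*}
since inserting it into the one-step identity gives $\mathbb E\big(\|\mathbf e^{j+1}\|_B^2\mid P^0,\dots,P^{j-1}\big)\le q\,\|\mathbf e^j\|_B^2$ with $q=1-\tfrac{(2-\omega)\omega\lambda_1(B)}{(1+\omega\lambda_1(B))^2{k}_2(B)}$, and iterating — via the tower property, the independence of the $P^j$, and the deterministic initial guess $\mathbf y^0$ — yields \eqref{eq:random_SOR_RC}; here $q$ is strictly below $1$ for $\omega\in(0,2)$, so the decay is geometric. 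To reach the one-sided bound one uses $\|\mathcal N_P^{-1}B\mathbf e\|^2\ge\|B\mathbf e\|^2/\|\mathcal N_P\|^2$ and, from \eqref{eq:SPDmatrix norm}, $\|B\mathbf e\|^2\ge\lambda_n(B)\|\mathbf e\|_B^2=\tfrac{\lambda_1(B)}{{k}_2(B)}\|\mathbf e\|_B^2$, which (since $\|\mathcal N_P\|=\tfrac1\omega\|I-\omega L_P\|$) reduces everything to the estimate $\mathbb E_P\big(\|I-\omega L_P\|^{-2}\big)\ge(1+\omega\lambda_1(B))^{-2}$, i.e. to a bound on the \emph{expected} size of the random lower-triangular factor $I-\omega L_P$.

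This last estimate is the main obstacle. The naive route $\|I-\omega L_P\|\le 1+\omega\|L_P\|$ combined with the triangular-truncation bound $\|L_P\|\le\tfrac12\lfloor\log_2(2d)\rfloor\lambda_1(B)$ is exactly what produces the unwanted $d$-dependence in Theorem~\ref{theo:SOR_NR}, and it does not improve under a uniformly random $P$, because the worst-case operator norm of $L_P$ still carries the logarithmic factor with non-negligible probability; the averaging must therefore be performed \emph{before} passing to operator norms. The route I would pursue is to unfold one randomly permuted SOR sweep into its $n$ constituent block updates, track the resulting telescoping estimate in the $B$-norm (a multiplicative-Schwarz-type argument), and show that the ``interference'' between blocks — the term responsible for the logarithm — is controlled, in expectation over the uniform ordering, by the dimension-free quantity $\omega\lambda_1(B)$ rather than $\tfrac12\lfloor\log_2(2d)\rfloor\,\omega\lambda_1(B)$. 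Proving this dimension-free expected bound, with the precise constant $(1+\omega\lambda_1(B))^2$, is the technical core; once it is in place the argument closes as above. Full details are in \cite[Th.~4]{MR3621829}.
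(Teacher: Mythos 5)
First, a point of reference: the paper does not prove this statement at all --- it is quoted verbatim, with attribution, from \cite[Th.~4]{MR3621829}, so the only ``proof'' in the paper is the citation. Your sketch must therefore be judged against the argument in that reference. Your deterministic scaffolding is correct and is indeed the standard one: with $B=I-L-L^T$, the identity $\mathcal N_P+\mathcal N_P^T-B=\tfrac{2-\omega}{\omega}I$ holds for every $P$, and it yields the exact one-step energy identity $\|\mathbf e^{j+1}\|_B^2=\|\mathbf e^j\|_B^2-\tfrac{2-\omega}{\omega}\|\mathbf w^j\|^2$ with $\mathbf w^j=\mathcal N_{P^j}^{-1}B\mathbf e^j$; conditioning on the past and using independence of $P^j$ is also the right way to iterate.

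The genuine gap is in how you lower-bound $\mathbb E_P(\|\mathbf w\|^2)$. You reduce everything to $\mathbb E_P\bigl(\|I-\omega L_P\|^{-2}\bigr)\ge(1+\omega\lambda_1(B))^{-2}$ via $\|\mathbf w\|\ge\|B\mathbf e\|/\|\mathcal N_P\|$. This is an average of \emph{inverse squared operator norms}, i.e.\ you have passed to operator norms \emph{before} averaging --- precisely the move you yourself identify, one sentence earlier, as the source of the $\log_2(2d)$ factor in Theorem~\ref{theo:SOR_NR}. The reduced claim is not established anywhere in your sketch, and there is no reason to believe it: for matrices whose triangular truncation genuinely inflates the norm by a logarithmic factor under most orderings, $\|I-\omega L_P\|^{-2}$ is of order $(\omega\lambda_1(B)\log d)^{-2}$ for a non-negligible set of permutations, and its expectation then falls below $(1+\omega\lambda_1(B))^{-2}$ for large $d$. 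The argument in \cite{MR3621829} avoids the operator norm of $\mathcal N_P$ entirely: by Cauchy--Schwarz, $\|\mathbf e\|_B^4=\langle\mathcal N_P\mathbf w,\mathbf e\rangle^2\le\|\mathbf w\|^2\,\|\mathcal N_P^T\mathbf e\|^2$, so $\|\mathbf w\|^2\ge\|\mathbf e\|_B^4/\|\mathcal N_P^T\mathbf e\|^2$; one then applies Jensen's inequality and bounds the \emph{quadratic form} $\mathbb E_P\bigl(\|\mathcal N_P^T\mathbf e\|^2\bigr)$ for the fixed vector $\mathbf e$ by an explicit first- and second-moment computation over uniformly random permutations (e.g.\ $\mathbb E_P(P^TL_PP)=\tfrac12(I-B)$ and the analogous identity for $\mathbb E_P(P^TL_P^TL_PP)$), which is what produces the dimension-free constant $(1+\omega\lambda_1(B))^2$. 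Your closing appeal to a ``multiplicative-Schwarz-type'' unfolding is too vague to substitute for this computation, and you concede as much by deferring the technical core to the reference; as written, the proof is incomplete and the intermediate inequality you reduce to is the wrong (and likely false) target.
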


 As already pointed out, equation \eqref{eq:random_SOR_RC} does not exhibit any dependence on the dimension of the problem and, for this reason, the Randomly Shuffled versions of SOR should be considered for large scale problems. Moreover, the following corollary addresses the convergence of the iterates to the solution of the linear system:
 
 \begin{corollary} 
 	$\lim_{j \to \infty}\|\overline{\mathbf{y}}-\mathbf{y}^j\|_{B}^2=0$ a.s..
 \end{corollary}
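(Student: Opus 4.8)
The plan is to deduce the almost sure convergence from the geometric decay of the \emph{expected} squared error already established in Theorem~\ref{th:oswald_theorem}, combining Markov's inequality with the first Borel--Cantelli lemma (equivalently, with the ``summable expectations'' mechanism of \cite[Th.~2.1.3]{MR0455094} already used in the proof of Lemma~\ref{lem:epsol_iALM}). First I would set $X_j:=\|\overline{\mathbf{y}}-\mathbf{y}^j\|_B^2\geq 0$ and note that, since the initial guess $\mathbf{y}^0$ is deterministic, $X_0$ is a finite constant. Theorem~\ref{th:oswald_theorem} then gives $\mathbb{E}(X_j)\leq c^j X_0$ with $c:=1-\frac{(2-\omega)\omega\lambda_1(B)}{(1+\omega\lambda_1(B))^2 {k}_2(B)}$, and a one-line check ($k_2(B)\geq 1$ and $(2-\omega)\omega\lambda_1(B)<2\omega\lambda_1(B)\leq(1+\omega\lambda_1(B))^2$ for $\omega\in(0,2)$) shows $c\in(0,1)$, so that $\sum_{j\geq 0}\mathbb{E}(X_j)\leq X_0\sum_{j\geq 0}c^j<\infty$.

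From here there are two equivalent routes. The quicker one: by Tonelli's theorem $\mathbb{E}\big(\sum_{j\geq 0}X_j\big)=\sum_{j\geq 0}\mathbb{E}(X_j)<\infty$, hence $\sum_{j\geq 0}X_j<\infty$ a.s., and the general term of a convergent series tends to zero, so $X_j\to 0$ a.s. Alternatively, for any fixed $\varepsilon>0$ Markov's inequality gives $\mathbb{P}(X_j>\varepsilon)\leq \mathbb{E}(X_j)/\varepsilon\leq c^j X_0/\varepsilon$, whence $\sum_{j}\mathbb{P}(X_j>\varepsilon)<\infty$ and the first Borel--Cantelli lemma yields $\mathbb{P}(X_j>\varepsilon \text{ infinitely often})=0$; applying this with $\varepsilon=1/m$ for every $m\in\mathbb{N}$ and intersecting the resulting (countably many) full-measure events shows $\limsup_{j}X_j\leq 1/m$ a.s.\ for all $m$, i.e.\ $X_j\to 0$ a.s.

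I do not anticipate any real obstacle: the statement is a soft consequence of the exponential bound on $\mathbb{E}(X_j)$, and the only point requiring a little care is the passage from ``a.s.\ for each fixed $\varepsilon$'' to ``a.s.\ simultaneously'', which is handled by the countable reduction $\varepsilon=1/m$. The very same argument, with the expectation operator dropped and the random permutations replaced by a fixed ordering, recovers the deterministic convergence $\|\overline{\mathbf{y}}-\mathbf{y}^j\|_B\to 0$ for plain SOR from Theorem~\ref{theo:SOR_NR}, as remarked earlier.
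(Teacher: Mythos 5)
Your proposal is correct and follows essentially the same route as the paper: the paper's proof likewise observes that the geometric bound of Theorem~\ref{th:oswald_theorem} makes $\sum_{j}\mathbb{E}(\|\overline{\mathbf{y}}-\mathbf{y}^j\|_B^2)$ finite and then invokes \cite[Th.~2.1.3]{MR0455094}, which is precisely the ``summable expectations'' mechanism you spell out via Tonelli (or, equivalently, Markov plus Borel--Cantelli). Your version merely makes explicit the check that the contraction factor lies in $(0,1)$ and the internals of the cited lemma, which the paper leaves to the reference.
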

 \begin{proof}
 	Using \eqref{eq:random_SOR_RC},  we have that  $\sum_{j=0}^\infty \mathbb{E}(\|\overline{\mathbf{y}}-\mathbf{y}^j\|_{B}^2) < \infty$. Thesis follows applying \cite[Th. 2.1.3]{MR0455094}.
 	
% 	Using Markov inequality, for every $\delta >0$, we have $\mathbb{P}(\|\overline{\mathbf{y}}-\mathbf{y}^j\|_{B}^2 > \delta) \leq  \frac{\mathbb{E}(\|\overline{\mathbf{y}}-\mathbf{y}^j\|_{B}^2)}{\delta} $ and hence $\sum_{j=0}^\infty \mathbb{P}(\|\overline{\mathbf{y}}-\mathbf{y}^j\|_{B}^2 > \delta) < \infty$. Theis follows by Borel-Cantelli Lemma.
 \end{proof}

 \subsubsection{Using SOR in iALM} \label{sec:RSSOR&iALM}
 We are ready to analyse the behaviour of SOR method in the framework of the iALM \eqref{eq:IExactALM}. 
 In particular, we are going to present our results for the RSSOR method (see equation \eqref{eq:shuflled_sor}), but analogous techniques/results apply/hold for the non-randomized version \eqref{eq:SOR_energy_norm_conv}. This choice is mainly driven by the reasons of timeliness:
 in the next Section~\ref{sec:ADMM_iALM} we are able to interpret the recently introduced Randomized ADMM (RADMM) as a particular case of  iALM where the linear system \eqref{eq:inexat_min} is solved (inexactly) using RSSOR with $\omega=1$ (which will be denoted, in the following, as Randomly Shuffled Gauss-Seidel (RSGS)).  For this reason, in this section, we apply the results presented in Section \ref{sec:iALM} in the \textit{probabilistic} form considering $\{\mathbf{r}^k\}_k$ and $\{[\mathbf{x}^k, \boldsymbol{\mu}^k]^T\}_k$ as sequences of random variables.
 
 Of course, the same results as presented here hold, with \textit{simple} modifications, for the deterministic ADMM and the classical GS method. 
 
In order to use the rate of convergence stated in \eqref{eq:random_SOR_RC}, we write $H_{\beta}=D-L-L^T$ and transform the linear system in \eqref{eq:inexat_min} as follows:
 
 \begin{equation} \label{eq:diagonal_scaled}
 	D^{-1/2}H_{\beta}D^{-1/2}D^{1/2}\mathbf{x}=D^{-1/2}\boldsymbol{\chi}^k.
 \end{equation}
 Let us define $\widetilde{H}_{\beta}=D^{-1/2}H_{\beta}D^{-1/2}$,    $\widetilde{\boldsymbol{\chi}}^k:=D^{-1/2}\boldsymbol{\chi}^k$,  $\widetilde{\mathbf{x}}:=D^{1/2}\mathbf{x}$.

 Consider, moreover, the random variable
  $$\mathbb{E}(\|\widetilde{\overline{\mathbf{x}}}^{k+1}- \widetilde{\mathbf{x}}^{k+1, j}\|_{\widetilde{H}_{\beta}}^2| \begin{bmatrix}
 \mathbf{x}^k  \\ 
 \boldsymbol{\mu}^k 
 \end{bmatrix} ),$$
 where $\widetilde{H}_{\beta}\widetilde{\overline{\mathbf{x}}}^{k+1}=\widetilde{\boldsymbol{\chi}}^k$ and $\{\widetilde{\mathbf{x}}^{k+1, j}\}_j$ is the random sequence generated by RSSOR method in \eqref{eq:shuflled_sor}  to approximate $\widetilde{\overline{\mathbf{x}}}^{k+1}$, i.e., the solution of problem \eqref{eq:diagonal_scaled}.

The following Lemma \ref{lem:conditional_expectation} will be useful to state the main result of this section: 

\begin{lemma} \label{lem:conditional_expectation}
Let us suppose that the RSSOR in equation \eqref{eq:shuflled_sor}  is used for the solution of the linear system \eqref{eq:diagonal_scaled} with $\mathbf{y}^0=D^{1/2}\mathbf{x}^{k}=:\widetilde{\mathbf{x}}^{k+1, 0}$. 
If the random variable $(P^{k+1,0}, \dots, P^{k+1,j})$ is independent from $\{[\mathbf{x}^k, \boldsymbol{\mu}^k]^T\}_k$ for every $j,k \in \mathbb{N}$ (beyond the standard assumptions required on the $P^{k+1,j}$ by RSSOR), then
	\begin{equation}\label{eq:SORforiALM}
	\mathbb{E}(\|\widetilde{\overline{\mathbf{x}}}^{k+1}- \widetilde{\mathbf{x}}^{k+1, j}\|_{\widetilde{H}_{\beta}}) \leq (1- \frac{(2-\omega)\omega \lambda_1(\widetilde{H}_{\beta})}{(1+\omega \lambda_1(\widetilde{H}_{\beta}))^2 {k}_2(\widetilde{H}_{\beta})})^{j/2} \mathbb{E}(\|\widetilde{\overline{\mathbf{x}}}^{k+1}- \widetilde{\mathbf{x}}^{k+1, 0}\|_{\widetilde{H}_{\beta}}).
	\end{equation}
\end{lemma}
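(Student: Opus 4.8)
The plan is to derive \eqref{eq:SORforiALM} from the dimension-free RSSOR rate in Theorem~\ref{th:oswald_theorem} (equation \eqref{eq:random_SOR_RC}) by conditioning on the current iALM iterate and exploiting the stated independence of the permutation sequence $(P^{k+1,0},\dots,P^{k+1,j})$ from $\{[\mathbf{x}^k,\boldsymbol{\mu}^k]^T\}_k$. The key point is that, once we condition on $[\mathbf{x}^k,\boldsymbol{\mu}^k]^T$, the right-hand side $\widetilde{\boldsymbol{\chi}}^k=D^{-1/2}\boldsymbol{\chi}^k$, the exact solution $\widetilde{\overline{\mathbf{x}}}^{k+1}$, and the RSSOR starting point $\widetilde{\mathbf{x}}^{k+1,0}=D^{1/2}\mathbf{x}^k$ all become deterministic, while the permutation matrices driving the iteration remain distributed exactly as required by Theorem~\ref{th:oswald_theorem}; hence \eqref{eq:random_SOR_RC} applies verbatim to the conditional expectation.

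First I would write, using the independence hypothesis, that conditionally on $[\mathbf{x}^k,\boldsymbol{\mu}^k]^T$ the random sequence $\{\widetilde{\mathbf{x}}^{k+1,j}\}_j$ generated by \eqref{eq:shuflled_sor} is precisely a RSSOR run for the SPD linear system \eqref{eq:diagonal_scaled} with fixed data and fixed initial guess. Applying Theorem~\ref{th:oswald_theorem} to that conditional problem gives
\begin{equation*}
\mathbb{E}\Big(\|\widetilde{\overline{\mathbf{x}}}^{k+1}-\widetilde{\mathbf{x}}^{k+1,j}\|_{\widetilde{H}_{\beta}}^2\,\Big|\,\begin{bmatrix}\mathbf{x}^k\\ \boldsymbol{\mu}^k\end{bmatrix}\Big)\leq \Big(1-\tfrac{(2-\omega)\omega\lambda_1(\widetilde{H}_{\beta})}{(1+\omega\lambda_1(\widetilde{H}_{\beta}))^2 k_2(\widetilde{H}_{\beta})}\Big)^{j}\,\|\widetilde{\overline{\mathbf{x}}}^{k+1}-\widetilde{\mathbf{x}}^{k+1,0}\|_{\widetilde{H}_{\beta}}^2,
\end{equation*}
where the factor on the right is a deterministic constant depending only on $\widetilde{H}_{\beta}$. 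Next I would take expectations of both sides over $[\mathbf{x}^k,\boldsymbol{\mu}^k]^T$ using the tower property $\mathbb{E}(\cdot)=\mathbb{E}(\mathbb{E}(\cdot\,|\,[\mathbf{x}^k,\boldsymbol{\mu}^k]^T))$, which turns this into a bound on $\mathbb{E}(\|\widetilde{\overline{\mathbf{x}}}^{k+1}-\widetilde{\mathbf{x}}^{k+1,j}\|_{\widetilde{H}_{\beta}}^2)$ in terms of $\mathbb{E}(\|\widetilde{\overline{\mathbf{x}}}^{k+1}-\widetilde{\mathbf{x}}^{k+1,0}\|_{\widetilde{H}_{\beta}}^2)$. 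Finally I would pass from squared norms to norms: by Jensen's inequality (concavity of $\sqrt{\cdot}$) $\mathbb{E}(\|\cdot\|)\leq\sqrt{\mathbb{E}(\|\cdot\|^2)}$, and conversely $\sqrt{\mathbb{E}(\|\cdot\|^2)}\geq\mathbb{E}(\|\cdot\|)$ is the wrong direction, so I would instead apply the conditional Jensen step \emph{before} the outer expectation — take the square root of the conditional bound, use conditional Jensen on the left to get $\mathbb{E}(\|\cdots\|_{\widetilde{H}_{\beta}}\,|\,\cdot)\leq(\text{factor})^{j/2}\|\widetilde{\overline{\mathbf{x}}}^{k+1}-\widetilde{\mathbf{x}}^{k+1,0}\|_{\widetilde{H}_{\beta}}$ (the right side is already deterministic given the iterate), and then take the outer expectation, obtaining exactly \eqref{eq:SORforiALM}.

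The main obstacle, and the step that needs care, is the measure-theoretic bookkeeping around the conditioning: one must verify that the hypothesis ``$(P^{k+1,0},\dots,P^{k+1,j})$ independent of $\{[\mathbf{x}^k,\boldsymbol{\mu}^k]^T\}_k$'' genuinely licenses applying Theorem~\ref{th:oswald_theorem} to the conditional law, i.e. that conditionally the $P^{k+1,\ell}$ still satisfy the standing RSSOR assumptions (uniform on permutations, mutually independent, and independent of the current RSSOR guess $\widetilde{\mathbf{x}}^{k+1,\ell}$). Since $\widetilde{\mathbf{x}}^{k+1,\ell}$ is, by \eqref{eq:shuflled_sor_compact}, a measurable function of $[\mathbf{x}^k,\boldsymbol{\mu}^k]^T$ and $(P^{k+1,0},\dots,P^{k+1,\ell-1})$, the required conditional independence of $P^{k+1,\ell}$ from $\widetilde{\mathbf{x}}^{k+1,\ell}$ follows from the joint independence assumption; this is the point to state explicitly. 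The rest — applying \eqref{eq:random_SOR_RC}, the tower property, and Jensen — is routine, so the proof is short.
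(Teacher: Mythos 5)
Your proposal is correct and follows essentially the same route as the paper's proof: condition on $[\mathbf{x}^k,\boldsymbol{\mu}^k]^T$, invoke the independence of the permutations to apply the bound \eqref{eq:random_SOR_RC} to the conditional law (the paper formalizes this step via the Freezing Lemma after writing the squared error as a deterministic function of the permutations and the iterate, exactly the measurability observation you make from \eqref{eq:shuflled_sor_compact}), then apply conditional Jensen \emph{before} the outer expectation and finish with the tower property. Your explicit remark that Jensen must be applied at the conditional stage --- where the right-hand side is deterministic --- is precisely the point the paper's proof relies on.
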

\begin{proof}
Let us observe that, using \eqref{eq:shuflled_sor_compact}, we can write $$\|\widetilde{\overline{\mathbf{x}}}^{k+1}- \widetilde{\mathbf{x}}^{k+1, j}\|_{\widetilde{H}_{\beta}}^2=g((P^{k+1,0},\dots,P^{k+1,j-1}), \begin{bmatrix}
\mathbf{x}^k  \\
\boldsymbol{\mu}^k
\end{bmatrix}),$$ where $g$ is a deterministic function.

Using the fact that, if the random variable $Y$ is independent from $X$ (see \textit{Freezing Lemma}, \cite[Example 5.1.5]{MR2722836}), it holds
$$\mathbb{E}(g(Y, X)|X)= \mathbb{E}(g(Y,x))_{|x=X},$$ and using \eqref{eq:random_SOR_RC}, we have

\begin{equation*} 
\mathbb{E}(\|\widetilde{\overline{\mathbf{x}}}^{k+1}- \widetilde{\mathbf{x}}^{k+1, j}\|_{\widetilde{H}_{\beta}}^2| \begin{bmatrix}
\mathbf{x}^k  \\ 
\boldsymbol{\mu}^k 
\end{bmatrix} ) \leq (1- \frac{(2-\omega)\omega \lambda_1(\widetilde{H}_{\beta})}{(1+\omega \lambda_1(\widetilde{H}_{\beta}))^2 {k}_2(\widetilde{H}_{\beta})})^j \|\widetilde{\overline{\mathbf{x}}}^{k+1}- \widetilde{\mathbf{x}}^{k+1, 0}\|_{\widetilde{H}_{\beta}}^2 \hbox{ a.s..}
\end{equation*}
Moreover, using the conditional Jensen's Inequality in the left hand-side of the previous equation (see \cite[Th. 34.4]{MR2893652}) and then passing the square root,  we have
\begin{equation*} 
\mathbb{E}(\|\widetilde{\overline{\mathbf{x}}}^{k+1}- \widetilde{\mathbf{x}}^{k+1, j}\|_{\widetilde{H}_{\beta}}| \begin{bmatrix}
\mathbf{x}^k  \\ 
\boldsymbol{\mu}^k 
\end{bmatrix} ) \leq (1- \frac{(2-\omega)\omega \lambda_1(\widetilde{H}_{\beta})}{(1+\omega \lambda_1(\widetilde{H}_{\beta}))^2 {k}_2(\widetilde{H}_{\beta})})^{j/2} \|\widetilde{\overline{\mathbf{x}}}^{k+1}- \widetilde{\mathbf{x}}^{k+1, 0}\|_{\widetilde{H}_{\beta}} \hbox{ a.s..}
\end{equation*}
Thesis follows considering the expectation on both sides of the above inequality and using the properties of the conditional expectation \cite[Th. 34.4]{MR2893652}.
\end{proof}

% 
% where $\widetilde{H}_{\beta}\widetilde{\overline{\mathbf{x}}}^{k+1}=\widetilde{\boldsymbol{\chi}}^k$ and $\widetilde{\mathbf{x}}^{k+1, j}$ is the random sequence generated by RSSOR to approximate $\widetilde{\overline{\mathbf{x}}}^{k+1}$, i.e., the solution of problem \eqref{eq:diagonal_scaled}.
% Applying  the RSSOR method in \eqref{eq:shuflled_sor} for the solution of the linear system \eqref{eq:diagonal_scaled} and using \eqref{eq:random_SOR_RC}, we have for every $k \in \mathbb{N}$,
We are now ready to state the following Theorem~\ref{theo:sorNONincreasingIts} which summarizes the properties of the iALM  in \eqref{eq:IExactALM} when each sub-problem is solved using RSSOR:

 \begin{theorem} \label{theo:sorNONincreasingIts}
 Let $\{\eta^k\}_k=R^{k+1}$ with $R>\rho_{\beta}$.
 	Define 
 	\begin{equation}\label{eq:min_RRSOR}
 		\overline{j}^k:= \min\{j \;:\; \mathbb{E}(\|\mathbf{r}^{k,j}\|) \leq \eta^k \},
 	\end{equation}
 where $\{\mathbf{r}^{k,j}:=H_\beta \mathbf{x}^{k+1,j}-\boldsymbol{\chi}^k\}_j$ is the sequence of random residuals generated by RSSOR.
 Then, there exists $ \overline{j} \in \mathbb{N}$ such that 
 $\overline{j} \geq \overline{j}^k$ for all $k$.
%Define $\{j_k\}_k$ as the sequence of random variables giving the number of RSSOR iterations sufficient to ensure ${\mathbb{E}(\|\mathbf{r}^{k,j_k}\|)} \leq \eta^k$.
% Then there exists a constant $k_0 \in \mathbb{N}$ and ${C}>0$ such that, 
% 	$$\mathbb{E}(j_k) \geq \log_{(1- \frac{(2-\omega)\omega \lambda_1(\widetilde{H}_{\beta})}{(1+\omega \lambda_1(\widetilde{H}_{\beta}))^2 {k}_2(\widetilde{H}_{\beta})})}{C}  $$ 
% 	for all  $k\geq k_0$. 
Moreover, an expected $\varepsilon$~-~accurate primal-dual solution of problem \eqref{eq:QP_problem_ADMM} can be obtained in $O(\log_{R}\varepsilon)$ iALM iterations.  
 \end{theorem}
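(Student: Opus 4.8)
The plan is to follow the same route as the proof of Theorem~\ref{theo:cgNONincreasingIts}, with the CG error-reduction estimate \eqref{eq:CGforiALM} replaced by the RSSOR-in-expectation estimate \eqref{eq:SORforiALM} of Lemma~\ref{lem:conditional_expectation}, paying attention to the symmetric diagonal rescaling $\widetilde{H}_\beta=D^{-1/2}H_\beta D^{-1/2}$ of \eqref{eq:diagonal_scaled} and to the fact that all the iterates are now random variables. Write $\sigma:=1-\frac{(2-\omega)\omega\lambda_1(\widetilde{H}_\beta)}{(1+\omega\lambda_1(\widetilde{H}_\beta))^2\,k_2(\widetilde{H}_\beta)}\in[0,1)$ for the per-iteration factor appearing in \eqref{eq:SORforiALM}; the degenerate case $\sigma=0$ (an exact inner solve in one step) being trivial, one may assume $\sigma\in(0,1)$.

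First I would turn \eqref{eq:SORforiALM} into a bound on $\mathbb{E}(\|\mathbf{r}^{k,j}\|)$. Since $\mathbf{r}^{k,j}=H_\beta\mathbf{x}^{k+1,j}-\boldsymbol{\chi}^k=D^{1/2}\widetilde{H}_\beta(\widetilde{\mathbf{x}}^{k+1,j}-\widetilde{\overline{\mathbf{x}}}^{k+1})$, and since $\widetilde{\mathbf{x}}^{k+1,0}=D^{1/2}\mathbf{x}^k$ together with $\widetilde{H}_\beta\widetilde{\overline{\mathbf{x}}}^{k+1}=\widetilde{\boldsymbol{\chi}}^k=D^{-1/2}\boldsymbol{\chi}^k$ gives $\widetilde{H}_\beta(\widetilde{\overline{\mathbf{x}}}^{k+1}-\widetilde{\mathbf{x}}^{k+1,0})=-D^{-1/2}(H_\beta\mathbf{x}^k-\boldsymbol{\chi}^k)$, the norm equivalences \eqref{eq:SPDmatrix norm}--\eqref{eq:SPDmatrix norm2} applied with $B=\widetilde{H}_\beta$, $B=D$ and $B=D^{-1}$ give, a.s.,
\begin{equation*}
\|\mathbf{r}^{k,j}\|\le \sqrt{\lambda_1(D)\lambda_1(\widetilde{H}_\beta)}\;\|\widetilde{\mathbf{x}}^{k+1,j}-\widetilde{\overline{\mathbf{x}}}^{k+1}\|_{\widetilde{H}_\beta},\qquad \|\widetilde{\overline{\mathbf{x}}}^{k+1}-\widetilde{\mathbf{x}}^{k+1,0}\|_{\widetilde{H}_\beta}\le \frac{\|H_\beta\mathbf{x}^k-\boldsymbol{\chi}^k\|}{\sqrt{\lambda_n(D)\lambda_n(\widetilde{H}_\beta)}}.
\end{equation*}
Taking expectations, inserting \eqref{eq:SORforiALM} in between, and collecting the resulting constants into a factor $C_3:=(k_2(D)\,k_2(\widetilde{H}_\beta))^{1/2}$ (depending only on $H$, $A$, $\beta$, $\omega$), one arrives at $\mathbb{E}(\|\mathbf{r}^{k,j}\|)\le C_3\,\sigma^{j/2}\,\mathbb{E}(\|H_\beta\mathbf{x}^k-\boldsymbol{\chi}^k\|)$.

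Then I would bound $\mathbb{E}(\|H_\beta\mathbf{x}^k-\boldsymbol{\chi}^k\|)$ uniformly in $k$ up to the factor $R^k$, exactly as in \eqref{eq:prev_res_newrhs}: the second equation of \eqref{eq:IExactALM} yields $H_\beta\mathbf{x}^k-\boldsymbol{\chi}^k=\mathbf{r}^{k-1}+\beta A^T(A\mathbf{x}^k-\mathbf{b})$, hence $\mathbb{E}(\|H_\beta\mathbf{x}^k-\boldsymbol{\chi}^k\|)\le\mathbb{E}(\|\mathbf{r}^{k-1}\|)+\|A^T\|\,\mathbb{E}(\|\beta(A\mathbf{x}^k-\mathbf{b})\|)\le R^k+\|A^T\|L R^k$, where the bound on the feasibility term uses the probabilistic form \eqref{eq:limit_finitness} of Corollary~\ref{cor:limit_finitness}, which applies since $R>\rho_\beta$. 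Combining the two steps gives $\mathbb{E}(\|\mathbf{r}^{k,j}\|)\le C_3(1+\|A^T\|L)\,\sigma^{j/2}R^k$, so the stopping condition $\mathbb{E}(\|\mathbf{r}^{k,j}\|)\le\eta^k=R^{k+1}$ is met as soon as $\sigma^{j/2}\le R/(C_3(1+\|A^T\|L))$, i.e.\ as soon as $j\ge\overline{j}:=\big\lceil 2\log_\sigma\big(R/(C_3(1+\|A^T\|L))\big)\big\rceil$, a quantity independent of $k$; hence $\overline{j}\ge\overline{j}^k$ for all $k$. Finally, since $\mathbb{E}(\|\mathbf{r}^k\|)\le R^{k+1}$ for every $k$, the hypotheses of Lemma~\ref{lem:epsol_iALM} hold and, in the regime $\rho_\beta\le R$, that lemma delivers an expected $\varepsilon$-accurate primal-dual solution in $O(\log_R\varepsilon)$ iALM iterations.

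The main obstacle is the chain of norm estimates in the first step: one must pass the residual correctly through the rescaling $\widetilde{H}_\beta=D^{-1/2}H_\beta D^{-1/2}$ and check that the conditional-expectation bound of Lemma~\ref{lem:conditional_expectation}—which already absorbs the Freezing-Lemma/independence argument—may be composed with the pointwise norm equivalences \eqref{eq:SPDmatrix norm}--\eqref{eq:SPDmatrix norm2} before expectations are taken. Once this is settled, the remainder is a line-by-line transcription of the proof of Theorem~\ref{theo:cgNONincreasingIts}, with $\sigma^{1/2}$ in the role of the CG contraction factor $(\sqrt{k_2(H_\beta)}-1)/(\sqrt{k_2(H_\beta)}+1)$.
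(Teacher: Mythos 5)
Your proposal is correct and follows essentially the same route as the paper's own proof: the RSSOR expectation bound of Lemma~\ref{lem:conditional_expectation} is converted into a bound on $\mathbb{E}(\|\mathbf{r}^{k,j}\|)$ via the norm equivalences \eqref{eq:SPDmatrix norm}--\eqref{eq:SPDmatrix norm2} (picking up the factor $\sqrt{k_2(D)k_2(\widetilde{H}_\beta)}$), the right-hand side is controlled uniformly in $k$ through \eqref{eq:prev_res_newrhs} and Corollary~\ref{cor:limit_finitness}, and the complexity claim is delegated to Lemma~\ref{lem:epsol_iALM}. Your explicit tracking of the rescaling $D^{1/2}\widetilde{H}_\beta(\widetilde{\mathbf{x}}^{k+1,j}-\widetilde{\overline{\mathbf{x}}}^{k+1})$ is, if anything, slightly cleaner than the paper's presentation.
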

 
 \begin{proof}
 Using \eqref{eq:SPDmatrix norm} in \eqref{eq:SORforiALM} and since the expectation is a linear function, we have
 	
 	\begin{equation*}
 		\mathbb{E}(\|\widetilde{H}_{\beta}\widetilde{{\mathbf{x}}}^{k+1,j}-\widetilde{\boldsymbol{\chi}}^k \|) \leq (1- \frac{(2-\omega)\omega \lambda_1(\widetilde{H}_{\beta})}{(1+\omega \lambda_1(\widetilde{H}_{\beta}))^2 {k}_2(\widetilde{H}_{\beta})})^{j/2} \sqrt{k_2({\widetilde{H}_{\beta}})}
 		\mathbb{E}(\|\widetilde{H}_{\beta}\widetilde{{\mathbf{x}}}^{k+1,0}-\widetilde{\boldsymbol{\chi}}^k\|)
 		\end{equation*}
 and hence, using \eqref{eq:SPDmatrix norm2},
\begin{equation*}
\mathbb{E}(\| {H}_{\beta} {{\mathbf{x}}}^{k+1,j}- {\boldsymbol{\chi}}^k \|) \leq (1- \frac{(2-\omega)\omega \lambda_1(\widetilde{H}_{\beta})}{(1+\omega \lambda_1(\widetilde{H}_{\beta}))^2 {k}_2(\widetilde{H}_{\beta})})^j \sqrt{k_2({\widetilde{H}_{\beta}})k_2(D^{-1})}
\mathbb{E}(\| {H}_{\beta}{{\mathbf{x}}}^{k}- {\boldsymbol{\chi}}^k\|),
\end{equation*}
where we defined ${{\mathbf{x}}}^{k+1,j}:=D^{-1/2}\widetilde{{\mathbf{x}}}^{k+1,j}$ for $j \geq 1$. 
If in the above equation we use the definition of ${{\mathbf{r}}}^{k+1,j}$,  we have 
\begin{equation*}
\mathbb{E}(\|{{\mathbf{r}}}^{k+1,j}\| ) \leq (1- \frac{(2-\omega)\omega \lambda_1(\widetilde{H}_{\beta})}{(1+\omega \lambda_1(\widetilde{H}_{\beta}))^2 {k}_2(\widetilde{H}_{\beta})})^{j/2} \sqrt{k_2({\widetilde{H}_{\beta}})k_2(D^{-1})}
\mathbb{E}(\| {H}_{\beta} {{\mathbf{x}}}^{k}- {\boldsymbol{\chi}}^k\|^2),
\end{equation*}
%Moreover, since,  ${(\mathbb{E}(\|\mathbf{r}^{k,j}\|)^2} \leq {\mathbb{E}(\|\mathbf{r}^{k,j}\|^2)}$ (Jensen's Inequality, see \cite[Th. 34.4]{MR2893652}), we have
%\begin{equation*}
%\mathbb{E}(\|{{\mathbf{r}}}^{k+1,j}\|)^2 \leq (1- \frac{(2-\omega)\omega \lambda_1(\widetilde{H}_{\beta})}{(1+\omega \lambda_1(\widetilde{H}_{\beta}))^2 {k}_2(\widetilde{H}_{\beta})})^j k_2({\widetilde{H}_{\beta}})k_2(D^{-1})
%\mathbb{E}(\| {H}_{\beta} {{\mathbf{x}}}^{k}- {\boldsymbol{\chi}}^k\|)^2,
%\end{equation*}
%and hence, using the tower property (see \cite[Th. 34.4]{MR2893652}),
%\begin{equation*}
%\mathbb{E}(\|{{\mathbf{r}}}^{k+1,j}\|) \leq (1- \frac{(2-\omega)\omega \lambda_1(\widetilde{H}_{\beta})}{(1+\omega \lambda_1(\widetilde{H}_{\beta}))^2 {k}_2(\widetilde{H}_{\beta})})^{j/2} \sqrt{k_2({\widetilde{H}_{\beta}})k_2(D^{-1})}
%\mathbb{E}(\| {H}_{\beta} {{\mathbf{x}}}^{k}- {\boldsymbol{\chi}}^k\|).
%\end{equation*}
and hence, defining 
\begin{equation*}
j^k := \lceil \log_{(1- \frac{(2-\omega)\omega \lambda_1(\widetilde{H}_{\beta})}{(1+\omega \lambda_1(\widetilde{H}_{\beta}))^2 {k}_2(\widetilde{H}_{\beta})})} \frac{2\eta^k}{\sqrt{k_2(\widetilde{H}_{\beta})k_2(D^{-1})} \mathbb{E}({\|H_{\beta}  {\mathbf{x}}^{k}-\boldsymbol{\chi}^k\|})} \rceil, 
\end{equation*}
it holds ${\mathbb{E}(\|\mathbf{r}^{k,j^k}\|)} \leq\eta^k$. 
Reasoning now as in Theorem~\ref{theo:cgNONincreasingIts}, we have 
\begin{equation*}
\mathbb{E}(\|H_{\beta}\mathbf{x}^{k}-\boldsymbol{\chi}^k\|)=\mathbb{E}(\|\mathbf{r}^{k-1}+\beta A^T(A\mathbf{x}^k-\mathbf{b}))\|\leq \mathbb{E}(\|\mathbf{r}^{k-1}\|)+\|A^T\|\mathbb{E}( \|\beta(A\mathbf{x}^k-\mathbf{b})\|),
\end{equation*}
and hence using the hypothesis $\eta^k=R^{k+1}$ and equation \eqref{eq:limit_finitness}, we are able to state the existence of a constant $C>0$ such that
\begin{equation*}
{\frac{2  R^{k+1}}{{\sqrt{k_2(\widetilde{H}_{\beta})k_2(D^{-1})}} \mathbb{E}(\|H_{\beta}\mathbf{x}^{k}-\boldsymbol{\chi}^k\|)}} \geq C \hbox{ for all } k.
\end{equation*}
 We obtain
\begin{equation}\label{eq:SORiterEstimate}
\overline{j}:=\lceil \log_{(1- \frac{(2-\omega)\omega \lambda_1(\widetilde{H}_{\beta})}{(1+\omega \lambda_1(\widetilde{H}_{\beta}))^2 {k}_2(\widetilde{H}_{\beta})})}  C \rceil \geq j^k \hbox{ for all } k.
\end{equation}
From \eqref{eq:SORiterEstimate}, we obtain the first part of the statement observing that $j^k \geq \overline{j}^k$ for all $k$.
%\begin{equation*}
%\mathbb{E}(j_k) \geq \log_{{(1- \frac{(2-\omega)\omega \lambda_1(\widetilde{H}_{\beta})}{(1+\omega \lambda_1(\widetilde{H}_{\beta}))^2 {k}_2(\widetilde{H}_{\beta})})}} C
%\end{equation*}
%iterations are sufficient to guarantee ${\mathbb{E}(\|\mathbf{r}^{k,j}\|)}\leq \eta^k$ for all $k \geq k_0$. 
The last part of the statement follows observing that with this choice of $\eta^k$ the hypotheses of Lemma~\ref{lem:epsol_iALM} are satisfied.
\end{proof}

In the upper panels of  Figure~\ref{fig:inexact_RSGS} we report the quantities analysed in the proof of Lemma~\ref{lem:epsol_iALM} (the legend is consistent with the notation used in Lemma~\ref{lem:epsol_iALM} { except for the fact that we report $\overline{C} \equiv \overline{C}/M$ }). The expectations $\mathbb{E}(\|A\mathbf{x}^k-\mathbf{b}\|)$,  $\mathbb{E}(\|H\mathbf{x}^k + \mathbf{g} -A^T \boldsymbol{\mu}^k\|)$ and $\mathbb{E}(\|\mathbf{d}^k\|)$
are approximated using the empirical mean over $15$ iALM simulations, whereas, for each fixed $k$ and $j$, $\mathbb{E}(\|\mathbf{r}^{k,j}\|)$ is approximated using the empirical mean of $\mathbb{E}(\mathbb{E}(\|\mathbf{r}^{k,j}\||\begin{bmatrix}
\mathbf{x}^k \\ \boldsymbol{\mu}^k
\end{bmatrix}))$  over $15$ trajectories for $[\mathbf{x}^k, \boldsymbol{\mu}^k]^T$ and $15$ simulations of the RSGS step. In the lower panels, we report, for each iALM step and for each simulation, the box-plots of the obtained $\overline{j}^k$ (see equation~\eqref{eq:min_RRSOR}). As Theorem~\ref{theo:sorNONincreasingIts} states and Figure~\ref{fig:inexact_RSGS} confirms, $\overline{j}^k$ shows a \textit{bounded-from-above} behaviour for all the iALM iterations (the choice of the parameters $\beta$ and $R$ is the same as that in Figure~\ref{fig:inexact_CG}).

\begin{figure}[ht!]
	\centering
	\includegraphics[width=\textwidth]{./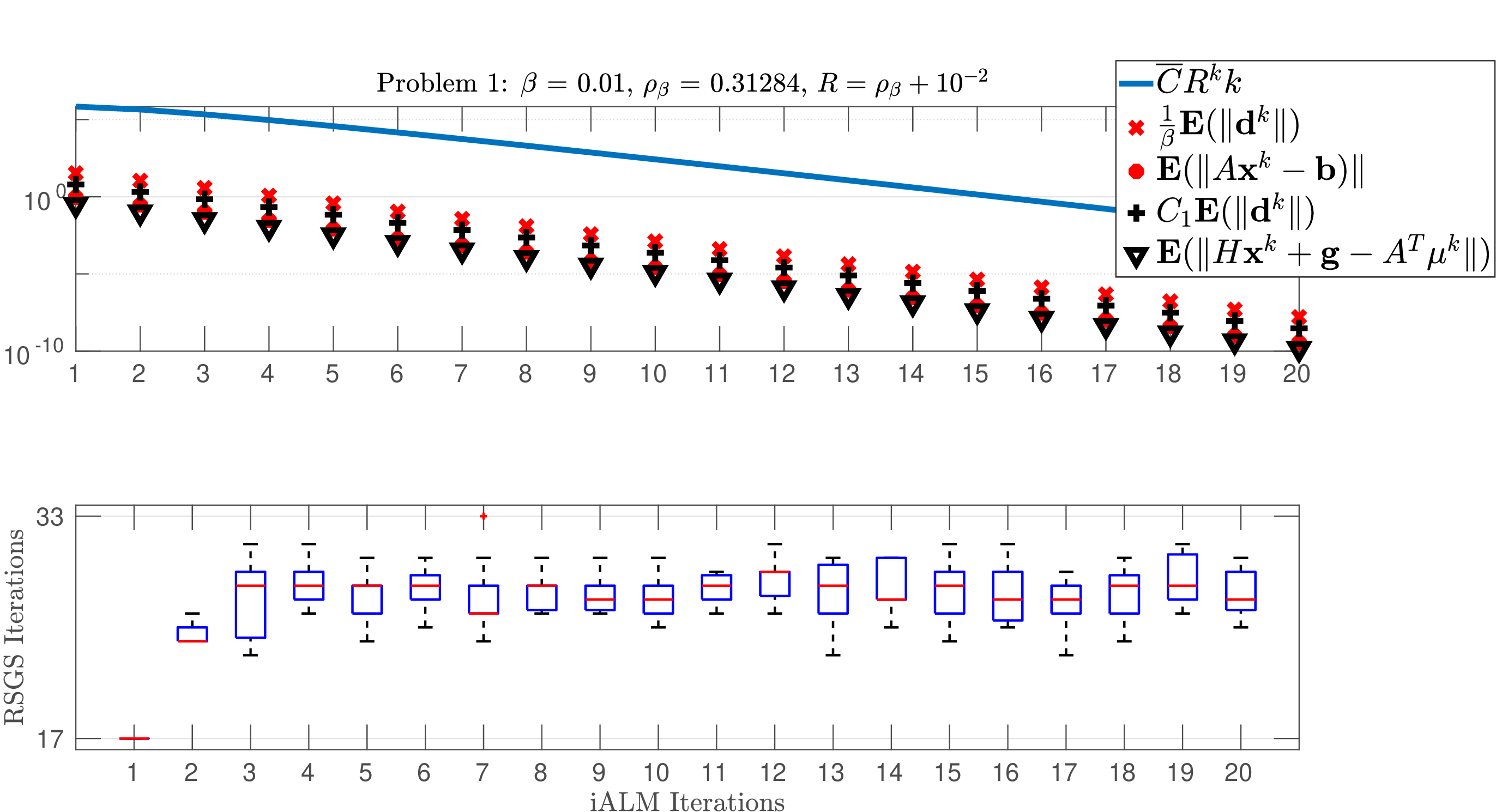}\\
\vspace{0.3cm}
	\includegraphics[width=\textwidth]{./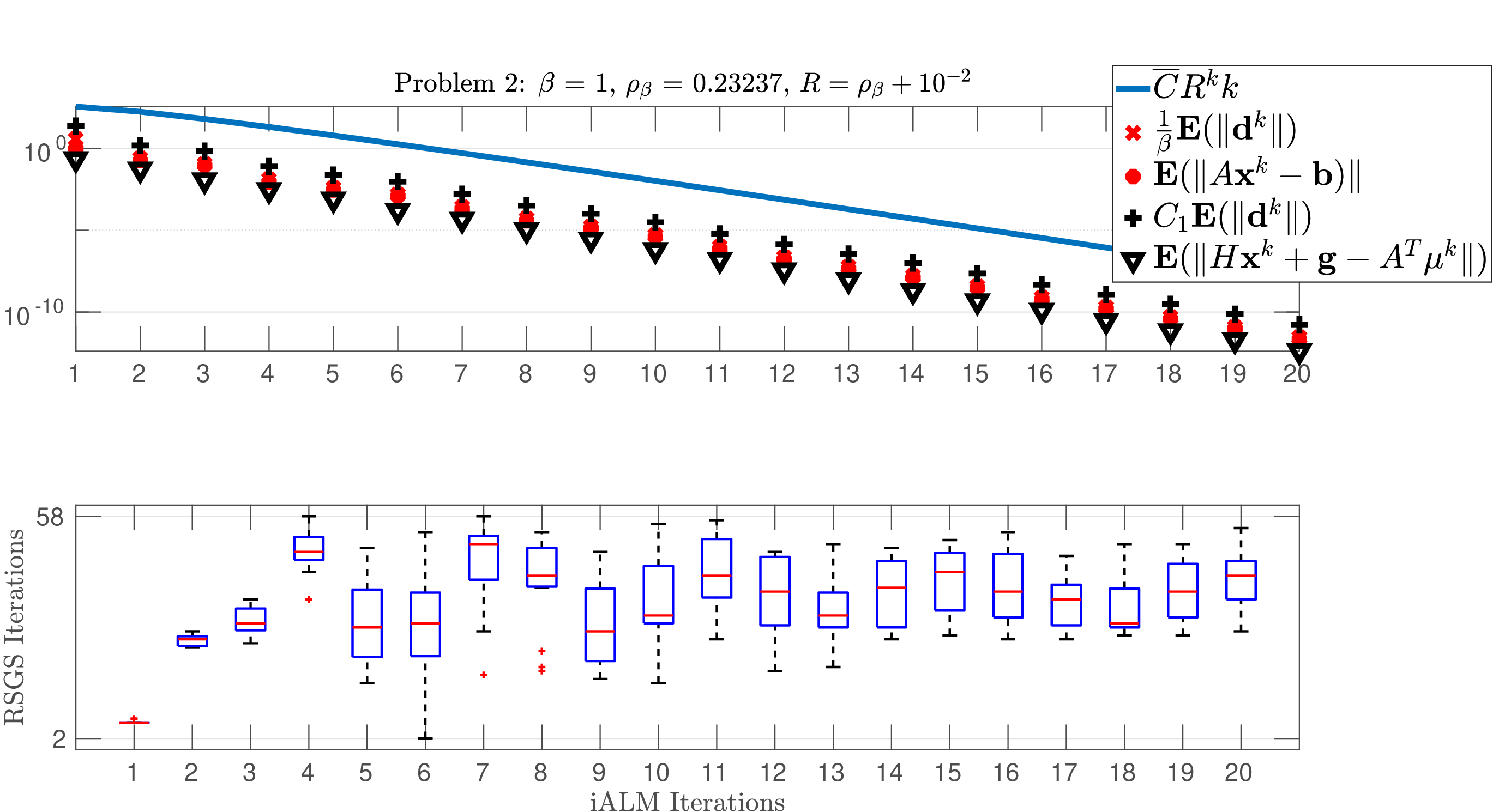}
	\caption{Upper panels: behaviour of the quantities analysed in Lemma~\ref{lem:epsol_iALM} (logarithmic scale on $y$-axis) approximated using the empirical mean over $15$ simulations of iALM. Lower panels: box-plots of the  $\overline{j}^k$'s (see equation~\eqref{eq:min_RRSOR}) obtained in each simulation of iALM when RSSOR is used for the solution of \eqref{eq:inexat_min} using $\{\eta^k\}_k$ and $\{\mathbf{x}^{k+1, 0}\}_k$ as in Theorem~\ref{theo:sorNONincreasingIts}. } \label{fig:inexact_RSGS}
\end{figure}

\section{Interpreting (Random)ADMM as an iALM} \label{sec:ADMM_iALM}
Given a block partition of $\mathbf{x}$, i.e., $\mathbf{x}=[\mathbf{x}_{d_1}, \dots, \mathbf{x}_{d_n}]^T$ with $d_1+\dots+d_n=d$, the $n$-block ADMM (see \cite{MR3439797} and references therein) is defined as

\begin{equation} \label{eq:ADMM_general}
\left\{
\begin{array}{rc}
&\mathbf{x}_{d_1}^{k+1}:= \arg \min_{\mathbf{x}_{d_1} \in \mathbf{R}^{d_1}} \mathcal{L}_\beta([\mathbf{x}_{d_1}, \mathbf{x}^k_{d_2},\dots, \mathbf{x}^k_{d_n}]^T,\boldsymbol{\mu}^k),   \\
&\vdots    \\
&\mathbf{x}_{d_n}^{k+1}:= \arg \min_{\mathbf{x}_{d_n} \in \mathbf{R}^{d_n}} \mathcal{L}_\beta([\mathbf{x}_{d_1}^{k+1}, \mathbf{x}_{d_2}^{k+1},\dots, \mathbf{x}_{d_n}]^T,\boldsymbol{\mu}^k),  \\
\vspace{0.1cm} \\
& \boldsymbol{\mu}^{k+1}:=\boldsymbol{\mu}^k-\beta(A\mathbf{x}^{k+1}-\mathbf{b}).
\end{array}
\right.
\end{equation}

If we apply the iterative method in \eqref{eq:ADMM_general} to solve problem \eqref{eq:QP_problem_ADMM}, splitting $H_{\beta}$ as $H_{\beta}=D-L-L^T$, it is possible to re-write \eqref{eq:ADMM_general} in compact form (see \cite{MR3904457, MR4066996}):
{\footnotesize
	\begin{equation} \label{ADMM_quadrtatic1}
	\begin{bmatrix}
	\mathbf{x}^{k+1} \\
	\boldsymbol{\mu}^{k+1}
	\end{bmatrix}=\underbrace {\begin{bmatrix}
	D-L & 0 \\
	\beta A & I
	\end{bmatrix}^{-1}\begin{bmatrix}
	L^T & A^T \\
	0 & I
	\end{bmatrix}}_{=:G^{ADMM}}\begin{bmatrix}
	\mathbf{x}^{k} \\
	\boldsymbol{\mu}^{k}
	\end{bmatrix}+
	\begin{bmatrix}
	D-L & 0 \\
	\beta A & I
	\end{bmatrix}^{-1}
	\begin{bmatrix}
	\beta A^T \mathbf{b}-\mathbf{g} \\  
	\beta \mathbf{b}
	\end{bmatrix}
	\end{equation}}
Since equation \eqref{ADMM_quadrtatic1} can be written alternatively as

\begin{equation} \label{ADMM_quadratic_implementation1}
\left\{
\begin{array}{rc}
& \mathbf{x}^{k+1}= (D-L)^{-1}L^T\mathbf{x}^{k} +(D-L)^{-1}(A^T\boldsymbol{\mu}^{k}+\beta A^T\mathbf{b}-\mathbf{g}) \\ 
& \boldsymbol{\mu}^{k+1}:=\boldsymbol{\mu}^k-\beta(A\mathbf{x}^{k+1}-\mathbf{b}),
\end{array}
\right.
\end{equation}
we can observe that the first equation in \eqref{ADMM_quadratic_implementation1} is precisely one step of the SOR method with $\omega=1$ (see equation \eqref{eq:SOR_NN}), i.e., ADMM performs exactly one GS iteration for the solution of the linear system $H_{\beta}\mathbf{x}=A^T\boldsymbol{\mu}^{k}+\beta A^T\mathbf{b}-\mathbf{g}$. Let us point out that in \cite{MR3439797} it has been proved that the $n$-block extension of ADMM is not always convergent since there exist examples where the spectral radius of $G^{ADMM}$ in equation  \eqref{ADMM_quadrtatic1} satisfies $\rho(G^{ADMM})>1$. The analysis performed in Sections~\ref{sec:iALM} and \ref{sec:SLS} reveals a simple strategy to remedy this: performing \textit{more} steps of the GS iteration to fulfil the requirements needed on the residuals  will ensure convergence. Indeed,  as proved in Section~\ref{sec:SLS} (deterministic case), 
a constant number of iterations of SOR  per iALM-step is sufficient to guarantee that the produced residuals satisfy the sufficient conditions for convergence. To further underpin the previous claim, in  Figure~\ref{fig:exp4}, we report the behaviour of 
$\|\mathbf{d}^k\|$, $\|A\mathbf{x}^k - \mathbf{b}\|$ and $\|H\mathbf{x}^k+\mathbf{g} -A^T\boldsymbol{\mu}^k\|$ for ADMM and for iALM\&GS where, at each inner iteration,  $10$ GS sweeps are performed. For the particular case of Problem 2 when $\beta=1$ and all the blocks have size one, we have $\rho(G^{ADMM})=1.0148>1$ and the ADMM is not convergent (see the upper panel in Figure~\ref{fig:exp4}). On the contrary, performing more than one GS sweep (lower panel of Figure~\ref{fig:exp4}) is enough to observe a convergent behaviour of all residuals. 

\begin{figure}[ht!]
	\centering
	\includegraphics[width=\textwidth]{./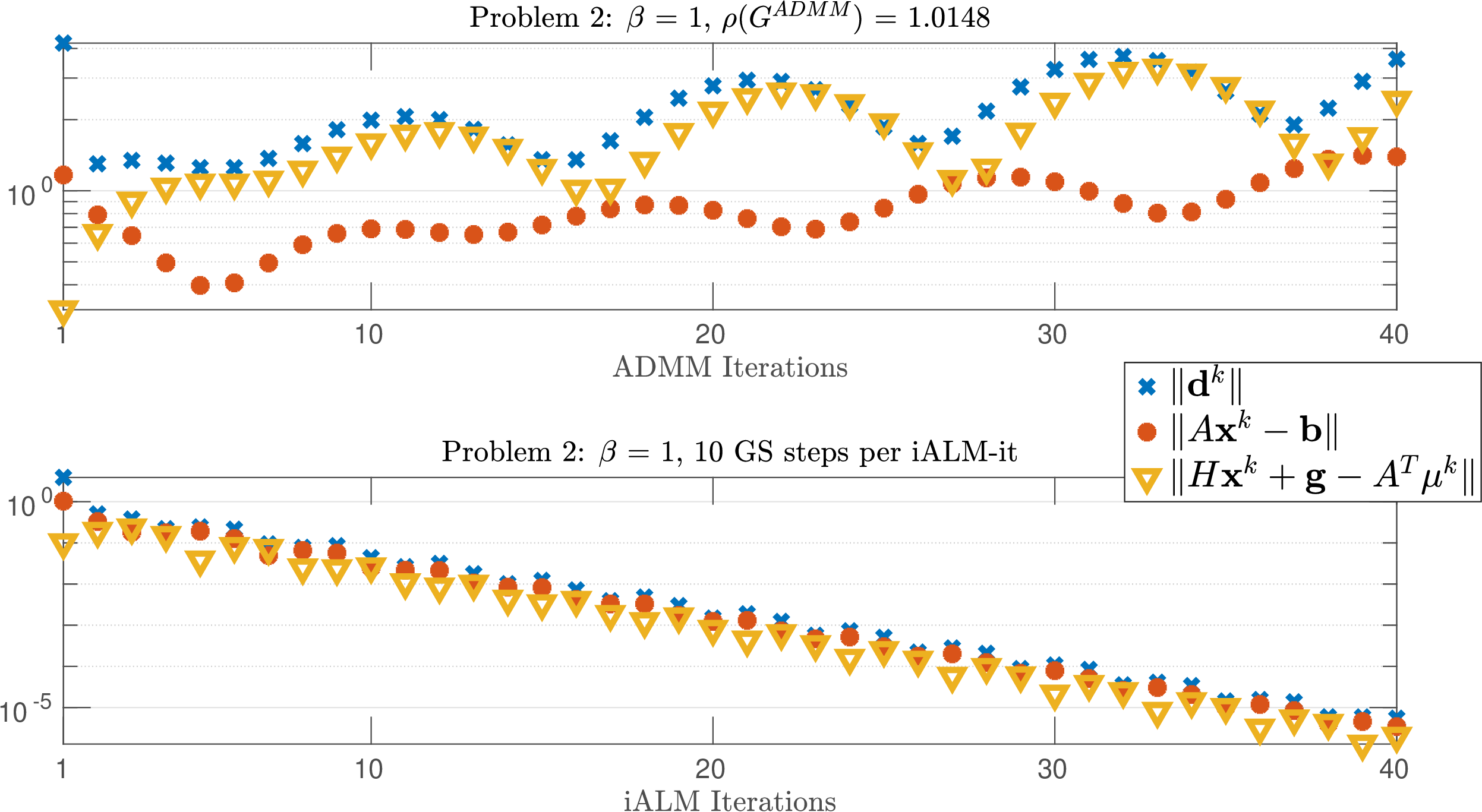}
	\caption{ADMM vs iALM\&GS for Problem 2 (logarithmic scale on $y$-axis). } \label{fig:exp4}
\end{figure}

Exactly the same observation can be made for the RADMM \cite{MR3904457,MR4066996}: this method is obtained considering a block permutation matrix $P^k$ which selects the order for solving the block-equations and then  splitting the matrix $P^kH_{\beta}{P^k}^T$ as  
\begin{equation} \label{eq:RADMM_splitting}
	P^kH_{\beta}{P^k}^T=D_{P^k}-L_{P^k}-L_{P^k}^T
\end{equation}
(the random permutation matrix is selected independently 
from the iterate $\mathbf{x}^k$ and uniformly at random among all possible block-permutation matrices). In more details, if we consider the iterative method

\begin{equation} \label{eq:RandomADMM_general}
\left\{
\begin{array}{rc}
& \hbox{select a permutation } \sigma \hbox{ of } \{1,\dots,n\}  \hbox{ uniformly at random independently from } \mathbf{x}^k,\\
&\mathbf{x}_{d_{\sigma(1)}}^{k+1}:= \arg \min_{\mathbf{x}_{d_{\sigma(1)}} \in \mathbf{R}^{d_{\sigma(1)}}} \mathcal{L}_\beta([\mathbf{x}_{d_{\sigma(1)}}, \mathbf{x}^k_{d_{\sigma(2)}},\dots, \mathbf{x}^k_{d_{\sigma(n)}}]^T,\boldsymbol{\mu}^k),   \\
&\vdots    \\
&\mathbf{x}_{d_{\sigma(n)}}^{k+1}:= \arg \min_{\mathbf{x}_{d_{\sigma(n)}} \in \mathbf{R}^{d_{\sigma(n)}}} \mathcal{L}_\beta([\mathbf{x}_{d_{\sigma(1)}}^{k+1}, \mathbf{x}_{d_{\sigma(2)}}^{k+1},\dots, \mathbf{x}_{d_{\sigma(n)}}]^T,\boldsymbol{\mu}^k),  \\
\vspace{0.1cm} \\
& \boldsymbol{\mu}^{k+1}:=\boldsymbol{\mu}^k-\beta(A\mathbf{x}^{k+1}-\mathbf{b})
\end{array}
\right.
\end{equation}
to solve problem \eqref{eq:QP_problem_ADMM}, using the splitting \eqref{eq:RADMM_splitting}, we can write \eqref{eq:RandomADMM_general} in the fixed point form 

{\footnotesize
	\begin{equation} \label{eq:ADMM_FIXEDPOINT}
	\begin{bmatrix}
	\mathbf{x}^{k+1} \\ \boldsymbol{\mu}^{k+1}
	\end{bmatrix}  \! \!=  \! \!\underbrace{ \begin{bmatrix}
	P_k^T & 0 \\
	0 & I
	\end{bmatrix} \! \! \! \begin{bmatrix}
	D_{P_k}  \! \!- \!L_{P_k} & 0 \\
	\beta AP_k^T & I
	\end{bmatrix}^{-1} \! \! 
	\begin{bmatrix}
	L_{P_k}^TP_k & P_kA^T \\
	0 & I
	\end{bmatrix} }_{=:G_{\beta}^{P_k}} \!  \!
	\begin{bmatrix}
	\mathbf{x}^k \\ \boldsymbol{\mu}^{k}
	\end{bmatrix} \!
	+ \begin{bmatrix}
	P_k^T & 0 \\
	0 & I
	\end{bmatrix}  \! \!\!\begin{bmatrix}
	D_{P_k}  \!\!- \!L_{P_k} & 0 \\
	\beta AP_k^T & I
	\end{bmatrix}^{-1} \! \!\begin{bmatrix}
	P_k(\beta A^T \mathbf{b}-\mathbf{g}) \\  \beta \mathbf{b}
	\end{bmatrix},
	\end{equation}
}
and hence
\begin{equation} \label{ADMM_quadratic_implementation2}
\left\{ \hspace*{-0.5cm}%
\begin{array}{rc} 
  & \mathbf{x}^{k+1} \!=  \!{P^k}^T[(D_{P^k} \!- \! L_{P^k})^{-1} L_{P^k}^T] P^k\mathbf{x}^{k}  \!+ \!{P^k}^T(D_{P^k} \!- \!L_{P^k})^{-1}P^k(A^T\boldsymbol{\mu}^{k} \!+ \!\beta A^T\mathbf{b} \!- \!\mathbf{g}) \\
  & \boldsymbol{\mu}^{k+1}:=\boldsymbol{\mu}^k-\beta(A\mathbf{x}^{k+1}-\mathbf{b}).
\end{array}
\right.
\end{equation}
The first equation in \eqref{ADMM_quadratic_implementation2} coincides exactly with one iteration of the RSSOR with $\omega=1$ (see equation \eqref{eq:shuflled_sor}) for the solution of the linear system $H_{\beta}\mathbf{x}=A^T\boldsymbol{\mu}^{k}+\beta A^T\mathbf{b}-\mathbf{g}$. On the other hand,  as proved in Theorem~\ref{theo:sorNONincreasingIts}, the number of RSSOR sweeps per iALM-step sufficient to obtain an {expected} residual which ensures the a.s. convergence, is uniformly bounded above by a constant. 
We find that this is a noteworthy improvement of the results obtained in \cite{MR3904457,MR4066996,mihic2019managing}. Indeed, in these works,  only the \textit{the convergence in expectation} of the iterates produced by \eqref{ADMM_quadratic_implementation2} has been proved, i.e., the convergence to zero of $\|\mathbb{E}(\begin{bmatrix}
\mathbf{x}^k \\
\boldsymbol{\mu}^k
\end{bmatrix})-\begin{bmatrix}
\overline{\mathbf{x}} \\
\overline{\boldsymbol{\mu}}
\end{bmatrix}\|$. To be precise, {using the notation introduced in \eqref{eq:ADMM_FIXEDPOINT}}, the authors prove that  $\overline{\rho}_{\beta}:=\rho({\overline{G}_{\beta}})<1$ where
\begin{equation*}
	\overline{G}_{\beta}:=\mathbb{E}(G_{\beta}^{P})=\frac{1}{|\mathcal{P}|}\sum_{P \in \mathcal{P}} G_{\beta}^{P}
\end{equation*}
and $\mathcal{P}$ is a specific subset of all permutation matrices ($\mathcal{P}$ is the subset of block permutation matrices with blocks of order $n$ in \cite{MR3904457,MR4066996} and, in \cite{mihic2019managing}, $\mathcal{P}$ is the subset of the permutation matrices obtained as $P=P_1P_2$, where $P_1$ is a block permutation matrix with blocks of order $n$ and $P_2$ is a permutation corresponding to a partition of $d$ elements into $n$ groups). 

Overall, as already pointed out in \cite[Sec. 2.2.4]{mihic2019managing}, the convergence in expectation may not be a good indicator of the robustness and the effectiveness of RADMM as there may exist problems characterized by a high $\|\mathbb{V}(G_{\beta}^{P})\|$: we find that switching from a convergence in expectation to an a.s. convergence with provable expected worst case complexity as stated in Theorem~\ref{theo:sorNONincreasingIts}, could be beneficial for the solution of such problems.

Even in this case, to further underpin the previous claim, we report in Figure~\ref{fig:exp4bis} the behaviour of $\|\mathbf{d}^k\|$, $\|A\mathbf{x}^k - \mathbf{b}\|$ and $\|H\mathbf{x}^k+\mathbf{g} -A^T\boldsymbol{\mu}^k\|$ for RADMM and for iALM\&RSGS where, at each inner iteration, $10$ RSGS sweeps are performed. As it is clear from the comparison between the upper panels of Figures~\ref{fig:exp4} and  \ref{fig:exp4bis} (and expected from the results obtained in \cite{MR4066996,MR3904457}), the introduction of a randomization procedure in the ADMM scheme is able to \textit{mitigate} the divergence in the case of Problem 2. At the same time, analogously of what was observed in Figure~\ref{fig:exp4} for the deterministic case, the benefits of performing more than one RSGS sweep per iALM-step are evident (lower panel of Figure~\ref{fig:exp4bis}).   
\begin{figure}[ht!]
	\centering
	\includegraphics[width=\textwidth]{./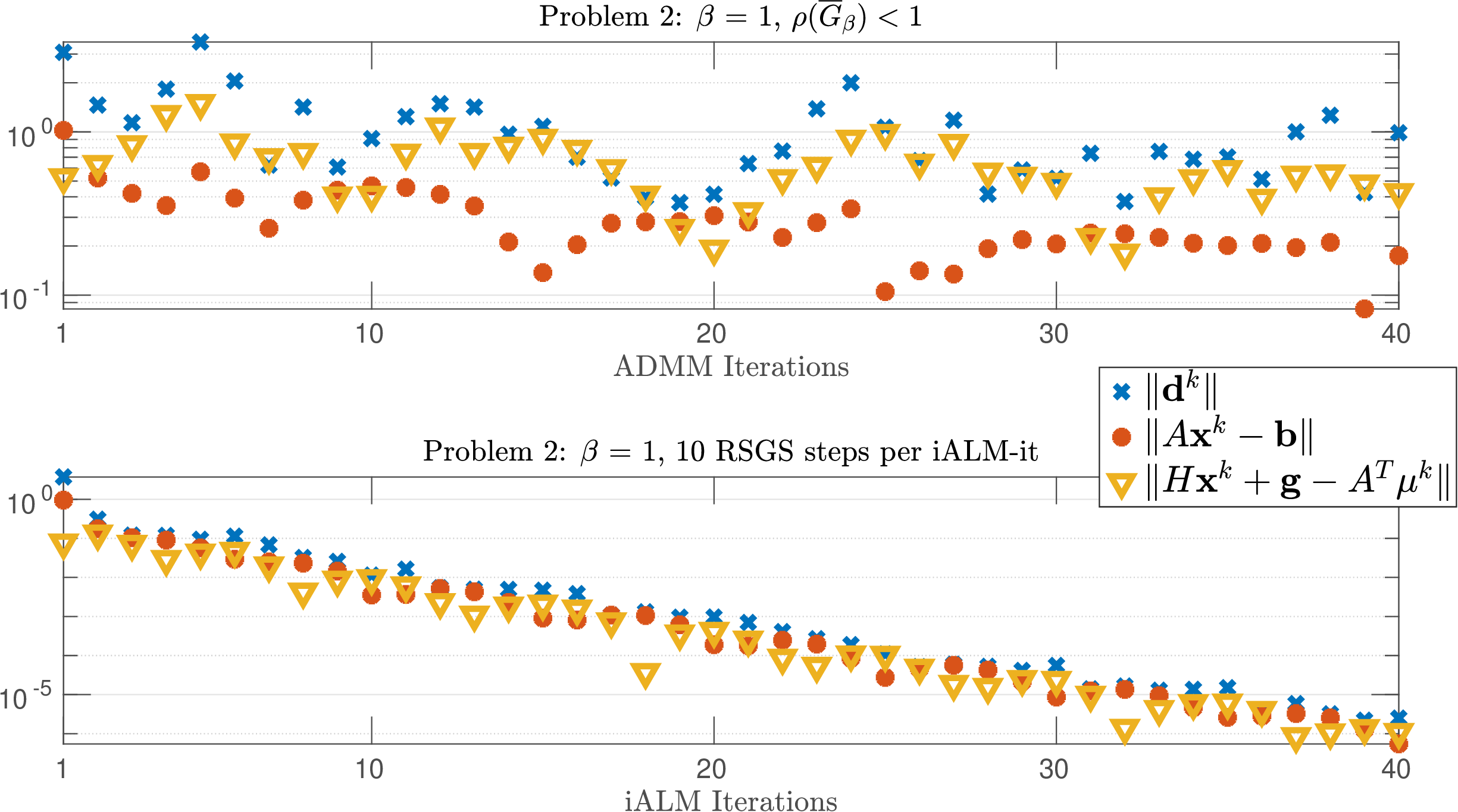}
	\caption{Random ADMM vs iALM\&RSGS for Problem 2 (logarithmic scale on $y$-axis). } \label{fig:exp4bis}
\end{figure} 

\section{Conclusions}
In this work we studied the inexact Augmented Lagrangian Method (iALM) for the solution of problem \eqref{eq:QP_problem_ADMM}. 
Using a splitting operator perspective,
we proved that if the  \textit{amount of introduced inexactness} (which could be modelled also with a random variable) decreases (in expectation) accordingly to suitably chosen $R^{k}$ where $R<1$, then we are able to give explicit asymptotic rate of convergence of the iALM (see Lemma~\ref{lem:epsol_iALM}).
Moreover,  even if the above mentioned condition requires an increasing accuracy in the linear systems to be solved at each iteration, we proved that when these linear systems are solved using the Conjugate  Gradient (CG) method or the Successive-Over-Relaxation method (SOR) and its Randomly Shuffled version (RSSOR), the number of iterations sufficient to satisfy the convergence requirements can be uniformly bounded from above (see Section~\ref{sec:SLS}). Finally, using the developed theory and interpreting the $n$-block (Random)Alternating Direction Method of Multipliers ((R)ADMM) as an iALM which performs exactly one (RS)SOR sweep  to obtain the approximate solutions of the inner linear systems, we provided computational evidence which demonstrates that the very well known convergence issues of the $n$-block (R)ADMM could be remedied if more than one (RS)SOR sweep for every iALM iteration were permitted. 

\section*{Acknowledgements}
The authors are in debt with M. Rossi (University of Milano-Bicocca) for the fruitful discussions on some technical details about the probabilistic case.  

\printbibliography		

@book {MR2978290,
    AUTHOR = {Horn, Roger A. and Johnson, Charles R.},
     TITLE = {Matrix analysis},
   EDITION = {Second},
 PUBLISHER = {Cambridge University Press, Cambridge},
      YEAR = {2013},
     PAGES = {xviii+643},
      ISBN = {978-0-521-54823-6},
   MRCLASS = {15-01},
  MRNUMBER = {2978290},
MRREVIEWER = {Mohammad Sal Moslehian},
}

@book{saig,
 	title={Iterative methods for sparse linear systems},
 	author={Saad, Yousef},
 	year={2003},
 	publisher={SIAM}
 }

@article{mihic2019managing,
	title={Managing Randomization in the Multi-Block Alternating Direction Method of Multipliers for Quadratic Optimization},
	author={Kresimir Mihic and Mingxi Zhu and Yinyu Ye},
	year={2020},
	JOURNAL ={Math. Program. Comp.},
	DOI = {https://doi.org/10.1007/s12532-020-00192-5}
}

@article {MR3904457,
	AUTHOR = {Chen, Caihua and Li, Min and Liu, Xin and Ye, Yinyu},
	TITLE = {Extended {ADMM} and {BCD} for nonseparable convex minimization
	models with quadratic coupling terms: convergence analysis and
	insights},
	JOURNAL = {Math. Program.},
	FJOURNAL = {Mathematical Programming},
	VOLUME = {173},
	YEAR = {2019},
	NUMBER = {1-2, Ser. A},
	PAGES = {37--77},
	ISSN = {0025-5610},
	MRCLASS = {65K05 (90C25)},
	MRNUMBER = {3904457},
	DOI = {10.1007/s10107-017-1205-9},
	URL = {https://doi.org/10.1007/s10107-017-1205-9},
}

@article {MR4066996,
	AUTHOR = {Sun, Ruoyu and Luo, Zhi-Quan and Ye, Yinyu},
	TITLE = {On the efficiency of random permutation for {ADMM} and
	coordinate descent},
	JOURNAL = {Math. Oper. Res.},
	FJOURNAL = {Mathematics of Operations Research},
	VOLUME = {45},
	YEAR = {2020},
	NUMBER = {1},
	PAGES = {233--271},
	ISSN = {0364-765X},
	MRCLASS = {90C25 (65K10 68W20)},
	MRNUMBER = {4066996},
	DOI = {10.1287/moor.2019.0990},
	URL = {https://doi.org/10.1287/moor.2019.0990},
}

@book {MR0158502,
	AUTHOR = {Varga, Richard S.},
	TITLE = {Matrix iterative analysis},
	PUBLISHER = {Prentice-Hall Inc. Englewood Cliffs N.J.},
	YEAR = {1962},
	PAGES = {xiii+322},
	MRCLASS = {65.35 (65.65)},
	MRNUMBER = {0158502},
	MRREVIEWER = {A. O. Garder},
}

@article {MR1829662,
	AUTHOR = {Golub, Gene H. and Wu, X. and Yuan, Jin-Yun},
	TITLE = {S{OR}-like methods for augmented systems},
	JOURNAL = {BIT},
	FJOURNAL = {BIT. Numerical Mathematics},
	VOLUME = {41},
	YEAR = {2001},
	NUMBER = {1},
	PAGES = {71--85},
	ISSN = {0006-3835},
	MRCLASS = {65F10},
	MRNUMBER = {1829662},
	MRREVIEWER = {Jan Z\'{\i}tko},
	DOI = {10.1023/A:1021965717530},
	URL = {https://doi.org/10.1023/A:1021965717530},
}

@article {MR3439797,
	AUTHOR = {Chen, Caihua and He, Bingsheng and Ye, Yinyu and Yuan,
	Xiaoming},
	TITLE = {The direct extension of {ADMM} for multi-block convex
	minimization problems is not necessarily convergent},
	JOURNAL = {Math. Program.},
	FJOURNAL = {Mathematical Programming},
	VOLUME = {155},
	YEAR = {2016},
	NUMBER = {1-2, Ser. A},
	PAGES = {57--79},
	ISSN = {0025-5610},
	MRCLASS = {90C25 (90C30)},
	MRNUMBER = {3439797},
	MRREVIEWER = {Luis A. Melara},
	DOI = {10.1007/s10107-014-0826-5},
	URL = {https://doi.org/10.1007/s10107-014-0826-5},
}

@article {MR3621829,
	AUTHOR = {Oswald, Peter and Zhou, Weiqi},
	TITLE = {Random reordering in {SOR}-type methods},
	JOURNAL = {Numer. Math.},
	FJOURNAL = {Numerische Mathematik},
	VOLUME = {135},
	YEAR = {2017},
	NUMBER = {4},
	PAGES = {1207--1220},
	ISSN = {0029-599X},
	MRCLASS = {65F10},
	MRNUMBER = {3621829},
	MRREVIEWER = {Martin Stoll},
	DOI = {10.1007/s00211-016-0829-7},
	URL = {https://doi.org/10.1007/s00211-016-0829-7},
}

@article {MR2483050,
	AUTHOR = {Chen, Fang and Jiang, Yao-Lin},
	TITLE = {A generalization of the inexact parameterized {U}zawa methods
	for saddle point problems},
	JOURNAL = {Appl. Math. Comput.},
	FJOURNAL = {Applied Mathematics and Computation},
	VOLUME = {206},
	YEAR = {2008},
	NUMBER = {2},
	PAGES = {765--771},
	ISSN = {0096-3003},
	MRCLASS = {65F10},
	MRNUMBER = {2483050},
	MRREVIEWER = {Cristina Tablino Possio},
	DOI = {10.1016/j.amc.2008.09.041},
	URL = {https://doi.org/10.1016/j.amc.2008.09.041},
}

@book {MR3495481,
	AUTHOR = {Hackbusch, Wolfgang},
	TITLE = {Iterative solution of large sparse systems of equations},
	SERIES = {Applied Mathematical Sciences},
	VOLUME = {95},
	EDITION = {Second},
	PUBLISHER = {Springer, [Cham]},
	YEAR = {2016},
	PAGES = {xxiii+509},
	ISBN = {978-3-319-28481-1; 978-3-319-28483-5},
	MRCLASS = {65-01 (65F10 65F50)},
	MRNUMBER = {3495481},
	DOI = {10.1007/978-3-319-28483-5},
	URL = {https://doi.org/10.1007/978-3-319-28483-5},
}

@article{iALM,
	AUTHOR = {Xu, Yangyang},
	TITLE = {Iteration complexity of inexact augmented {L}agrangian methods
	for constrained convex programming},
	JOURNAL = {Math. Program.},
	FJOURNAL = {Mathematical Programming},
	VOLUME = {185},
	YEAR = {2021},
	NUMBER = {1-2, Ser. A},
	PAGES = {199--244},
	ISSN = {0025-5610},
	MRCLASS = {90C25 (68W40 90C60)},
	MRNUMBER = {4201712},
	DOI = {10.1007/s10107-019-01425-9},
	URL = {https://doi.org/10.1007/s10107-019-01425-9},
}

@book {MR0273810,
	AUTHOR = {Ortega, J. M. and Rheinboldt, W. C.},
	TITLE = {Iterative solution of nonlinear equations in several
	variables},
	PUBLISHER = {Academic Press, New York-London},
	YEAR = {1970},
	PAGES = {xx+572},
	MRCLASS = {65.50},
	MRNUMBER = {0273810},
	MRREVIEWER = {J. F. Traub},
}

@book{boyd2011distributed,
	title={Distributed optimization and statistical learning via the alternating direction method of multipliers},
	author={Boyd, Stephen and Parikh, Neal and Chu, Eric},
	year={2011},
	publisher={Now Publishers Inc}
}

@article{CC01a,
	author = {Chang, Chih-Chung and Lin, Chih-Jen},
	title = {{LIBSVM}: A library for support vector machines},
	journal = {ACM Transactions on Intelligent Systems and Technology},
	volume = {2},
	issue = {3},
	year = {2011},
	pages = {27:1--27:27},
	note =	 {Software available at \url{http://www.csie.ntu.edu.tw/~cjlin/libsvm}}
}

@article {MR388811,
	AUTHOR = {Glowinski, R. and Marrocco, A.},
	TITLE = {Sur l'approximation, par \'{e}l\'{e}ments finis d'ordre un, et la
	r\'{e}solution, par p\'{e}nalisation-dualit\'{e}, d'une classe de
	probl\`emes de {D}irichlet non lin\'{e}aires},
	JOURNAL = {Rev. Fran\c{c}aise Automat. Informat. Recherche Op\'{e}rationnelle
	S\'{e}r. Rouge Anal. Num\'{e}r.},
	FJOURNAL = {Revue Fran\c{c}aise d'Automatique, Informatique et Recherche
	Op\'{e}rationnelle S\'{e}rie Rouge. Analyse Num\'{e}rique},
	VOLUME = {9},
	YEAR = {1975},
	NUMBER = {{\rm R}-2},
	PAGES = {41--76},
	ISSN = {0397-9342},
	MRCLASS = {65N35},
	MRNUMBER = {388811},
	MRREVIEWER = {J. R. Cannon},
}

@article {MR271809,
	AUTHOR = {Hestenes, Magnus R.},
	TITLE = {Multiplier and gradient methods},
	JOURNAL = {J. Optim. Theory Appl.},
	FJOURNAL = {Journal of Optimization Theory and Applications},
	VOLUME = {4},
	YEAR = {1969},
	PAGES = {303--320},
	ISSN = {0022-3239},
	MRCLASS = {49.20 (65.00)},
	MRNUMBER = {271809},
	MRREVIEWER = {P. J. Laurent},
	DOI = {10.1007/BF00927673},
	URL = {https://doi.org/10.1007/BF00927673},
}

@article{powell1969method,
	title={A method for nonlinear constraints in minimization problems},
	author={Powell, Michael JD},
	journal={Optimization},
	pages={283--298},
	year={1969},
	publisher={Academic Press}
}

@article {MR3959087,
	AUTHOR = {Liu, Ya-Feng and Liu, Xin and Ma, Shiqian},
	TITLE = {On the nonergodic convergence rate of an inexact augmented
	{L}agrangian framework for composite convex programming},
	JOURNAL = {Math. Oper. Res.},
	FJOURNAL = {Mathematics of Operations Research},
	VOLUME = {44},
	YEAR = {2019},
	NUMBER = {2},
	PAGES = {632--650},
	ISSN = {0364-765X},
	MRCLASS = {90C25 (65K05)},
	MRNUMBER = {3959087},
	MRREVIEWER = {Majid E. Abbasov},
	DOI = {10.1287/moor.2018.0939},
	URL = {https://doi.org/10.1287/moor.2018.0939},
}

@article {MR3439811,
	AUTHOR = {Lan, Guanghui and Monteiro, Renato D. C.},
	TITLE = {Iteration-complexity of first-order augmented {L}agrangian
	methods for convex programming},
	JOURNAL = {Math. Program.},
	FJOURNAL = {Mathematical Programming},
	VOLUME = {155},
	YEAR = {2016},
	NUMBER = {1-2, Ser. A},
	PAGES = {511--547},
	ISSN = {0025-5610},
	MRCLASS = {90C25 (49M37 90C06 90C22)},
	MRNUMBER = {3439811},
	DOI = {10.1007/s10107-015-0861-x},
	URL = {https://doi.org/10.1007/s10107-015-0861-x},
}

@article {MR3267149,
	AUTHOR = {Nedelcu, Valentin and Necoara, Ion and Tran-Dinh, Quoc},
	TITLE = {Computational complexity of inexact gradient augmented
	{L}agrangian methods: application to constrained {MPC}},
	JOURNAL = {SIAM J. Control Optim.},
	FJOURNAL = {SIAM Journal on Control and Optimization},
	VOLUME = {52},
	YEAR = {2014},
	NUMBER = {5},
	PAGES = {3109--3134},
	ISSN = {0363-0129},
	MRCLASS = {49M29 (90C25 90C46)},
	MRNUMBER = {3267149},
	MRREVIEWER = {Ruisheng Wang},
	DOI = {10.1137/120897547},
	URL = {https://doi.org/10.1137/120897547},
}

@article {MR3406681,
	AUTHOR = {Kang, Myeongmin and Kang, Myungjoo and Jung, Miyoun},
	TITLE = {Inexact accelerated augmented {L}agrangian methods},
	JOURNAL = {Comput. Optim. Appl.},
	FJOURNAL = {Computational Optimization and Applications. An International
	Journal},
	VOLUME = {62},
	YEAR = {2015},
	NUMBER = {2},
	PAGES = {373--404},
	ISSN = {0926-6003},
	MRCLASS = {90C25 (65K05 94A08)},
	MRNUMBER = {3406681},
	MRREVIEWER = {Silvia Villa},
	DOI = {10.1007/s10589-015-9742-8},
	URL = {https://doi.org/10.1007/s10589-015-9742-8},
}

@article {MR3467167,
	AUTHOR = {Ryu, Ernest K. and Boyd, Stephen},
	TITLE = {A primer on monotone operator methods (survey)},
	JOURNAL = {Appl. Comput. Math.},
	FJOURNAL = {Applied and Computational Mathematics. An International
	Journal},
	VOLUME = {15},
	YEAR = {2016},
	NUMBER = {1},
	PAGES = {3--43},
	ISSN = {1683-3511},
	MRCLASS = {47H05 (47H09 47H10 47N10 65K05 90C25)},
	MRNUMBER = {3467167},
	MRREVIEWER = {Radu Ioan Bo\c{t}},
}

@article {MR3097289,
	AUTHOR = {Eckstein, Jonathan and Silva, Paulo J. S.},
	TITLE = {A practical relative error criterion for augmented
	{L}agrangians},
	JOURNAL = {Math. Program.},
	FJOURNAL = {Mathematical Programming},
	VOLUME = {141},
	YEAR = {2013},
	NUMBER = {1-2, Ser. A},
	PAGES = {319--348},
	ISSN = {0025-5610},
	MRCLASS = {90C25 (90C30)},
	MRNUMBER = {3097289},
	MRREVIEWER = {Wies\l awa T. Obuchowska},
	DOI = {10.1007/s10107-012-0528-9},
	URL = {https://doi.org/10.1007/s10107-012-0528-9},
}

@article {MR1756912,
	AUTHOR = {Solodov, M. V. and Svaiter, B. F.},
	TITLE = {A hybrid approximate extragradient-proximal point algorithm
	using the enlargement of a maximal monotone operator},
	JOURNAL = {Set-Valued Anal.},
	FJOURNAL = {Set-Valued Analysis. An International Journal Devoted to the
	Theory of Multifunctions and its Applications},
	VOLUME = {7},
	YEAR = {1999},
	NUMBER = {4},
	PAGES = {323--345},
	ISSN = {0927-6947},
	MRCLASS = {90C52 (65J15)},
	MRNUMBER = {1756912},
	DOI = {10.1023/A:1008777829180},
	URL = {https://doi.org/10.1023/A:1008777829180},
}

@article {MR410483,
	AUTHOR = {Rockafellar, R. Tyrrell},
	TITLE = {Monotone operators and the proximal point algorithm},
	JOURNAL = {SIAM J. Control Optim.},
	FJOURNAL = {SIAM Journal on Control and Optimization},
	VOLUME = {14},
	YEAR = {1976},
	NUMBER = {5},
	PAGES = {877--898},
	ISSN = {0363-0129},
	MRCLASS = {47H05 (49D45)},
	MRNUMBER = {410483},
	MRREVIEWER = {G. M. Ewing},
	DOI = {10.1137/0314056},
	URL = {https://doi.org/10.1137/0314056},
}

@article {MR1713951,
	AUTHOR = {Solodov, M. V. and Svaiter, B. F.},
	TITLE = {A hybrid projection-proximal point algorithm},
	JOURNAL = {J. Convex Anal.},
	FJOURNAL = {Journal of Convex Analysis},
	VOLUME = {6},
	YEAR = {1999},
	NUMBER = {1},
	PAGES = {59--70},
	ISSN = {0944-6532},
	MRCLASS = {90C30 (90C55)},
	MRNUMBER = {1713951},
	MRREVIEWER = {R. Tichatschke},
}

@article {MR1853949,
	AUTHOR = {Solodov, M. V. and Svaiter, B. F.},
	TITLE = {An inexact hybrid generalized proximal point algorithm and
	some new results on the theory of {B}regman functions},
	JOURNAL = {Math. Oper. Res.},
	FJOURNAL = {Mathematics of Operations Research},
	VOLUME = {25},
	YEAR = {2000},
	NUMBER = {2},
	PAGES = {214--230},
	ISSN = {0364-765X},
	MRCLASS = {90C25 (90C30 90C46)},
	MRNUMBER = {1853949},
	MRREVIEWER = {Stephen C. Billups},
	DOI = {10.1287/moor.25.2.214.12222},
	URL = {https://doi.org/10.1287/moor.25.2.214.12222},
}

@article {MR1280549,
	AUTHOR = {Oswald, P.},
	TITLE = {On the convergence rate of {SOR}: a worst case estimate},
	JOURNAL = {Computing},
	FJOURNAL = {Computing. Archives for Scientific Computing},
	VOLUME = {52},
	YEAR = {1994},
	NUMBER = {3},
	PAGES = {245--255},
	ISSN = {0010-485X},
	MRCLASS = {65F10 (65N22)},
	MRNUMBER = {1280549},
	MRREVIEWER = {G. Maess},
	DOI = {10.1007/BF02246506},
	URL = {https://doi.org/10.1007/BF02246506},
}

@article{peng2012rasl,
	title={RASL: Robust alignment by sparse and low-rank decomposition for linearly correlated images},
	author={Peng, Yigang and Ganesh, Arvind and Wright, John and Xu, Wenli and Ma, Yi},
	journal={IEEE transactions on pattern analysis and machine intelligence},
	volume={34},
	number={11},
	pages={2233--2246},
	year={2012},
	publisher={IEEE}
}

@article {MR2765489,
	AUTHOR = {Tao, Min and Yuan, Xiaoming},
	TITLE = {Recovering low-rank and sparse components of matrices from
	incomplete and noisy observations},
	JOURNAL = {SIAM J. Optim.},
	FJOURNAL = {SIAM Journal on Optimization},
	VOLUME = {21},
	YEAR = {2011},
	NUMBER = {1},
	PAGES = {57--81},
	ISSN = {1052-6234},
	MRCLASS = {90C06 (90C22 90C59 93E24)},
	MRNUMBER = {2765489},
	DOI = {10.1137/100781894},
	URL = {https://doi.org/10.1137/100781894},
}

@article {MR0060307,
	AUTHOR = {Hestenes, Magnus R. and Stiefel, Eduard},
	TITLE = {Methods of conjugate gradients for solving linear systems},
	JOURNAL = {J. Research Nat. Bur. Standards},
	FJOURNAL = {Journal of Research of the National Bureau of Standards},
	VOLUME = {49},
	YEAR = {1952},
	PAGES = {409--436 (1953)},
	ISSN = {0160-1741},
	MRCLASS = {65.0X},
	MRNUMBER = {0060307},
	MRREVIEWER = {A. S. Householder},
}

@article{gauss1903werke,
	title={Werke (in {G}erman), 9},
	author={Gauss, Carl Friedrich},
	journal={G{\"o}ttingen: K{\"o}ninglichen Gesellschaft der Wissenschaften},
	volume={763},
	pages={764},
	year={1903}
}

@book{bodewig2014matrix,
	title={Matrix calculus},
	author={Bodewig, Ewald},
	year={2014},
	publisher={Elsevier}
}

@article {MR2168342,
	AUTHOR = {Benzi, Michele and Golub, Gene H. and Liesen, J\"{o}rg},
	TITLE = {Numerical solution of saddle point problems},
	JOURNAL = {Acta Numer.},
	FJOURNAL = {Acta Numerica},
	VOLUME = {14},
	YEAR = {2005},
	PAGES = {1--137},
	ISSN = {0962-4929},
	MRCLASS = {65F10 (76D07 90C47)},
	MRNUMBER = {2168342},
	MRREVIEWER = {Timothy Nigel Phillips},
	DOI = {10.1017/S0962492904000212},
	URL = {https://doi.org/10.1017/S0962492904000212},
}

@article {MR3574725,
	AUTHOR = {Li, Jiyanglin and Hu, Ze-Chun},
	TITLE = {Toeplitz lemma, complete convergence, and complete moment
	convergence},
	JOURNAL = {Comm. Statist. Theory Methods},
	FJOURNAL = {Communications in Statistics. Theory and Methods},
	VOLUME = {46},
	YEAR = {2017},
	NUMBER = {4},
	PAGES = {1731--1743},
	ISSN = {0361-0926},
	MRCLASS = {60F05 (40A05 60F15 60F25)},
	MRNUMBER = {3574725},
	DOI = {10.1080/03610926.2015.1026996},
	URL = {https://doi.org/10.1080/03610926.2015.1026996},
}

@article {MR3367830,
	AUTHOR = {Birken, Philipp},
	TITLE = {Termination criteria for inexact fixed-point schemes},
	JOURNAL = {Numer. Linear Algebra Appl.},
	FJOURNAL = {Numerical Linear Algebra with Applications},
	VOLUME = {22},
	YEAR = {2015},
	NUMBER = {4},
	PAGES = {702--716},
	ISSN = {1070-5325},
	MRCLASS = {65H10},
	MRNUMBER = {3367830},
	MRREVIEWER = {Ioannis K. Argyros},
	DOI = {10.1002/nla.1982},
	URL = {https://doi.org/10.1002/nla.1982},
}

@book {MR0455094,
	AUTHOR = {Stout, William F.},
	TITLE = {Almost sure convergence},
	NOTE = {Probability and Mathematical Statistics, Vol. 24},
	PUBLISHER = {Academic Press [A subsidiary of Harcourt Brace Jovanovich,
	Publishers], New York-London},
	YEAR = {1974},
	PAGES = {x+381},
	MRCLASS = {60F15},
	MRNUMBER = {0455094},
	MRREVIEWER = {C. C. Heyde},
}

@article {MR1302679,
	AUTHOR = {Elman, Howard C. and Golub, Gene H.},
	TITLE = {Inexact and preconditioned {U}zawa algorithms for saddle point
	problems},
	JOURNAL = {SIAM J. Numer. Anal.},
	FJOURNAL = {SIAM Journal on Numerical Analysis},
	VOLUME = {31},
	YEAR = {1994},
	NUMBER = {6},
	PAGES = {1645--1661},
	ISSN = {0036-1429},
	MRCLASS = {65F10 (65F35 65N22 76M10)},
	MRNUMBER = {1302679},
	DOI = {10.1137/0731085},
	URL = {https://doi.org/10.1137/0731085},
}

@book{fortin2000augmented,
	title={Augmented Lagrangian methods: applications to the numerical solution of boundary-value problems},
	author={Fortin, Michel and Glowinski, Roland},
	year={2000},
	publisher={Elsevier}
}

@article {MR1451114,
	AUTHOR = {Bramble, James H. and Pasciak, Joseph E. and Vassilev, Apostol
	T.},
	TITLE = {Analysis of the inexact {U}zawa algorithm for saddle point
	problems},
	JOURNAL = {SIAM J. Numer. Anal.},
	FJOURNAL = {SIAM Journal on Numerical Analysis},
	VOLUME = {34},
	YEAR = {1997},
	NUMBER = {3},
	PAGES = {1072--1092},
	ISSN = {0036-1429},
	MRCLASS = {65N30 (65F10 73V05 76M10 90C30)},
	MRNUMBER = {1451114},
	MRREVIEWER = {Shuzi Zhou},
	DOI = {10.1137/S0036142994273343},
	URL = {https://doi.org/10.1137/S0036142994273343},
}

@article {MR1659105,
	AUTHOR = {Chen, Xiaojun},
	TITLE = {On preconditioned {U}zawa methods and {SOR} methods for
	saddle-point problems},
	JOURNAL = {J. Comput. Appl. Math.},
	FJOURNAL = {Journal of Computational and Applied Mathematics},
	VOLUME = {100},
	YEAR = {1998},
	NUMBER = {2},
	PAGES = {207--224},
	ISSN = {0377-0427},
	MRCLASS = {90C30 (49K35 65H10)},
	MRNUMBER = {1659105},
	MRREVIEWER = {De Ren Wang},
	DOI = {10.1016/S0377-0427(98)00197-6},
	URL = {https://doi.org/10.1016/S0377-0427(98)00197-6},
}

@article {MR1766854,
	AUTHOR = {Cheng, Xiao-Liang},
	TITLE = {On the nonlinear inexact {U}zawa algorithm for saddle-point
	problems},
	JOURNAL = {SIAM J. Numer. Anal.},
	FJOURNAL = {SIAM Journal on Numerical Analysis},
	VOLUME = {37},
	YEAR = {2000},
	NUMBER = {6},
	PAGES = {1930--1934},
	ISSN = {0036-1429},
	MRCLASS = {65F10 (65N22 65N30)},
	MRNUMBER = {1766854},
	MRREVIEWER = {Stefano Serra Capizzano},
	DOI = {10.1137/S0036142998349266},
	URL = {https://doi.org/10.1137/S0036142998349266},
}

@article {MR1876634,
	AUTHOR = {Cui, Mingrong},
	TITLE = {A sufficient condition for the convergence of the inexact
	{U}zawa algorithm for saddle point problems},
	JOURNAL = {J. Comput. Appl. Math.},
	FJOURNAL = {Journal of Computational and Applied Mathematics},
	VOLUME = {139},
	YEAR = {2002},
	NUMBER = {2},
	PAGES = {189--196},
	ISSN = {0377-0427},
	MRCLASS = {65N22 (65F10)},
	MRNUMBER = {1876634},
	MRREVIEWER = {Shuzi Zhou},
	DOI = {10.1016/S0377-0427(01)00430-7},
	URL = {https://doi.org/10.1016/S0377-0427(01)00430-7},
}

@article {MR46149,
	AUTHOR = {Frankel, Stanley P.},
	TITLE = {Convergence rates of iterative treatments of partial
	differential equations},
	JOURNAL = {Math. Tables Aids Comput.},
	FJOURNAL = {Mathematical Tables and other Aids to Computation},
	VOLUME = {4},
	YEAR = {1950},
	PAGES = {65--75},
	ISSN = {0891-6837},
	MRCLASS = {65.0X},
	MRNUMBER = {46149},
	MRREVIEWER = {S. C. van Veen},
}

@book {MR2938019,
	AUTHOR = {Young, David M.},
	TITLE = {{I}terative methods for solving partial difference equation of elliptic type},
	NOTE = {Thesis (Ph.D.)--Harvard University},
	PUBLISHER = {ProQuest LLC, Ann Arbor, MI},
	YEAR = {1950},
	MRCLASS = {Thesis},
	MRNUMBER = {2938019},
	URL =
	{http://gateway.proquest.com/openurl?url_ver=Z39.88-2004&rft_val_fmt=info:ofi/fmt:kev:mtx:dissertation&res_dat=xri:pqdiss&rft_dat=xri:pqdiss:0169521},
}

@article {MR3590721,
	AUTHOR = {Cai, Xingju and Han, Deren and Yuan, Xiaoming},
	TITLE = {On the convergence of the direct extension of {ADMM} for
	three-block separable convex minimization models with one
	strongly convex function},
	JOURNAL = {Comput. Optim. Appl.},
	FJOURNAL = {Computational Optimization and Applications. An International
	Journal},
	VOLUME = {66},
	YEAR = {2017},
	NUMBER = {1},
	PAGES = {39--73},
	ISSN = {0926-6003},
	MRCLASS = {90C25 (65K05)},
	MRNUMBER = {3590721},
	MRREVIEWER = {Hung M. Phan},
	DOI = {10.1007/s10589-016-9860-y},
	URL = {https://doi.org/10.1007/s10589-016-9860-y},
}

@article {MR3121502,
	AUTHOR = {Chen, Caihua and Shen, Yuan and You, Yanfei},
	TITLE = {On the convergence analysis of the alternating direction
	method of multipliers with three blocks},
	JOURNAL = {Abstr. Appl. Anal.},
	FJOURNAL = {Abstract and Applied Analysis},
	YEAR = {2013},
	PAGES = {Art. ID 183961, 7},
	ISSN = {1085-3375},
	MRCLASS = {90C25},
	MRNUMBER = {3121502},
	DOI = {10.1155/2013/183961},
	URL = {https://doi.org/10.1155/2013/183961},
}

@article {MR3740519,
	AUTHOR = {Davis, Damek and Yin, Wotao},
	TITLE = {A three-operator splitting scheme and its optimization
	applications},
	JOURNAL = {Set-Valued Var. Anal.},
	FJOURNAL = {Set-Valued and Variational Analysis},
	VOLUME = {25},
	YEAR = {2017},
	NUMBER = {4},
	PAGES = {829--858},
	ISSN = {1877-0533},
	MRCLASS = {47H05 (65K05 65K15 90C25)},
	MRNUMBER = {3740519},
	DOI = {10.1007/s11228-017-0421-z},
	URL = {https://doi.org/10.1007/s11228-017-0421-z},
}

@article {MR2968865,
	AUTHOR = {Goldfarb, Donald and Ma, Shiqian},
	TITLE = {Fast multiple-splitting algorithms for convex optimization},
	JOURNAL = {SIAM J. Optim.},
	FJOURNAL = {SIAM Journal on Optimization},
	VOLUME = {22},
	YEAR = {2012},
	NUMBER = {2},
	PAGES = {533--556},
	ISSN = {1052-6234},
	MRCLASS = {90C25},
	MRNUMBER = {2968865},
	MRREVIEWER = {Peter Recht},
	DOI = {10.1137/090780705},
	URL = {https://doi.org/10.1137/090780705},
}

@article {MR2983116,
	AUTHOR = {Han, Deren and Yuan, Xiaoming},
	TITLE = {A note on the alternating direction method of multipliers},
	JOURNAL = {J. Optim. Theory Appl.},
	FJOURNAL = {Journal of Optimization Theory and Applications},
	VOLUME = {155},
	YEAR = {2012},
	NUMBER = {1},
	PAGES = {227--238},
	ISSN = {0022-3239},
	MRCLASS = {90C25},
	MRNUMBER = {2983116},
	DOI = {10.1007/s10957-012-0003-z},
	URL = {https://doi.org/10.1007/s10957-012-0003-z},
}

@article {MR3056148,
	AUTHOR = {He, Bingsheng and Tao, Min and Xu, Minghua and Yuan, Xiaoming},
	TITLE = {An alternating direction-based contraction method for linearly
	constrained separable convex programming problems},
	JOURNAL = {Optimization},
	FJOURNAL = {Optimization. A Journal of Mathematical Programming and
	Operations Research},
	VOLUME = {62},
	YEAR = {2013},
	NUMBER = {4},
	PAGES = {573--596},
	ISSN = {0233-1934},
	MRCLASS = {90C25 (47J25)},
	MRNUMBER = {3056148},
	MRREVIEWER = {Stefan M. Stefanov},
	DOI = {10.1080/02331934.2011.611885},
	URL = {https://doi.org/10.1080/02331934.2011.611885},
}

@article {MR2968856,
	AUTHOR = {He, Bingsheng and Tao, Min and Yuan, Xiaoming},
	TITLE = {Alternating direction method with {G}aussian back substitution
	for separable convex programming},
	JOURNAL = {SIAM J. Optim.},
	FJOURNAL = {SIAM Journal on Optimization},
	VOLUME = {22},
	YEAR = {2012},
	NUMBER = {2},
	PAGES = {313--340},
	ISSN = {1052-6234},
	MRCLASS = {90C25 (65K05)},
	MRNUMBER = {2968856},
	MRREVIEWER = {Francisco Guerra-V\'{a}zquez},
	DOI = {10.1137/110822347},
	URL = {https://doi.org/10.1137/110822347},
}

@article {MR3372674,
	AUTHOR = {Li, Min and Sun, Defeng and Toh, Kim-Chuan},
	TITLE = {A convergent 3-block semi-proximal {ADMM} for convex
	minimization problems with one strongly convex block},
	JOURNAL = {Asia-Pac. J. Oper. Res.},
	FJOURNAL = {Asia-Pacific Journal of Operational Research},
	VOLUME = {32},
	YEAR = {2015},
	NUMBER = {4},
	PAGES = {1550024, 19},
	ISSN = {0217-5959},
	MRCLASS = {90C25 (65K05 90C33)},
	MRNUMBER = {3372674},
	MRREVIEWER = {Hung M. Phan},
	DOI = {10.1142/S0217595915500244},
	URL = {https://doi.org/10.1142/S0217595915500244},
}

@article {MR3374652,
	AUTHOR = {Lin, Tianyi and Ma, Shiqian and Zhang, Shuzhong},
	TITLE = {On the global linear convergence of the {ADMM} with multiblock
	variables},
	JOURNAL = {SIAM J. Optim.},
	FJOURNAL = {SIAM Journal on Optimization},
	VOLUME = {25},
	YEAR = {2015},
	NUMBER = {3},
	PAGES = {1478--1497},
	ISSN = {1052-6234},
	MRCLASS = {90C25 (90C30)},
	MRNUMBER = {3374652},
	MRREVIEWER = {Yves Lucet},
	DOI = {10.1137/140971178},
	URL = {https://doi.org/10.1137/140971178},
}

@article {MR4123860,
	AUTHOR = {Tao, Min},
	TITLE = {Convergence study of indefinite proximal {ADMM} with a
	relaxation factor},
	JOURNAL = {Comput. Optim. Appl.},
	FJOURNAL = {Computational Optimization and Applications. An International
	Journal},
	VOLUME = {77},
	YEAR = {2020},
	NUMBER = {1},
	PAGES = {91--123},
	ISSN = {0926-6003},
	MRCLASS = {90C25 (65K05)},
	MRNUMBER = {4123860},
	DOI = {10.1007/s10589-020-00206-x},
	URL = {https://doi.org/10.1007/s10589-020-00206-x},
}

@book {MR2893652,
	AUTHOR = {Billingsley, Patrick},
	TITLE = {Probability and measure},
	SERIES = {Wiley Series in Probability and Statistics},
	PUBLISHER = {John Wiley \& Sons, Inc., Hoboken, NJ},
	YEAR = {2012},
	PAGES = {xviii+624},
	ISBN = {978-1-118-12237-2},
	MRCLASS = {60-01 (28-01)},
	MRNUMBER = {2893652},
}

@book {MR2722836,
	AUTHOR = {Durrett, Rick},
	TITLE = {Probability: theory and examples},
	SERIES = {Cambridge Series in Statistical and Probabilistic Mathematics},
	VOLUME = {31},
	EDITION = {Fourth},
	PUBLISHER = {Cambridge University Press, Cambridge},
	YEAR = {2010},
	PAGES = {x+428},
	ISBN = {978-0-521-76539-8},
	MRCLASS = {60-01},
	MRNUMBER = {2722836},
	DOI = {10.1017/CBO9780511779398},
	URL = {https://doi.org/10.1017/CBO9780511779398},
}

@book {MR0108399,
	AUTHOR = {Arrow, Kenneth J. and Hurwicz, Leonid and Uzawa, Hirofumi},
	TITLE = {Studies in linear and non-linear programming},
	SERIES = {With contributions by H. B. Chenery, S. M. Johnson, S. Karlin,
	T. Marschak, R. M. Solow. Stanford Mathematical Studies in the
	Social Sciences, vol. II},
	PUBLISHER = {Stanford University Press, Stanford, Calif.},
	YEAR = {1958},
	PAGES = {vii+229},
	MRCLASS = {90.00},
	MRNUMBER = {0108399},
	MRREVIEWER = {A. Charnes},
}

@phdthesis{eckstein1989splitting,
	title={Splitting methods for monotone operators with applications to parallel optimization},
	author={Eckstein, Jonathan},
	year={1989},
	school={Massachusetts Institute of Technology}
}

@article{eckstein2012augmented,
	title={Augmented Lagrangian and alternating direction methods for convex optimization: A tutorial and some illustrative computational results},
	author={Eckstein, Jonathan and Yao, Wang},
	journal={RUTCOR Research Reports},
	volume={32},
	number={3},
	pages={44},
	year={2012}
}

@article {MR4174639,
	AUTHOR = {Hager, William W. and Zhang, Hongchao},
	TITLE = {Convergence rates for an inexact {ADMM} applied to separable
	convex optimization},
	JOURNAL = {Comput. Optim. Appl.},
	FJOURNAL = {Computational Optimization and Applications. An International
	Journal},
	VOLUME = {77},
	YEAR = {2020},
	NUMBER = {3},
	PAGES = {729--754},
	ISSN = {0926-6003},
	MRCLASS = {90C25 (65K05 65Y20 90C06)},
	MRNUMBER = {4174639},
	DOI = {10.1007/s10589-020-00221-y},
	URL = {https://doi.org/10.1007/s10589-020-00221-y},
}

@article {MR4201710,
	AUTHOR = {Chen, Liang and Li, Xudong and Sun, Defeng and Toh, Kim-Chuan},
	TITLE = {On the equivalence of inexact proximal {ALM} and {ADMM} for a
	class of convex composite programming},
	JOURNAL = {Math. Program.},
	FJOURNAL = {Mathematical Programming},
	VOLUME = {185},
	YEAR = {2021},
	NUMBER = {1-2, Ser. A},
	PAGES = {111--161},
	ISSN = {0025-5610},
	MRCLASS = {90C25 (65K05 90C06 90C20)},
	MRNUMBER = {4201710},
	DOI = {10.1007/s10107-019-01423-x},
	URL = {https://doi.org/10.1007/s10107-019-01423-x},
}

@article {MR3420805,
	AUTHOR = {Eckstein, Jonathan and Yao, Wang},
	TITLE = {Understanding the convergence of the alternating direction
	method of multipliers: theoretical and computational
	perspectives},
	JOURNAL = {Pac. J. Optim.},
	FJOURNAL = {Pacific Journal of Optimization. An International Journal},
	VOLUME = {11},
	YEAR = {2015},
	NUMBER = {4},
	PAGES = {619--644},
	ISSN = {1348-9151},
	MRCLASS = {90C25 (49M37 90C06)},
	MRNUMBER = {3420805},
	MRREVIEWER = {Wim van Ackooij},
}

@article {MR3592778,
	AUTHOR = {Chen, Liang and Sun, Defeng and Toh, Kim-Chuan},
	TITLE = {An efficient inexact symmetric {G}auss-{S}eidel based
	majorized {ADMM} for high-dimensional convex composite conic
	programming},
	JOURNAL = {Math. Program.},
	FJOURNAL = {Mathematical Programming},
	VOLUME = {161},
	YEAR = {2017},
	NUMBER = {1-2, Ser. A},
	PAGES = {237--270},
	ISSN = {0025-5610},
	MRCLASS = {90C25 (65K05 90C06 90C22)},
	MRNUMBER = {3592778},
	MRREVIEWER = {Min Li},
	DOI = {10.1007/s10107-016-1007-5},
	URL = {https://doi.org/10.1007/s10107-016-1007-5},
}
\end{document}